\newcommand{\leqnomode}{\tagsleft@true\let\veqno\@@leqno}
\newcommand{\reqnomode}{\tagsleft@false\let\veqno\@@eqno}
\setlist[enumerate]{leftmargin=*,align=left,labelindent=\parindent}
\newcommand{\myitem}[1][]{%
\item[#1]\protected@edef\@currentlabel{#1}\ignorespaces%
}
\newtheorem{theorem}{Theorem}[section]
\newtheorem{proposition}[theorem]{Proposition}
\newtheorem{lemma}[theorem]{Lemma}
\newtheorem{corollary}[theorem]{Corollary}
\theoremstyle{definition}
\newtheorem{definition}[theorem]{Definition}
\theoremstyle{remark}
\newtheorem{remark}[theorem]{Remark}
\newtheorem*{notation*}{Notation}
\newtheorem{notation}{Notation}
\newtheorem{question}[theorem]{Question}
\newcommand{\ha}{\mathsf{HA}}
\newcommand{\pa}{\mathsf{PA}}
\newcommand{\T}{{T}}
\newcommand{\DNE}[1]{{#1}\text{-}\mathrm{DNE}}
\newcommand{\LEM}[1]{{#1}\text{-}\mathrm{LEM}}
\newcommand{\DML}[1]{{#1}\text{-}\mathrm{DML}}
\newcommand{\DNS}[1]{{#1}\text{-}\mathrm{DNS}}
\newcommand{\NN}[1]{\neg \neg {#1}}
\newcommand{\PNFT}[2]{\mathrm{PNFT}_{T} \left( {#1}, {#2} \right) }
\newcommand{\F}{\mathrm{F}}
\newcommand{\U}{\mathrm{U}}
\newcommand{\E}{\mathrm{E}}
\newcommand{\FV}[1]{\mathrm{ FV} \left({#1}\right)}
\newcommand{\dn}[1]{{#1}^{\mathrm{dn}}}
\newcommand{\n}[1]{{#1}^{\mathrm{n}}}
\newcommand{\df}[1]{{#1}^{\mathrm{df}}}
\newcommand{\SN}[1]{{#1}_{N}}
\newcommand{\SNP}[1]{{\left( #1 \right)}_{N}}
\newcommand{\PH}{*}
\newcommand{\CTz}{\mathrm{CT_0}}
\newcommand{\ol}[1]{\overline{#1}}
\newcommand\ang[1]{{\langle #1 \rangle} }
\newcommand{\lr}{\leftrightarrow}
\newcommand{\llr}{\longleftrightarrow}
\newcommand{\lra}{\longrightarrow}
\newcommand{\vp}{\varphi}
\newcommand{\QF}[1]{{#1}_{\mathrm{qf}}}
\newcommand{\degree}[1]{\mathit{deg}(#1)}
\newcommand{\alt}[1]{\mathit{Alt}(#1)}
\DeclareSymbolFont{largesymbol}{OMX}{yhex}{m}{n}
\DeclareMathAccent{\Widehat}{\mathord}{largesymbol}{"62}
\title{Prenex normal form theorems in semi-classical arithmetic}
\author{Makoto Fujiwara\footnote{Email: makotofujiwara@meiji.ac.jp}
\footnote{School of Science and Technology, Meiji University, 1-1-1 Higashi-Mita, Tama-ku, Kawasaki-shi, Kanagawa 214-8571, Japan.}
and Taishi Kurahashi\footnote{Email: kurahashi@people.kobe-u.ac.jp}
\footnote{Graduate School of System Informatics,
Kobe University,
1-1 Rokkodai, Nada, Kobe 657-8501, Japan.}}
\date{\today}
\begin{document}
\maketitle

\begin{abstract}
%We revisit the prenex normal form theorem over semi-classical arithmetic developed in Akama et al. \cite{ABHK04}.
%We first provide a simple counterexample of their prenex normal form theorem, then modify it in an appropriate way.
%In addition, we show that our prenex normal form theorem is optimal from several perspectives.
Akama et al. \cite{ABHK04} systematically studied an arithmetical hierarchy of the law of excluded middle and related principles in the context of first-order arithmetic.
In that paper, they first provide a prenex normal form theorem
%for semi-classical arithmetical theories 
as a justification of their semi-classical principles restricted to prenex formulas.
%which they studied.
However, there are some errors in their proof.
In this paper, we provide a simple counterexample of their prenex normal form theorem \cite[Theorem 2.7]{ABHK04}, then modify it in an appropriate way.
In addition, we characterize several prenex normal form theorems with respect to semi-classical arithmetic.
%In particular, it follows from the investigation that our prenex normal form theorem is optimal.
%from several perspectives.  
%show that our prenex normal form theorem is optimal from several perspectives.
 \bigskip

%  \noindent\textsl{Keywords:} \TBR
  %Bar induction; Bar recursion; Continuity principle; Fan theorem; Intuitionistic mathematics; Constructive reverse mathematics\\[3pt]
 % \noindent\textsl{MSC2010:} \TBR
  %03F55; 03F35; 03F10; 03B30; 03B20
\end{abstract}

\section{Introduction}
\label{sec: Introduction} 
Prenex normal form theorem is one of the most basic theorems on theories based on classical first-order predicate logic.
In contrast, it does not hold for intuitionistic theories in general.
Therefore it does not make sense to consider an arithmetical hierarchy in an intuitionistic theory.
On the other hand, if one reasons in some semi-classical arithmetic which lies in-between classical arithmetic and  intuitionistic arithmetic, one can take an equivalent formula of the prenex normal form for any formula with low complexity.
%arithmetical complexity in the sense of classical arithmetic.
%In the context of intuitionistic first-order arithmetic, 
Akama et al.\cite{ABHK04} introduces the classes of formulas $\E_k$ and $\U_k$ which corresponds to the classes of classical $\Sigma_k$ and $\Pi_k$ formulas respectively, and showed that the former is equivalent to the class of formulas of $\Sigma_k$ form and the latter is so for $\Pi_k$ over some semi-classical arithmetic respectively.
This prenex normal form theorem justifies their investigation on the arithmetical hierarchy in the context of intuitionistic first-order arithmetic.
%such semi-classical arithmetic.
%Since formulas of the prenex normal form are usuful in syntactical investigations
Unfortunately, however, there are some crucial errors in their proof of the prenex normal form theorem \cite[Theorem 2.7]{ABHK04}.
In this paper, we revisit their formulation and modify their prenex normal form theorem in an appropriate way.

In Section \ref{sec: Preparation}, we recall the definitions and basic properties.
In Section \ref{sec: CE}, we provide a simple counterexample of \cite[Theorem 2.7]{ABHK04}.
In Section \ref{sec: PNFT}, we show the corrected version of the prenex normal form theorem (see Theorem \ref{thm: PNFT}).
In addition, we also present the simplified version of the prenex normal form theorem for formulas which do not contain the disjunction (see Theorem \ref{thm: PNFT for df-formulas}).
In Section \ref{sec: conservativity}, we carry out some generalization of a well-known result that classical arithmetic is $\Pi_2$-conservative over intuitionistic arithmetic with respect to semi-classical arithmetic.
In Section \ref{sec: Optimality}, using the generalized conservation result in Section \ref{sec: conservativity}, we characterize several prenex normal form theorems with respect to semi-classical arithmetic.
In particular, among other things, we show that for any theory $\T$ in-between intuitionistic arithmetic and classical arithmetic, $\T$ proves a semi-classical principle $\DNE{(\Pi_k \lor \Pi_k)}$ if and only if $\T$ satisfies the prenex normal form theorem for $ \U_{k'}$ and $\Pi_{k'}$ for all $k' \leq k$ (see Theorem \ref{thm: characterization of PNFT(Ukp,Pk)}).

%show that our prenex normal form theorem
%(Theorem \ref{thm: PNFT}) 
%is optimal from several perspectives.

 %\begin{notation}
%Without otherwise stated, the implications $\to$ or $\leftrightarrow$ in our (informally written) proofs denote derivations which are available in intuitionistic arithmetic.
%On the other hand, w
Throughout this paper, we work basically over intuitionistic arithmetic.
When we use some principle (including induction hypothesis [I.H.]) which is not available in intuitionistic arithmetic, it will be exhibited explicitly.
%we exhibit a logical principle (which is not provable in intuitionistic arithmetic) or induction hypothesis (I.H.)
%with the implications $\to$ or $\leftrightarrow$ explicitly
%when it is used in the derivation.
%indicate where additional logical principles or induction hypothesis (I.H.) are used in the derivations.
%\end{notation}
As regards basic reasoning over intuitionistic first-order logic, we refer the reader to see \cite[Section 6.2]{vD13}.

\section{Preparation}\label{sec: Preparation}
%In the sections except Section \ref{sec: PNFT without or}, 
Throughout this paper, we work with a standard formulation of intuitionistic arithmetic $\ha$ described e.g.~in \cite[Section 1.3]{Tro73}, which has function symbols for all primitive recursive functions.
%in the language.
%The classical variant $\pa$ is defined by adding the axiom scheme of the law of excluded middle to the axiom schemata of $\ha$.
We work in the language containing all the logical constants $\forall, \exists, \to, \land, \lor, \perp$.
%In Section \ref{sec: PNFT without or}, however, we also study in the language containing without the disjunction symbol $\lor$.
Let $\T$ denote a theory (e.g. $\ha$), and ${\rm P}$ and ${\rm Q}$ denote schemata (e.g. logical principles).
Then $\T + {\rm P}$ denotes the theory obtained from $\T$ by adding ${\rm P}$ into the axioms.
In particular, the classical variant $\pa$ is defined as $\ha + {\rm LEM}$, where ${\rm LEM}$ is the axiom scheme of the law of excluded middle.
We write $\T \vdash {\rm Q}$ (or $\T $ proves ${\rm Q}$) if any instance of ${\rm Q}$ is provable in $\T$.
 We write $\T \vdash {\rm P}+{\rm Q}$ if $\T \vdash {\rm P}$ and $\T \vdash {\rm Q}$.

\begin{notation}
For a formula $\vp$, $\FV{\vp}$ denotes the set of free variables in $\vp$.
Quantifier-free formulas are denoted with subscript ``qf'' as $\QF{\vp}$.
In addition, a list of variables is denoted with an over-line as $\ol{x}$.
In particular, a list of quantifiers of the same kind is denoted as $\exists \ol{x}$ and  $\forall \ol{x}$ respectively.
\end{notation}

\begin{definition}
\label{def: Sigma_k and Pi_k}
\noindent
The classes $\Sigma_k$ and $\Pi_k$ of formulas are defined as follows:
\begin{itemize}
\item
$\Sigma_0$, as well as $\Pi_0$, is the class of all quantifier-free formulas;
\item
$\Pi_{k+1}$ is the class of all formulas of form
$Q_1 \ol{x_1} \cdots Q_{k+1} \ol{x_{k+1}} \, \QF{\vp}$;
%(\ol{x_1}, \dots, \ol{x_{k+1}})$;
    \item 
$\Sigma_{k+1}$ is the class of all formulas of form
$Q'_1 \ol{x_1} \cdots Q'_{k+1} \ol{x_{k+1}} \, \QF{\vp}$;
%(\ol{x_1}, \dots, \ol{x_{k+1}})$;
\end{itemize}
where
$Q_i$ represents $\forall$ for odd $i$ and $\exists$ for even $i$
and
$Q'_i$ represents $\exists$ for odd $i$ and $\forall$ for even $i$.
Following \cite{ABHK04}, we define the classes $\Sigma_k$ and $\Pi_k$ in the non-cumulative manner (namely, each $Q_i \ol{x_i} $ and $Q'_i \ol{x_i}$ must not be empty).
A formula $\vp$ is of {\bf prenex normal form} if $\vp \in \Sigma_k \cup \Pi_k$ for some $k$. 
\end{definition}
\noindent
%Note that 

\begin{remark}
\label{rem: sigma_k and pi_k}
%Without loss of generality
Since the list of variables can be contracted into one variable in $\ha$ by using a fixed primitive recursive pairing function (see e.g. \cite[1.3.9]{Tro73}), one may assume that for each natural number $k>0$, a formula in $\Sigma_k$ is of form $\exists x \vp(x)$ with some $\vp(x) \in \Pi_{k-1}$ and a formula in $\Pi_k$ is of form $\forall x \psi(x)$ with some $\psi(x) \in \Sigma_{k-1}$ without loss of generality.
\end{remark}

\begin{lemma}
\label{lem: sigma_k and pi_k}
Let $k$ be a natural number.
Let $\vp$ be in $\Pi_k$ and $\psi$ be in $\Sigma_k$.
Then, for all natural numbers $i,j$, there exist $\vp', \psi'\in \Pi_{k+i}$ and $\vp'', \psi'' \in \Sigma_{k+j}$ such that $\FV{\vp}=\FV{\vp'}=\FV{\vp''}$, $\FV{\psi}=\FV{\psi'}=\FV{\psi''}$,
$
\ha \vdash  \vp \lr \vp' \lr \vp''   $
and $\ha \vdash \psi \lr \psi' \lr \psi''$.
\end{lemma}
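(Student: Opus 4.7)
The plan is to prove all four equivalences by iterating two elementary padding operations based on the intuitionistic equivalences $\QF{\theta} \lr \forall y\, \QF{\theta} \lr \exists y\, \QF{\theta}$ for fresh $y \notin \FV{\QF{\theta}}$, which hold in $\ha$ because any term (say $0$) both witnesses the existential and instantiates the universal.

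The first operation is \emph{inner padding}: given $\chi = Q_1 \ol{x_1} \cdots Q_\ell \ol{x_\ell}\, \QF{\theta}$ in $\Pi_\ell$ (resp.~$\Sigma_\ell$) and a fresh variable $y$, I form $Q_1 \ol{x_1} \cdots Q_\ell \ol{x_\ell}\, Q_{\ell+1} y\, \QF{\theta}$ with $Q_{\ell+1}$ of kind opposite to $Q_\ell$; the result lies in $\Pi_{\ell+1}$ (resp.~$\Sigma_{\ell+1}$), has the same free variables as $\chi$, and is $\ha$-equivalent to $\chi$ by propagating the basic equivalence above through the outer quantifier blocks. The second operation is \emph{outer padding}: it sends $\vp \in \Pi_k$ to $\exists y\, \vp \in \Sigma_{k+1}$ and dually $\psi \in \Sigma_k$ to $\forall y\, \psi \in \Pi_{k+1}$ (with fresh $y$); class membership is immediate since the prepended quantifier alternates with the original outermost block, and the equivalence is again the basic one.

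Combining these, I would proceed by induction on $i$ and $j$. Iterating inner padding $i$ times on $\vp \in \Pi_k$ yields the desired $\vp' \in \Pi_{k+i}$; applying outer padding once followed by $j-1$ iterations of inner padding yields $\vp'' \in \Sigma_{k+j}$. The argument for $\psi \in \Sigma_k$ is entirely symmetric (outer padding to $\Pi_{k+1}$, then inner padding to reach $\Pi_{k+i}$; inner padding alone to reach $\Sigma_{k+j}$). The only point needing care will be the parity bookkeeping that guarantees the constructed formulas fall into the correct non-cumulative classes $\Pi_{k+i}$ and $\Sigma_{k+j}$; this is the sole and routine obstacle, and no semi-classical principle is invoked since the introduced variables are all bound and the underlying equivalences are intuitionistically valid.
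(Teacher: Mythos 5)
Your proposal is correct and follows essentially the same route as the paper: the paper's one-line proof rests on exactly the fact you use, namely $\ha \vdash \xi \lr \forall z\, \xi \lr \exists z\, \xi$ for fresh $z$, applied as vacuous-quantifier padding. Your inner/outer padding bookkeeping (and the induction on $i,j$, with the crossover case requiring one outer pad, i.e.\ $i,j\geq 1$ there) just spells out the "straightforward" details the paper leaves implicit.
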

\begin{proof}
Straightforward by the fact that
\begin{equation*}
\label{eq: adding quantifier}
\ha \vdash \xi \lr \forall z \xi  \lr \exists z \xi
\end{equation*}
for any $ z\notin \FV{\xi}$.
\end{proof}

\begin{definition}
For a class $\Gamma$ of formulas,
$\Gamma (\ol{x})$ denotes the class of formulas $\vp$ in $\Gamma$ such that $\FV{\vp} \subseteq \{ \ol{x}\}$.
\end{definition}

\begin{remark}
\label{rem: identification on Sigma_k and Pi_k}
In the light of Lemma \ref{lem: sigma_k and pi_k}, throughout this paper, we identify the classes $\Sigma_k$ and $\Pi_k$ with the classes defined as in Definition \ref{def: Sigma_k and Pi_k} with allowing the quantifiers $Q_i$ and $Q'_i$ to be empty.
Under this identification, for all $k$ and $k'$ such that $k< k' $, $\Pi_k (\ol{x}) $ and $\Sigma_k (\ol{x})$ are considered to be sub-classes of $ \Sigma_{k'} (\ol{x}) \cap \Pi_{k'} (\ol{x})$.
We frequently use this property in what follows.
\end{remark}

%In the following, w
Recall the logical principles from \cite{ABHK04} and related principles:
 \begin{definition}\label{def: LP}
Let $\Gamma$ and $\Gamma'$ be classes of formulas.
%Let $\Gamma (\ol{x})$ be a class of formulas $\vp$ in $\Gamma$ such that $\FV{\vp} \subseteq \{ \ol{x}\}$.
%which contains free variable  
\noindent
\begin{itemize}
    \item 
$\LEM{\Gamma}:\,  \forall x \left( \vp (x) \lor \neg \vp (x) \right)$ where $\vp (x)
\in \Gamma(x)$.
\item
$\DML{\Gamma}:\,  \forall x \left( \neg (\vp (x) \land \psi (x)) \to \neg \vp (x) \lor \neg \psi (x) \right)$ where $\vp (x), \psi (x) \in \Gamma(x)$.
    \item 
$\DNE{\Gamma}:\,  \forall x \left( \neg \neg \vp (x) \to \vp (x) \right) $ where $\vp (x) \in \Gamma(x)$.
    \item 
$\DNE{(\Gamma \lor \Gamma')}:\,  \forall x \left( \neg \neg (\vp (x) \lor \psi (x)) \to \vp (x) \lor \psi (x) \right)$ where $\vp (x)  \in \Gamma(x)$ and $\psi(x) \in \Gamma'(x)$.
\item
$\DNS{\Gamma}: \, \forall x \left(\forall  y \neg \neg \vp(x,y ) \to \neg \neg \forall y \vp(x,y) \right)$ where $\vp(x,y ) \in \Gamma (x,y)$.
\item
Let ${\rm P} \in \{\LEM{\Gamma}, \DML{\Gamma}, \DNE{\Gamma}, \DNE{(\Gamma \lor \Gamma')} , \DNS{\Gamma}\}$.\\          
$\NN{{\rm P}}:\,  \neg \neg \xi$ where $\xi$ is an instance of ${\rm P} $.
%which may contain $x$ free.
%\item
\end{itemize}
 \end{definition}
 \noindent
 Note that our logical principles are equivalent also to those defined with lists of quantifiers of the same kind (cf. Remark \ref{rem: sigma_k and pi_k}).
 
\begin{remark}\label{rem: On formulations}
One has to care about the formulation of the double negated variants.
That is, one has to take the double negations of the universal closure of the original logical principles as in Definition \ref{def: LP}.
The double negated variants defined as such are not provable in $\ha$, which has been overlooked in the proof of \cite[Theorem 2.7]{ABHK04} (see also Section \ref{sec: CE}).
In fact, one may think of the double negated versions as variants of the double negation shift principle (see \cite{FK18}).
In addition, our double negated versions are equivalent to (the universal closures of) those with allowing free variables (cf. \cite[Remark 2.5]{FK18}).
\end{remark}

\begin{remark}
\label{rem: NNDNS <-> DNS}
For any class $\Gamma$ of formulas, 
$\DNS{\Gamma} $ is intuitionistically equivalent to $\NN{\DNS{\Gamma} }$ since
$$
\begin{array}{cl}
     & \forall x \left(  \forall y \neg \neg \vp  \to  \neg \neg   \forall y \vp  \right)  \\
 \llr    & \forall x \neg \neg \left(  \forall y \neg \neg \vp  \to   \neg \neg \forall y \vp  \right) \\
  \llr    & \neg \neg  \forall x \neg \neg \left(  \forall y \neg \neg \vp  \to   \neg \neg \forall y \vp  \right) \\
    \llr  &\neg \neg \forall x  \left(  \forall y \neg \neg \vp  \to  \neg \neg  \forall y \vp  \right).\\
\end{array}
$$
 \end{remark}
 
Next we reformulate the classes of formulas studied in \cite{ABHK04}.
The classes $\F_k, \U_k$ and $\E_k$ in Definition \ref{def: Classes} below were  dealt with in \cite{ABHK04} informally.
Here we shall introduce them and two additional classes $\U_k^+$ and $\E_k^+$ in a formal manner.
\begin{definition}\label{def: alp}
An alternation path is a finite sequence of $+$ and $-$ in which $+$ and $-$ appear alternatively.
For an alternation path $s$, let $i(s)$ denote the first symbol of $s$ if $s \not \equiv \ang{\, }$ (empty sequence); $ \times$ if $s \equiv \ang{\, }$.
%, 
Let $s^{\perp}$ denote an alternation path which is obtained by switching $+$ and $-$ in $s$, and let $l(s) $ denote the length of $s$.
\end{definition}

\begin{definition}\label{def: ALT(vp)}
For a formula $\vp$, the set of alternation paths $\alt{\vp}$ of $\vp$ is defined as follows:
\begin{itemize}
    \item 
    If $\vp$ is quantifier-free, then $\alt{\vp} := \{ \ang{\, } \}$;
    \item
    Otherwise, $\alt{\vp}$ is defined inductively by the following rule:
    \begin{itemize}
        \item 
        If $\vp \equiv \neg \vp_1$, then $\alt{\vp} := \{ s^{\perp} \mid s\in \alt{\vp_1}\}$;
        \item
        If $\vp \equiv \vp_1 \land \vp_2$ or $\vp \equiv \vp_1 \lor \vp_2$, then $\alt{\vp} := \alt{\vp_1} \cup \alt{\vp_2}$;
        \item
        If $\vp \equiv \vp_1 \to \vp_2$, then $\alt{\vp} := \{ s^{\perp} \mid s \in \alt{\vp_1}\} \cup \alt{\vp_2}$;
\item
If $\vp \equiv \forall x \vp_1 $, then $\alt{\vp} :=\{s \mid s\in \alt{\vp_1} \text{ and } i(s)\equiv -\} \cup \{-s \mid s\in \alt{\vp_1} \text{ and } i(s)\not \equiv - \} $;
\item
If $\vp \equiv \exists x \vp_1 $, then $\alt{\vp} :=\{s \mid s\in \alt{\vp_1} \text{ and } i(s)\equiv + \} \cup \{+s \mid s\in \alt{\vp_1} \text{ and } i(s)\not \equiv + \} $.
    \end{itemize}
\end{itemize}
In addition, for a formula $\vp$, the degree $\degree{\vp}$ of $\vp$ is defined as 
$$\degree{\vp} := \max \{l(s) \mid s \in \alt{\vp}  \} .$$
%\begin{itemize}
%    \item 
 %   If $\vp$ is quantifier-free, then $\degree{\vp} := 0$; 
 %   \item
%Otherwise, $\degree{\vp} := \max \{l(s) \mid s \in \alt{\vp}  \}$.
%\end{itemize}
\end{definition}

\begin{definition}
\label{def: Classes}
The classes $\F_k, \U_k, \E_k $ (from \cite[Definition 2.4]{ABHK04}), $\U_k^+ $ and $ \E_k^+ $ of formulas are defined as follows:
%in Definition 2.4 of Kurahashi-san's note (based on \cite[Definition 2.4]{ABHK04}):
\begin{itemize}
    \item
    $\F_k := \{ \vp \mid \degree{\vp}=k  \}    $;
    \item
    $\U_0:=\E_0:=\F_0$;
    \item
$\U_{k+1} := \{ \vp \in \F_{k+1} \mid i(s) \equiv - \text{ for all }s\in \alt{\vp} \text{ such that }l(s) =k+1 \}$;
\item
$\E_{k+1} := \{ \vp \in \F_{k+1} \mid i(s) \equiv + \text{ for all }s\in \alt{\vp} \text{ such that }l(s) =k+1  \}$;
%\item
%$\F_k^+ := \{ \vp \mid \degree{\vp}\leq k  \}    $;
    \item
$\begin{displaystyle}
\U_k^+ := \U_k \cup \bigcup_{i<k} \F_i
\end{displaystyle}
$;
    \item
$\begin{displaystyle}
\E_k^+ := \E_k \cup \bigcup_{i<k} \F_i
\end{displaystyle}
$. 
\end{itemize}
\end{definition}
\begin{remark}
\label{rem: Ekp and Ukp}
A similar property as Lemma \ref{lem: sigma_k and pi_k} also holds for $\U_k^+$ and $\E_k^+$:
for any $\vp \in \U_k^+$ and $\psi\in \E_k^+$, there exist $\vp'\in \U_k$ and $\psi' \in \E_k$ such that $\FV{\vp}=\FV{\vp'}$, $\FV{\psi}=\FV{\psi'}$, $\ha \vdash \vp \lr \vp'$ and $\ha \vdash \psi \lr \psi'$.
\end{remark}

Note $\F_0 =\Sigma_0 =\Pi_0$.
For each formula  $\vp \in \E_k$ (resp. $\psi \in \U_k$) of $\pa$,
%by prenex normal form theorem, 
one can take a formula $\vp' \in \Sigma_k$ (resp. $\psi' \in \Pi_k$) of $\pa$ which is equivalent to $\vp$ (resp. $\psi$) over $\pa$.
On the other hand, this is not the case for $\ha$.
In what follows, we study what kind of semi-classical arithmetic in-between $\pa$ and $\ha$ captures this property for each $k$.
%Note that the classes $\U_k^+$ and $\E_k^+$ are classically identical with the classes $\Pi_k$ and $\Sigma_k$.
%Our prenex normal form theorem (Theorem \ref{thm: PNFT}) below justifies this.
%\end{remark}
In fact, Akama et al. \cite{ABHK04} has already undertaken this.
%sort of work.
In particular, \cite[Theorem 2.7]{ABHK04} asserts the following:
\begin{enumerate}
    \item 
     For any $\varphi \in \E_k$,
        there exists $\varphi' \in \Sigma_k$  such that %$\FV{\vp}=\FV{\vp'}$ and
    $$
    \ha  + \DNE{\Sigma_k} \vdash \varphi \leftrightarrow \varphi'.
$$
\item
 For any $\varphi \in \U_k$,
        there exists $\varphi' \in \Pi_k$  such that %$\FV{\vp}=\FV{\vp'}$ and
    $$
    \ha  + \DNE{(\Pi_k \lor \Pi_k)} \vdash \varphi \leftrightarrow \varphi'.
$$
\end{enumerate}
However, the first assertion is wrong as we show in Section \ref{sec: CE}.

\section{A counter example}
\label{sec: CE}
 Recall that \cite[Theorem 2.7]{ABHK04} asserts that
 for any $\varphi \in \E_k$,
        there exists $\varphi' \in \Sigma_k$  such that %$\FV{\vp}=\FV{\vp'}$ and
    $$
    \ha  + \DNE{\Sigma_k} \vdash \varphi \leftrightarrow \varphi'.
$$
%As already mentioned in Section \ref{sec: Introduction}, 
However, there are some errors in the proof.
%of \cite[Theorem 2.7]{ABHK04}.
%the prenex normal form theorem 
In particular, in \cite[page 5, lines 15-17]{ABHK04}, it is written that
``
Since the double negations of ${\rm DNE}$ is intuitionistically provable, $\vdash_{\ha} \NN{A_0} \lr \NN{\exists x_0. C_0 }$ (which means $\ha \vdash  \NN{A_0} \lr \NN{\exists x_0 C_0 }$ in our notation)
''.
As studied in \cite{FK18}, however, the double negations of (the universal closure of) ${\rm DNE}$ is not provable in $\ha$, and hence, their proof actually uses some double negated logical principles in the sense of Definition \ref{def: LP}.
%formalized in an appropriate way (see Definition \ref{def: LP} and Remark \ref{rem: On formulations}) 
Our counterexample below
%(Proposition \ref{prop: counterexample} below)
shows that such a use of some additional principle is unavoidable.

Recall the arithmetical form of Church's thesis from \cite[3.2.14]{Tro73}:
$$
\CTz: \forall x \exists y \,\vp (x,y) \to \exists e \forall x \exists v \left({\rm T}(e,x,v) \land \vp (x, {\rm U}(v))  \right),
$$
where
${\rm T}$ and ${\rm U}$ are the standard primitive recursive predicate and function from the Kleene normal form theorem.
Note that $\CTz$ is a sort of combination of so-called  Church's thesis stating that every function is recursive and the countable choice principle (see \cite[4.3.2]{ConstMathI}).

\begin{proposition}
\label{prop: counterexample}
The following sentence
$$
\varphi_0:\equiv \neg \forall x \left( \neg  \exists u \left( {\rm T}(x,x,u) \land {\rm U}(u)=0 \right) \lor \neg  \exists u \left( {\rm T}(x,x,u) \land {\rm U}(u)\neq 0 \right) \right)
$$
%in $  \E_1$,
is not equivalent to any sentence $\varphi_0 ' \in \Sigma_1 $ over $\ha +\DNE{\Sigma_1}$.
\end{proposition}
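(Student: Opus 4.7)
The plan is to argue by contradiction, exploiting that $\varphi_0$ is classically refutable yet intuitionistically provable from Church's thesis $\CTz$. Suppose, toward a contradiction, that there is a sentence $\varphi_0'\in\Sigma_1$ with $\ha+\DNE{\Sigma_1}\vdash\varphi_0\lr\varphi_0'$, and abbreviate $A(x):\equiv\exists u({\rm T}(x,x,u)\land{\rm U}(u)=0)$ and $B(x):\equiv\exists u({\rm T}(x,x,u)\land{\rm U}(u)\neq 0)$, so that $\varphi_0\equiv\neg\forall x(\neg A(x)\lor\neg B(x))$.

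For the classical side, the functionality of the Kleene ${\rm T}$-predicate already yields $\ha\vdash\forall x\,\neg(A(x)\land B(x))$; classical de Morgan then upgrades this to $\pa\vdash\forall x(\neg A(x)\lor\neg B(x))$, i.e.\ $\pa\vdash\neg\varphi_0$. Since $\ha+\DNE{\Sigma_1}\subseteq\pa$, the assumed equivalence gives $\pa\vdash\neg\varphi_0'$, and the $\Sigma_1$-soundness of $\pa$ then yields $\mathbb{N}\not\models\varphi_0'$.

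For the constructive side, I would verify that $\ha+\CTz\vdash\varphi_0$. Reasoning in $\ha+\CTz$, assume $\forall x(\neg A(x)\lor\neg B(x))$ and set $\psi(x,y):\equiv(y=0\to\neg A(x))\land(y\neq 0\to\neg B(x))$. The assumption provides $\forall x\exists y\,\psi(x,y)$ (choose $y=0$ on the first disjunct and $y=1$ on the second), so $\CTz$ supplies an index $e$ with $\forall x\exists v({\rm T}(e,x,v)\land\psi(x,{\rm U}(v)))$. Instantiating at $x=e$ produces such a $v$; decidable case analysis on whether ${\rm U}(v)=0$ then immediately conflicts with $\psi(e,{\rm U}(v))$, since the very $v$ at hand witnesses $A(e)$ in the first case and $B(e)$ in the second against the corresponding $\neg A(e)$ or $\neg B(e)$ provided by $\psi$. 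The resulting contradiction establishes $\varphi_0$.

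Putting the two halves together, $\ha+\CTz+\DNE{\Sigma_1}$ proves $\varphi_0$, and hence $\varphi_0'$ by the assumed equivalence. The principal obstacle is then to transfer this provability into truth in $\mathbb{N}$, which would contradict the classical half. To bridge the gap I plan to invoke the $\Sigma_1$-soundness of $\ha+\CTz+\MP$ (noting that $\DNE{\Sigma_1}$ coincides with $\MP$), a standard metamathematical fact obtainable via number (or $q$-)realizability together with Friedman's $A$-translation to absorb $\MP$; this closes the contradiction.
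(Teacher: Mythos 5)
Your proposal is correct and follows essentially the same route as the paper: the identical $\CTz$-diagonalization showing $\ha+\CTz\vdash\varphi_0$, combined with the same realizability fact used to strip $\CTz$ (and $\MP=\DNE{\Sigma_1}$) from the $\Sigma_1$ consequence $\varphi_0'$, against the classical refutability of $\varphi_0$. The only difference is in the bookkeeping at the end: you close the contradiction semantically via $\Sigma_1$-soundness of $\pa$ and of $\ha+\CTz+\MP$ (Friedman's A-translation is not actually needed for that), whereas the paper stays proof-theoretic, deriving $\ha+\DNE{\Sigma_1}+\DML{\Sigma_1}\vdash\perp$ and hence the inconsistency of $\pa$.
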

\begin{proof}
We first claim that $\ha + \CTz$ proves $\varphi_0$.
For the sake of contradiction, assume 
\begin{equation}\label{eq: nonrec}
\forall x \left( \neg \exists u \left( {\rm T}(x,x,u) \land {\rm U}(u)=0 \right) \lor \neg \exists u \left( {\rm T}(x,x,u) \land {\rm U}(u)\neq 0 \right) \right)
\end{equation}
and reason in $\ha + \CTz$.
Since
$\vp_1\lor \vp_2 \lr \exists k\left( (k=0 \to \vp_1) \land (k\neq 0 \to \vp_2) \right)$ (see \cite[1.3.7]{Tro73}),
by $\CTz$, there exists $e$ such that
$$
\forall x \exists v
\left(
\begin{array}{l}
{\rm T}(e,x,v) \\
\land  \left({\rm U}(v)=0 \to \neg \exists u \left( {\rm T}(x,x,u) \land {\rm U}(u)=0 \right) \right) \\
\land  \left({\rm U}(v)\neq 0 \to \neg \exists u \left( {\rm T}(x,x,u) \land {\rm U}(u) \neq 0 \right) \right)
\end{array}
\right).
$$
In particular, for that $e$, there exists $v_e$ such that ${\rm T}(e,e,v_e)$,
$$
{\rm U}(v_e)=0 \to \neg \exists u \left( {\rm T}(e,e,u) \land {\rm U}(u)=0 \right)
$$
and 
$$
 {\rm U}(v_e)\neq 0 \to \neg \exists u \left( {\rm T}(e,e,u) \land {\rm U}(u) \neq 0 \right).
 $$
Since  ${\rm U}(v_e)=0 \lor {\rm U}(v_e) \neq 0$, we obtain a contradiction straightforwardly.

%Since $\varphi_0 $ is in $\E_1^+$, 

If $\varphi_0 $ is equivalent to some sentence $\varphi_0 ' \in \Sigma_1 $ over $\ha +\DNE{\Sigma_1}$, we have
$\ha + \DNE{\Sigma_1} +\CTz \vdash \varphi_0 '$
from the above claim.
%Since $\varphi_0 ' $ is almost negative (see \cite[3.2.9]{Tro73}),
Since $\varphi_0' \in \Sigma_1$, by the soundness of Kleene realizability  (see \cite[3.2.22]{Tro73}), we have that
$$\ha + \DNE{\Sigma_1}  \vdash \varphi_0 ',$$
and hence, $\ha + \DNE{\Sigma_1}  \vdash \varphi_0 $.
On the other hand, since
$$\forall x \left( 
\neg \left(
 \exists u \left( {\rm T}(x,x,u) \land {\rm U}(u)=0 \right) \land  \exists u \left( {\rm T}(x,x,u) \land {\rm U}(u)\neq 0 \right)
\right) \right)
$$
is provable in $\ha$, we have
%it is straightforward to see that
$\ha +\DML{\Sigma_1} \vdash \eqref{eq: nonrec}$.
Therefore we have
$$
\ha + \DNE{\Sigma_1} + \DML{\Sigma_1} \vdash \perp,$$
and hence, $\pa \vdash \perp$, which is a contradiction. 
\end{proof}

\begin{remark}
%On the other hand, o
One can easily see that $\vp_0 $ in Proposition \ref{prop: counterexample} is in $\E_1$.
Thus Proposition \ref{prop: counterexample} shows that $\vp_0 $ is a counterexample of \cite[Theorem 2.7]{ABHK04} for $k=1$.
\end{remark}

\section{Basic lemmata}
In this section, we show several lemmata which we use in the proofs of our prenex normal form theorems.
\begin{lemma}
\label{lem: HA+P |- vp => HA+NNP |- NNvp}
For any logical principle ${\rm P}$ in Definition \ref{def: LP} and any formula $\vp$ (possibly containing free variables), if $\ha + {\rm P} \vdash \vp$, then $\ha + {\rm \NN{P}} \vdash \NN{\vp}$.
\end{lemma}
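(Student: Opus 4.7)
My plan is to reduce the hypothesis $\ha + {\rm P} \vdash \vp$ to a single $\ha$-implication via the finiteness of first-order proofs, and then propagate the double negation across this implication by standard intuitionistic manipulations.

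First I would use the fact that any derivation in $\ha + {\rm P}$ invokes only finitely many instances of ${\rm P}$: there exist instances $\xi_1, \ldots, \xi_n$ of ${\rm P}$ such that $\ha \vdash \bigwedge_{i=1}^{n} \xi_i \to \vp$. By the formulations in Definition \ref{def: LP}, each $\xi_i$ is closed, being the outermost universal closure of a schematic instance.

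Next I would push $\neg\neg$ inward using the two intuitionistic schemata
\[
\neg\neg(A \to B) \to (\neg\neg A \to \neg\neg B) \quad \text{and} \quad \neg\neg(A \land B) \lr \neg\neg A \land \neg\neg B,
\]
which combined with the implication just obtained yield
\[
\ha \vdash \bigwedge_{i=1}^{n} \neg\neg \xi_i \to \neg\neg \vp.
\]
By Definition \ref{def: LP}, each $\neg\neg \xi_i$ is an instance of $\NN{{\rm P}}$, and hence $\ha + \NN{{\rm P}} \vdash \NN{\vp}$, as required.

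I do not expect any real obstacle. The only point requiring attention is that instances of $\NN{{\rm P}}$ in Definition \ref{def: LP} are taken to be $\neg\neg$ of universal closures of instances of ${\rm P}$ (as emphasized in Remark \ref{rem: On formulations}), so the match between $\neg\neg \xi_i$ and an instance of $\NN{{\rm P}}$ is immediate; this is precisely the subtlety highlighted by the authors when they criticize \cite[Theorem 2.7]{ABHK04}. Free variables in $\vp$ cause no difficulty, since they are carried as parameters through the derivation and both intuitionistic distribution laws used above are schematic.
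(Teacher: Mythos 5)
Your proposal is correct and follows essentially the same route as the paper: extract finitely many (closed) instances of ${\rm P}$, obtain the single implication $\bigwedge_i \xi_i \to \vp$ in $\ha$ via the deduction theorem, and then distribute the double negation over $\to$ and $\land$ intuitionistically to conclude $\ha + \NN{{\rm P}} \vdash \NN{\vp}$. The only unstated micro-step is first passing from the implication to its double negation (an instance of $A \to \neg\neg A$), which is immediate.
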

\begin{proof}
Assume $\ha + {\rm P} \vdash \vp$.
Then there exists finite instances $\psi_1, \dots, \psi_k$ of ${\rm P}$ such that
$\ha + \psi_1 + \dots + \psi_k \vdash \vp$.
%If $\ha + {\rm P} \vdash \vp$, s
Since $\ha$ satisfies the deduction theorem, we have that $\ha$ proves $\psi_1 \land \dots \land \psi_k \to \vp$, and hence, $\NN{\left(\psi_1 \land \dots \land \psi_k \to \vp\right)}$, which is equivalent to $\NN{\psi_1} \land \dots \land \NN{\psi_k} \to \NN{\vp}$.
%, which is equivalent to $\NN{\psi_k} \to \NN{\vp}$.
Then we have $\ha + {\rm \NN{P}} \vdash \NN{\vp}$.
\end{proof}

\begin{corollary}
\label{cor: HA+P |- vp_1 <-> vp_2 => HA+NNP |- NNvp_1 <-> NNvp_2}
For any logical principle ${\rm P}$ in Definition \ref{def: LP} and any formulas $\vp_1$ and $\vp_2$ (possibly containing free variables), if $\ha + {\rm P} \vdash \vp_1 \lr \vp_2$, then $\ha + {\rm \NN{P}} \vdash \NN{\vp_1} \lr \NN{\vp_2}$.
\end{corollary}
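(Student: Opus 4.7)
The plan is to reduce the corollary to the preceding lemma by splitting the biconditional $\vp_1 \lr \vp_2$ into its two implications and then using a standard intuitionistic fact about double negation and implication.

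First, since $\ha + {\rm P} \vdash \vp_1 \lr \vp_2$, we obtain $\ha + {\rm P} \vdash \vp_1 \to \vp_2$ and $\ha + {\rm P} \vdash \vp_2 \to \vp_1$ by intuitionistic reasoning. Applying Lemma \ref{lem: HA+P |- vp => HA+NNP |- NNvp} to each of these implications (viewed as formulas, possibly with free variables), we get
$$
\ha + \NN{{\rm P}} \vdash \NN{(\vp_1 \to \vp_2)}, \qquad \ha + \NN{{\rm P}} \vdash \NN{(\vp_2 \to \vp_1)}.
$$

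Next, the key intuitionistic tautology
$$
\neg \neg (\psi \to \chi) \to (\neg \neg \psi \to \neg \neg \chi)
$$
is provable purely in intuitionistic predicate logic (a routine verification: assume $\neg \neg (\psi \to \chi)$ and $\neg \neg \psi$; to show $\neg \neg \chi$, assume $\neg \chi$, from which $\neg \psi$ follows via $\psi \to \chi$, contradicting $\neg \neg \psi$). Applying this to each direction yields $\ha + \NN{{\rm P}} \vdash \NN{\vp_1} \to \NN{\vp_2}$ and $\ha + \NN{{\rm P}} \vdash \NN{\vp_2} \to \NN{\vp_1}$, hence $\ha + \NN{{\rm P}} \vdash \NN{\vp_1} \lr \NN{\vp_2}$.

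There is no real obstacle here; the corollary is essentially a packaging of Lemma \ref{lem: HA+P |- vp => HA+NNP |- NNvp} with the elementary distribution of double negation across implication. The only care needed is to make sure that the direction-splitting and the reassembly are carried out within intuitionistic logic, which is indeed the case.
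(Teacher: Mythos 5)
Your proof is correct and is essentially the paper's argument: the paper applies Lemma \ref{lem: HA+P |- vp => HA+NNP |- NNvp} once to the biconditional $\vp_1 \lr \vp_2$ and then invokes the intuitionistic equivalence of $\NN{(\vp_1 \lr \vp_2)}$ with $\NN{\vp_1} \lr \NN{\vp_2}$. Your variant, splitting into the two implications, applying the lemma twice, and distributing $\neg\neg$ over $\to$, is just a minor repackaging of the same reduction.
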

\begin{proof}
Immediate from Lemma \ref{lem: HA+P |- vp => HA+NNP |- NNvp} and the fact that $\NN{\left(\vp_1 \lr \vp_2\right)}$ is intuitionistically equivalent to $\NN{\vp_1} \lr \NN{\vp_2}$.
\end{proof}

\begin{lemma}
\label{lem: basic facts on arithmetical hierarchy} 
Let $k$ be a natural number.
Let $\vp_1$ and $\vp_2$ be formulas in $\Sigma_k$, and let $\vp_3$ and $\vp_4$ be formulas in $\Pi_k$.
Then the following hold:
\begin{enumerate}
    \item
        \label{item: Sigma land}
     There exists a formula $\vp \in \Sigma_k$ such that $\FV{\vp}= \FV{\vp_1}\cup \FV{\vp_2} $ and $\ha \vdash \vp \lr  \vp_1 \land \vp_2$;
% which is equivalent to $\vp_1 \land \vp_2$ over $\ha$;
    \item
    \label{item: Pi land}
     There exists a formula $\vp' \in \Pi_k$ such that $\FV{\vp'}= \FV{\vp_3}\cup \FV{\vp_4} $ and $\ha \vdash \vp' \lr  \vp_3 \land \vp_4$.
%    There exists a formula $\vp \in \Pi_k$  which is equivalent to $\vp_3 \land \vp_4$ over $\ha$.
%        \item
 %there exists a $\Pi_k$-formula (resp. $\Sigma_k$-formula) $\vp$  which is equivalent to $\vp_1 \land \vp_2$ 
\end{enumerate}
\end{lemma}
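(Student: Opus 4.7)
The plan is to prove the two parts simultaneously by induction on $k$, taking advantage of Remark \ref{rem: sigma_k and pi_k} to normalize every $\Sigma_k$ formula as $\exists x\,\chi(x)$ with $\chi \in \Pi_{k-1}$ and every $\Pi_k$ formula as $\forall y\,\chi(y)$ with $\chi \in \Sigma_{k-1}$. Since the two parts feed into each other (closure of $\Sigma_{k+1}$ under conjunction needs closure of $\Pi_k$, and vice versa), the simultaneous form of the induction is essential.

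For the base case $k=0$, both $\vp_1 \land \vp_2$ and $\vp_3 \land \vp_4$ are themselves quantifier-free, so we may simply take $\vp :\equiv \vp_1 \land \vp_2$ and $\vp' :\equiv \vp_3 \land \vp_4$ in $\F_0 = \Sigma_0 = \Pi_0$. For the inductive step from $k$ to $k+1$, consider the $\Sigma_{k+1}$ case (the $\Pi_{k+1}$ case is dual). Using Remark \ref{rem: sigma_k and pi_k}, write $\vp_i \equiv \exists x_i\,\chi_i(x_i)$ with $\chi_i \in \Pi_k$, renaming the bound variables $x_1,x_2$ (by $\alpha$-equivalence in $\ha$) so that they are fresh and distinct from $\FV{\vp_1}\cup\FV{\vp_2}$. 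Then, by standard intuitionistic predicate logic,
\[
\ha \vdash \vp_1 \land \vp_2 \lr \exists x_1\,\exists x_2\,\bigl(\chi_1(x_1) \land \chi_2(x_2)\bigr).
\]
By the induction hypothesis applied to the $\Pi_k$ case, there is $\chi \in \Pi_k$ with $\FV{\chi} = \FV{\chi_1(x_1)} \cup \FV{\chi_2(x_2)}$ and $\ha \vdash \chi \lr \chi_1(x_1) \land \chi_2(x_2)$. Finally, contract the two existential quantifiers into one using the primitive recursive pairing function $j$ with projections $j_1,j_2$ (the same device used in Remark \ref{rem: sigma_k and pi_k}): setting $\vp :\equiv \exists x\, \chi[x_1 \mapsto j_1(x),\, x_2 \mapsto j_2(x)]$, we obtain a formula in $\Sigma_{k+1}$ with $\FV{\vp} = \FV{\vp_1} \cup \FV{\vp_2}$ and $\ha \vdash \vp \lr \vp_1 \land \vp_2$, since substitution of terms for variables preserves membership in $\Pi_k$.

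The $\Pi_{k+1}$ case is entirely symmetric: write $\vp_i \equiv \forall y_i\,\chi_i(y_i)$ with $\chi_i \in \Sigma_k$, pull the two universal quantifiers out, invoke the induction hypothesis for $\Sigma_k$ to produce a single $\Sigma_k$ matrix, and contract via pairing to a single universal quantifier. There is no real obstacle here beyond notational bookkeeping; the only point that requires a moment of care is that the non-cumulative definition forces us to feed the matrices through the induction hypothesis at the correct level ($\Pi_k$ for the $\Sigma_{k+1}$ step and $\Sigma_k$ for the $\Pi_{k+1}$ step), which is exactly what the simultaneous formulation provides, and that bound variables must be renamed before pulling quantifiers out of a conjunction to avoid variable capture.
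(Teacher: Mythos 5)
Your proof is correct and follows exactly the route the paper intends: the paper's own proof is just the remark that the claim is ``straightforward by simultaneous induction on $k$,'' and your argument is that simultaneous induction carried out in detail (normalizing via Remark \ref{rem: sigma_k and pi_k}, pulling quantifiers out of the conjunction after renaming, applying the induction hypothesis at the lower level, and contracting quantifiers by pairing). No discrepancies to report.
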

\begin{proof}
%\eqref{item: Pi land} and \eqref{item: Sigma land} are shown
Straightforward by simultaneous induction on $k$.
%: Immediate by \cite[Lemma 6.2.1]{vD}.
%One can take the prenex normal form in $\ha.$
\end{proof}

\begin{lemma}
\label{lem: exists AvB}
For any formulas $\vp_1$ and $\vp_2$ in $\Sigma_k$,
%\item
 %\label{item: Sigma or}
 there exists a formula $\vp \in \Sigma_k $ such that $\FV{\vp}= \FV{\vp_1}\cup \FV{\vp_2} $ and $\ha \vdash \vp \lr  \vp_1 \lor \vp_2$.
\end{lemma}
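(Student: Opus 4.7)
The plan is to proceed by induction on $k$, with base case $k = 0$ immediate since $\vp_1 \lor \vp_2$ is then already quantifier-free. For the inductive step I would use the well-known intuitionistic equivalence
\[
\vp_1 \lor \vp_2 \lr \exists n \left((n = 0 \to \vp_1) \land (n \neq 0 \to \vp_2)\right)
\]
from \cite[1.3.7]{Tro73} (already invoked in the excerpt) to turn the outer disjunction into a single outer existential over a conjunction of decidably-guarded implications.

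The technical heart is an auxiliary ``push'' lemma: for any quantifier-free $P$ and any $B \in \Sigma_k$ (resp.\ $B \in \Pi_k$) there exists $B' \in \Sigma_k$ (resp.\ $\Pi_k$) with $\FV{B'} \subseteq \FV{P} \cup \FV{B}$ and $\ha \vdash (P \lor B) \lr B'$. This I would prove by simultaneous induction on $k$ for $\Sigma_k$ and $\Pi_k$: in the step one pushes $P$ inside using $P \lor \exists x B_0 \lr \exists x (P \lor B_0)$ for the $\Sigma$-case, and $P \lor \forall x B_0 \lr \forall x (P \lor B_0)$ for the $\Pi$-case---the latter equivalence being intuitionistically valid precisely because quantifier-free $P$ are decidable in $\ha$---and then applies the inductive hypothesis to the resulting inner disjunction.

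Granted the push lemma, decidability of $n = 0$ in $\ha$ gives $(n = 0 \to \vp_1) \lr (n \neq 0 \lor \vp_1)$ and $(n \neq 0 \to \vp_2) \lr (n = 0 \lor \vp_2)$, so the push lemma supplies $\vp'_1, \vp'_2 \in \Sigma_k$ equivalent to these two implications over $\ha$. By Lemma~\ref{lem: basic facts on arithmetical hierarchy}(\ref{item: Sigma land}), the conjunction $\vp'_1 \land \vp'_2$ is $\ha$-equivalent to some $\vp'' \in \Sigma_k$, which by Remark~\ref{rem: sigma_k and pi_k} we may take of the form $\exists \ol{u}\,\Phi(\ol{u})$ with $\Phi \in \Pi_{k-1}$. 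Prepending the outer $\exists n$ and absorbing it into the leading existential block via a primitive recursive pairing function yields the required $\vp \in \Sigma_k$ with $\FV{\vp} = \FV{\vp_1} \cup \FV{\vp_2}$.

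The principal obstacle is the push lemma, since a disjunction of two $\Pi_{k-1}$ formulas is in general not $\ha$-equivalent to a $\Pi_{k-1}$ formula. The Troelstra decomposition is what makes the argument tractable: it replaces the disjunction by a single existential governing a conjunction of implications whose antecedents are the decidable quantifier-free formulas $n = 0$ and $n \neq 0$, so the push argument is free to traverse any quantifier prefix through the inductive hypothesis, avoiding the problematic disjunction of $\Pi_{k-1}$ formulas entirely.
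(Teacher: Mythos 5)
Your proposal is correct and is essentially the paper's own argument: the paper likewise starts from the equivalence of $\vp_1 \lor \vp_2$ with $\exists n \left((n=0 \to \vp_1) \land (n\neq 0 \to \vp_2)\right)$, commutes the quantifier-free guard with the quantifier prefix, and then concludes via Lemma \ref{lem: basic facts on arithmetical hierarchy} and contraction of the outer existential. The only (cosmetic) difference is that the paper pushes the guard inward as an implication antecedent, using that $\QF{\vp} \to \exists x\, \psi(x)$ and $\QF{\vp} \to \forall x\, \psi(x)$ are $\ha$-equivalent to $\exists x \left(\QF{\vp} \to \psi(x)\right)$ and $\forall x \left(\QF{\vp} \to \psi(x)\right)$ when $x \notin \FV{\QF{\vp}}$, whereas you first rewrite the decidably-guarded implications as disjunctions and push those; your outer induction on $k$ is then redundant, since all the work is in your push lemma.
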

\begin{proof}
%\eqref{item: Sigma or}:
Note that $\vp_1 \lor \vp_2$ is equivalent to
\begin{equation*}\label{eq: vp3 or vp4 without v}
\exists k \left(\left(k=0 \to \vp_1 \right) \land \left( k\neq 0 \to \vp_2 \right) \right)
\end{equation*}
over $\ha$ (see \cite[1.3.7]{Tro73}).
Since $\QF{\vp} \to \exists x \psi(x)$ and  $\QF{\vp} \to \forall x \psi(x)$ are equivalent to $\exists x\left(\QF{\vp} \to  \psi(x)\right)$ and  $\forall x \left( \QF{\vp} \to \psi(x) \right)$ respectively over $\ha$ when $x \notin \FV{\QF{\vp}}$, our assertion follows from Lemma \ref{lem: basic facts on arithmetical hierarchy} straightforwardly.
\end{proof}

\begin{lemma}
\label{lem: basic facts on our classes} 
Let $k$ be a natural number greater than $0$.
%$k$, the following hold:
\begin{enumerate}
    \item
    \label{item: EU and}
    If $\vp_1 \land \vp_2$ is in $\U_k^+$ (resp. $\E_k^+$) if and only if both of $\vp_1 $ and $\vp_2$ are in  $\U_k^+$ (resp. $\E_k^+$).
    \item
        \label{item: EU or}
    $\vp_1 \lor \vp_2$ is in $\U_k^+$ (resp. $\E_k^+$) if and only if both of $\vp_1 $ and $\vp_2$ are in  $\U_k^+$ (resp. $\E_k^+$).
    \item
            \label{item: EU to}
 $\vp_1 \to \vp_2$ is in $\U_k^+$ (resp. $\E_k^+$) if and only if $\vp_1 $ is in $\E_k^+$ (resp. $\U_k^+$) and $\vp_2$ is in  $\U_k^+$ (resp. $\E_k^+$).
 
 \item
         \label{item: U forall}
 $\forall x \vp_1$ is in $\U_k^+$ if and only if $\vp_1$ is in $\U_k^+$.
 \item
          \label{item: E exists}
 $\exists x \vp_1$ is in $\E_k^+$ if and only if  $\vp_1$ is in $\E_k^+$.
  \item
         \label{item: E forall}
$\forall x \vp_1$ is in $\E_{k+1}^+$ if and only if it is  in $\U_k^+$.
   \item
            \label{item: U exists}
$\exists x \vp_1$ is in $\U_{k+1}^+$ if and only if it is in $\E_k^+$.
   \begin{comment}
   \item
   \label{item: F forall}
   $\forall x \vp_1$ is in $\F_{k}^+$ if and only if it is in $\U_k^+$.
\item
   \label{item: F exists}
   $\exists x \vp_1$ is in $\F_{k}^+$ if and only if it is in $\E_k^+$.
\end{comment}
\end{enumerate}
\end{lemma}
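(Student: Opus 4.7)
The plan is to reduce every item to a direct combinatorial argument on the alternation-path sets $\alt{\cdot}$. I would first establish the following reformulation, immediate from Definition \ref{def: Classes}: a formula $\vp$ lies in $\U_k^+$ if and only if $\degree{\vp} \leq k$ and every $s \in \alt{\vp}$ with $l(s) = k$ satisfies $i(s) \equiv -$, and dually $\vp \in \E_k^+$ iff $\degree{\vp} \leq k$ and every length-$k$ path starts with $+$. Under this reformulation each item becomes a closure property of one uniform condition: \emph{degree at most $k$ and length-$k$ paths carry a prescribed initial sign}.

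With the characterization in hand, items \ref{item: EU and} and \ref{item: EU or} are immediate from the equalities $\alt{\vp_1 \land \vp_2} = \alt{\vp_1 \lor \vp_2} = \alt{\vp_1} \cup \alt{\vp_2}$: both the degree bound and the sign condition are plainly preserved by, and reflected through, unions. Item \ref{item: EU to} uses $\alt{\vp_1 \to \vp_2} = \{s^{\perp} \mid s \in \alt{\vp_1}\} \cup \alt{\vp_2}$ together with $l(s^\perp) = l(s)$ and the fact that $i(s^\perp) \equiv -$ iff $i(s) \equiv +$; this swaps the required sign for paths coming from the antecedent while leaving paths from the consequent untouched, explaining the switch between $\U_k^+$ and $\E_k^+$ in the statement.

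For items \ref{item: U forall} and \ref{item: E exists} I would rely on the observation that every nonempty path in $\alt{\forall x \vp_1}$ starts with $-$ (in both branches of the inductive clause), and symmetrically every nonempty path in $\alt{\exists x \vp_1}$ starts with $+$. For item \ref{item: U forall}, a path $t \in \alt{\vp_1}$ contributes either $t$ itself (when $i(t) \equiv -$) or $-t$ (otherwise) to $\alt{\forall x \vp_1}$, so the combined constraints $\degree{\forall x \vp_1} \leq k$ and ``length-$k$ paths start with $-$'' unwind exactly into $\degree{\vp_1} \leq k$ together with ``$l(t) = k$ implies $i(t) \equiv -$'', i.e.\ into $\vp_1 \in \U_k^+$; the converse is obtained by running the same equivalences backwards. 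Item \ref{item: E exists} is symmetric.

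Items \ref{item: E forall} and \ref{item: U exists} exploit the forbidden-sign observation more forcefully: since no nonempty path in $\alt{\forall x \vp_1}$ starts with $+$, membership $\forall x \vp_1 \in \E_{k+1}^+$ collapses to the single condition $\degree{\forall x \vp_1} \leq k$ (there must be no length-$(k+1)$ paths at all), and by the analysis of item \ref{item: U forall} this degree bound is equivalent to $\vp_1 \in \U_k^+$; item \ref{item: U exists} is dual. The one piece of bookkeeping I expect to require the most care is the empty alternation path $\ang{}$ arising from quantifier-free subformulas: because $i(\ang{}) \equiv \times$ is neither $+$ nor $-$, the prepending branch of the $\forall$ and $\exists$ clauses always fires on it, and this is precisely where the hypothesis $k > 0$ is used to avoid degeneracy.
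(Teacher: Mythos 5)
Your proposal is correct and follows essentially the same route as the paper: a direct verification from the inductive definition of $\alt{\cdot}$, using exactly the observations the paper uses (the union/dualization clauses for $\land,\lor,\to$, the fact that every path of $\forall x\,\vp_1$ begins with $-$ and every path of $\exists x\,\vp_1$ with $+$, and the hypothesis $k>0$ to handle the empty path). The only difference is organizational: your upfront characterization of $\U_k^+$ and $\E_k^+$ (degree at most $k$ plus the sign condition on length-$k$ paths) states once the case distinction between $\U_k$ and $\bigcup_{i<k}\F_i$ that the paper instead repeats inside each item.
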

\begin{proof}
\eqref{item: EU and}:
Assume $\vp_1 \land \vp_2 \in \U_k^+$.
Then $l(s) \leq k$ for all $s \in \alt{ \vp_1 \land \vp_2} = \alt{ \vp_1} \cup \alt{ \vp_2}$.
\begin{itemize}
\item
If $l(s) < k$ for all $s \in \alt{ \vp_1}$, then $\vp_1$ is in $\begin{displaystyle}
\bigcup_{i<k} \F_i  \subseteq \U_k^+
\end{displaystyle}
$.
\item
Otherwise, there is $s_0 \in \alt{ \vp_1}$ such that $l(s_0)=k$.
%we have $\degree{\vp_1}=k$.
Then, since $\vp_1 \land \vp_2 \notin \begin{displaystyle}
\bigcup_{i<k} \F_i 
\end{displaystyle}
$, we have $\vp_1 \land \vp_2 \in \U_k$.
Then, for each $s \in \alt{\vp_1}$ such that $l(s)=k$, we have $i(s)\equiv - $ since $s \in \alt{\vp_1 \land \vp_2 }$.
Thus $\vp_1 \in \U_k \subseteq \U_k^+$.
\end{itemize}
We also have $\vp_2 \in \U_k^+$ in the same manner.

For the converse direction, assume that $\vp_1 $ and $\vp_2$ are in $\U_k^+$.
Then, for all $s\in \alt {\vp_1 \land \vp_2}$, since $s\in \alt{\vp_1}$ or $s\in \alt{\vp_2}$, we have $l(s)\leq k$, in particular, $i(s) \equiv -$ if $l(s)=k$.
Thus $\vp_1 \land \vp_2 $ is in $\U_k^+$.

As for the case of $\E_k^+$, an analogous proof works.
%In the same manner, one can also show the case.
%Next assume $\vp_1 \land \vp_2 \in \E_k^+$.
%Then again $l(s) \leq k$ for all $s \in \alt{ \vp_1 \land \vp_2} = \alt{ \vp_1} \cup \alt{ \vp_2}$.

\noindent
\eqref{item: EU or}:
Analogous to \eqref{item: EU and}.

\noindent
\eqref{item: EU to}:
Assume $\vp_1 \to \vp_2 \in \U_k^+$.
Let $s$ be in $\alt{\vp_1 }$.
By the definition of $\alt{\vp_1 \to \vp_2}$, we have $s^{\perp} \in \alt{\vp_1 \to \vp_2}$ and $l(s) \leq k$.
\begin{itemize}
\item
If $l(s) < k$ for all $s \in \alt{ \vp_1}$, then $\vp_1$ is in $\begin{displaystyle}
\bigcup_{i<k} \F_i  \subseteq \E_k^+
\end{displaystyle}
$.
\item
Otherwise,  there is $s_0 \in \alt{ \vp_1}$ such that $l(s_0)=k$.
Since ${s_0}^{\perp} \in \alt{\vp_1 \to \vp_2}$, we have $\vp_1 \to \vp_2 \in \U_k$.
Then, for each $s \in \alt{\vp_1}$ such that $l(s)=k$, we have $i(s^{\perp})\equiv -$, and hence, $i(s)\equiv +$.
Thus $\vp_1 \in \E_k \subseteq \E_k^+$.
\end{itemize}
We also have $\vp_2 \in \U_k^+$ in the same manner.

For the converse direction, assume $\vp_1 \in \E_k^+$ and $\vp_2 \in \U_k^+$.
Since $\degree{\vp_1}\leq k$ and $\degree{\vp_2}\leq k$, we have $\degree{\vp_1 \to \vp_2}\leq k$.
\begin{itemize}
\item
If $\degree{\vp_1 \to \vp_2}<k$, then $\vp_1 \to \vp_2\in 
\begin{displaystyle}
\bigcup_{i<k} \F_i  \subseteq \U_k^+
\end{displaystyle}
$.
\item
If $\degree{\vp_1 \to \vp_2}=k$, for all $s\in \alt{\vp_1 \to \vp_2}$ such that $l(s)=k$, we have $s\in \alt{\vp_2}$ or $s\equiv {s_0}^{\perp}$ for some $s_0\in \alt{\vp_1}$.
In the former case, we have $i(s) \equiv -$ by $\vp_2 \in \U_k^+$.
In the latter case, we have  $i(s_0) \equiv +$ by $\vp_1 \in \E_k^+$, and hence, $i(s)\equiv -$.

\end{itemize}

%As for the case of $\E_k^+$, an 
One can also show that $\vp_1 \to \vp_2$ is in $\E_k^+$ if and only if $\vp_1 $ is in $\U_k^+$ and $\vp_2$ is in $\E_k^+$
analogously.

\noindent
\eqref{item: U forall}:
Assume $\forall x \vp_1 \in \U_k^+$.
\begin{itemize}
\item
If $\forall x \vp_1 \notin \U_k$, then $\forall x \vp_1 \in \begin{displaystyle}
\bigcup_{i<k} \F_i 
\end{displaystyle}$.
Since $\degree{\vp_1}\leq \degree{\forall x \vp_1} <k$, we have $\vp_1 \in \begin{displaystyle}
\bigcup_{i<k} \F_i \subseteq \U_k^+
\end{displaystyle}$.
\item
Otherwise, $\degree{\vp_1}\leq \degree{\forall x \vp_1} =k$.
If $\degree{\vp_1}< k$, then we have $\vp_1 \in \begin{displaystyle}
\bigcup_{i<k} \F_i \subseteq \U_k^+
\end{displaystyle}$.
Assume $\degree{\vp_1}= k$.
Let  $s$ be an alternation path of $\vp_1$ such that $l(s)=k$.
If $i(s) \not \equiv -$, by the definition of $\alt{\forall x \vp_1}$, we have $-s\in \alt{\forall x \vp_1}$, which contradicts $\degree{\forall x \vp_1} =k$ since $l(-s)=k+1$.
Then we have $i(s)\equiv -$.
Thus we have $\vp_1 \in \U_k \subseteq \U_k^+$.
\end{itemize}
For the converse direction, assume  $\vp_1  \in \U_k^+$.
\begin{itemize}
    \item 
If $\vp_1 \notin \U_k$, then $\vp_1 \in \begin{displaystyle}
\bigcup_{i<k} \F_i 
\end{displaystyle}$.
Thus $\degree{\vp_1}<k$, and hence, $\degree{\forall x \vp_1}\leq k$.
If $\degree{\forall x \vp_1}< k$, then $\forall x \vp_1  \in \begin{displaystyle}
\bigcup_{i<k} \F_i 
\end{displaystyle} \subseteq \U_k^+$.
If $\degree{\forall x \vp_1}= k$, since $i(s)\equiv -$ for all $s\in \alt{\forall x \vp_1}$, we have $\forall x \vp_1  \in \U_k  \subseteq \U_k^+$.
\item
Otherwise, $\degree{\vp_1}=k$ and $i(s)\equiv -$ for all $s \in \alt{\vp_1}$ such that $l(s) =k$.
By the definition of $\alt{\forall x\vp_1}$, for all $s \in  \alt{ \forall x \vp_1}$, we have $l(s)\leq k$, and hence, $\degree{\forall x\vp_1}= k$.
In addition, again by the definition of $\alt{\forall x\vp_1}$, we have $i(s)=-$ for all $s\in \alt{\forall x \vp_1}$ such that $l(s) =k$.
Thus $\forall x \vp_1  \in \U_k  \subseteq \U_k^+$.
\end{itemize}

\noindent
\eqref{item: E exists}:
Analogous to \eqref{item: U forall}.

\noindent
\eqref{item: E forall}:
Assume $\forall x \vp_1 \in \E_{k+1}^+$.
Since $i(s)\equiv -$ for all $s\in \alt{\forall x \vp_1}$, $\forall x \vp_1$ is not in $\E_{k+1}$.
Then $\forall x \vp_1 \in \begin{displaystyle}
\bigcup_{i\leq k} \F_i 
\end{displaystyle}$, and hence, $\degree{\forall x \vp_1} \leq k$.
\begin{itemize}
\item
If $\degree{\forall x \vp_1} < k$, then $\forall x \vp_1 \in \begin{displaystyle}
\bigcup_{i< k} \F_i  \subseteq \U_k^+
\end{displaystyle}
$.
\item
If $\degree{\forall x \vp_1} = k$, since $i(s) \equiv -$ for all $s\in \alt{\forall x \vp_1}$, we have $\forall x \vp_1 \in \U_k\subseteq \U_k^+$.
\end{itemize}
%Thus we have  $\forall x \vp_1 \in \U_k^+$.
%, and hence, $ \vp_1 \in \U_k^+$ by \eqref{item: U forall}.

%For the converse direction, assume $forall x \vp_1 \in \U_k^+$.
%By \eqref{item: U forall}, we have
The converse direction is trivial since $ \U_k^+ \subseteq
 \begin{displaystyle}
\bigcup_{i< k+1} \F_i  \subseteq \E_{k+1}^+
\end{displaystyle}
$.

\noindent
\eqref{item: U exists}:
Analogous to \eqref{item: E forall}.
\begin{comment}
\noindent
\eqref{item: F forall}:
Since $\U_k^+\subseteq \F_k^+$, it suffices to show the ``only if'' direction.
Assume $\forall x \vp_1 \in \F_{k}^+$.
\begin{itemize}
    \item 
If $\degree{\forall x \vp_1}< k$, we have $\forall x \vp_1 \in \F_{k-1}^+ \subseteq  \U_{k}^+$.
\item
If $\degree{\forall x \vp_1}=k$, since $i(s) \equiv -$ for all $s\in \alt{\forall x \vp_1}$, we have $\forall x \vp_1 \in \U_k\subseteq \U_k^+$.
\end{itemize}

\noindent
\eqref{item: F exists}:
Analogous to \eqref{item: F forall}.
\end{comment}
\end{proof}

\begin{lemma}
\label{lem: HA+NNS_{k-1}-DNE |- NN(Pk v Pk) <-> NNPk}
Let $k$ be a natural number.
For all $\vp_1$ and $\vp_2$ in $\Pi_k$, there exists $\vp \in \Pi_k$ such that $\FV{\vp} = \FV{\vp_1} \cup \FV{\vp_2}$ and
$\ha + \NN{\DNE{\Sigma_{k-1}}} \, (\ha$ if $k=0)$ proves
%\vdash 
$\neg \neg (\vp_1 \lor \vp_2) \lr \neg \neg \vp $.
\end{lemma}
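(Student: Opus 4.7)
The plan is to split into cases $k=0$ and $k \geq 1$. When $k = 0$, both $\vp_1$ and $\vp_2$ are quantifier-free, so the choice $\vp := \vp_1 \lor \vp_2 \in \Pi_0$ trivially satisfies the claim in $\ha$. When $k \geq 1$, I invoke Remark \ref{rem: sigma_k and pi_k} (together with $\alpha$-renaming of bound variables) to write $\vp_i \equiv \forall x_i \psi_i(x_i)$ with $\psi_i \in \Sigma_{k-1}$ and $x_1, x_2$ distinct and not occurring in $\FV{\vp_1} \cup \FV{\vp_2}$. By Lemma \ref{lem: exists AvB} applied to $\Sigma_{k-1}$, I select $\xi(x_1,x_2) \in \Sigma_{k-1}$ intuitionistically equivalent to $\psi_1(x_1) \lor \psi_2(x_2)$ with $\FV{\xi} = \FV{\psi_1} \cup \FV{\psi_2}$, and set $\vp := \forall x_1 \forall x_2\, \xi(x_1,x_2) \in \Pi_k$. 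A direct computation gives $\FV{\vp} = \FV{\vp_1} \cup \FV{\vp_2}$ as required.

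To prove the equivalence $\neg\neg(\vp_1 \lor \vp_2) \lr \neg\neg \vp$ over $\ha + \NN{\DNE{\Sigma_{k-1}}}$, the $(\Rightarrow)$ direction is purely intuitionistic: each disjunct directly entails $\vp$ (vacuously adjoining a universal quantifier and weakening to a disjunction), so $\vp_1 \lor \vp_2 \to \vp$ holds in $\ha$, hence the $\neg\neg$-version follows at once. For $(\Leftarrow)$, it suffices to show $\vp \to \neg\neg(\vp_1 \lor \vp_2)$, since $\neg\neg A \to \neg\neg B$ is intuitionistically equivalent to $A \to \neg\neg B$. Assume $\vp$ and $\neg(\vp_1 \lor \vp_2)$, aiming for $\perp$; the latter yields $\neg \forall x_1 \psi_1(x_1)$ and $\neg \forall x_2 \psi_2(x_2)$. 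For any fixed $x_2$, assuming $\neg \psi_2(x_2)$, from $\vp$ and the intuitionistic rule $A \lor B,\, \neg B \vdash A$ we obtain $\forall x_1 \psi_1(x_1)$, contradicting $\neg \forall x_1 \psi_1(x_1)$. Hence $\forall x_2 \neg\neg \psi_2(x_2)$.

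The main obstacle --- and the reason $\NN{\DNE{\Sigma_{k-1}}}$ is the appropriate base hypothesis rather than $\DNE{\Sigma_{k-1}}$ itself --- arises at this point: I would like to apply $\DNE{\Sigma_{k-1}}$ to deduce $\forall x_2 \psi_2(x_2)$ and contradict $\neg \forall x_2 \psi_2(x_2)$, but only $\NN{\DNE{\Sigma_{k-1}}}$ is available. The standard ``push under $\neg\neg$'' manoeuvre resolves this: combining $\forall x_2 \neg\neg\psi_2(x_2)$ with the instance $\forall x_2 (\neg\neg\psi_2(x_2) \to \psi_2(x_2))$ of $\DNE{\Sigma_{k-1}}$ would yield $\forall x_2 \psi_2(x_2)$ and hence $\perp$, so I have in fact derived $\neg \forall x_2 (\neg\neg\psi_2(x_2) \to \psi_2(x_2))$; combined with the corresponding instance $\neg\neg \forall x_2 (\neg\neg\psi_2(x_2) \to \psi_2(x_2))$ of $\NN{\DNE{\Sigma_{k-1}}}$, this produces the desired $\perp$. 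This completes $\vp \to \neg\neg(\vp_1 \lor \vp_2)$ and hence the full equivalence.
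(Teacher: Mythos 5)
Your proof is correct and follows essentially the same route as the paper: you pick the same prenex witness $\forall x_1 \forall x_2\, \xi(x_1,x_2)$ obtained from Lemma \ref{lem: exists AvB} after writing $\vp_1,\vp_2$ as $\forall x_i \psi_i(x_i)$ with $\psi_i \in \Sigma_{k-1}$, and you invoke $\NN{\DNE{\Sigma_{k-1}}}$ exactly where the paper does, namely for the direction $\neg\neg\vp \to \neg\neg(\vp_1\lor\vp_2)$. The only difference is presentational: the paper verifies that direction by a chain of double-negation equivalences, while you run a direct refutation argument (deriving $\neg\forall x_2(\neg\neg\psi_2(x_2)\to\psi_2(x_2))$ and contradicting the doubly negated $\DNE{\Sigma_{k-1}}$ instance), which carries the same content and is licensed by the paper's convention (Remark \ref{rem: On formulations}) that the double-negated principles may be used with free parameters.
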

\begin{proof}
Without loss of generality, assume $k>0$, $\vp_1 :\equiv \forall x \rho_1(x)$ and $\vp_2 :\equiv \forall y \rho_2(y)$ where $\rho_1(x), \rho_2(y) \in \Sigma_{k-1}$ (see Remark \ref{rem: sigma_k and pi_k}).
By Lemma \ref{lem: exists AvB}, it suffices to show
\begin{equation*}
\ha +     \NN{\DNE{\Sigma_{k-1}}} \vdash \neg \neg \left( \forall x \rho_1(x) \lor \forall y \rho_2(y)\right) \lr \neg \neg \forall x,y \left( \rho_1(x) \lor \rho_2 (y) \right).
\end{equation*}
The implication from the left to the right is straightforward.
The converse implication is shown as follows: 
%is is the case since
$$
\begin{array}{cl}
& \neg \neg \forall x,y \left( \rho_1(x) \lor \rho_2 (y) \right)
 \\
\underset{ \NN{\DNE{\Sigma_{k-1}}}}{\llr}&\neg \neg \forall x,y \left( \neg \neg \rho_1(x) \lor \neg \neg \rho_2 (y) \right)
 \\
\lra & \forall x,y \neg \neg  \left( \neg \neg \rho_1(x) \lor \neg \neg \rho_2 (y) \right)
 \\
\llr &
\neg \exists x,y \neg \left( \neg \neg \rho_1(x) \lor \neg \neg \rho_2 (y) \right)
 \\
\llr &  \neg \exists x,y \left( \neg \rho_1(x) \land \neg \rho_2 (y) \right)
 \\
 \llr &  \neg  \left(\neg \neg \exists x \neg \rho_1(x) \land \neg \neg \exists y \neg \rho_2 (y) \right)
 \\
  \llr &  \neg  \left(\neg \forall  x \neg \neg \rho_1(x) \land \neg \forall y \neg  \neg \rho_2 (y) \right)
 \\
 \llr&  \neg \neg  \left(\forall  x \neg \neg \rho_1(x) \lor \forall y \neg  \neg \rho_2 (y) \right)\\
 \underset{ \NN{\DNE{\Sigma_{k-1}}}}{\llr}&
  \neg \neg  \left(\forall  x  \rho_1(x) \lor \forall y \rho_2 (y) \right).
\end{array}
$$
\end{proof}

\begin{lemma}
\label{lem: NP and NS}
Let $k$ be a natural number.
\begin{enumerate}
    \item 
    \label{item: NP}
    For all $\vp \in \Pi_k$, there exists $\psi \in \Sigma_k$ such that
$\FV{\vp} = \FV{\psi}$ and
$\ha + \DNE{\Sigma_k} $ proves $ \neg \vp \lr \psi$.
\item
\label{item: NS}
For all $\vp \in \Sigma_k$, there exists $\psi \in \Pi_k$ such that
$\FV{\vp} = \FV{\psi}$ and
$\ha + \DNE{\Sigma_{k-1}} \, (\ha$ if $k=0) $ proves $\neg \vp \lr \psi$.
\end{enumerate}
\begin{proof}
By simultaneous induction on $k$.
The base case is trivial.
In what follows, we show the induction step for $k+1$.
%The induction step is shown by the simultaneous induction on the structure of formulas.

Let $\vp :\equiv \forall x \rho(x)$ where $\rho(x) \in \Sigma_{k}$.
By induction hypothesis, there exists $\rho'(x) \in \Pi_{k}$ such that
$\FV{\rho(x)} =\FV{\rho'(x)}$ and
$$
\ha + \DNE{\Sigma_{k-1}} \vdash \neg \rho(x)  \lr \rho'(x) .
$$
Then $\ha + \DNE{\Sigma_{k+1}}$ proves
$$
\begin{array}{cl}
     & \neg \forall x \rho(x) \\
\underset{\DNE{\Sigma_{k}}}{\llr}     &  \neg \forall x \neg \neg \rho(x)\\
\llr &  \neg \neg \exists x \neg \rho(x)\\
\underset{\text{[I.H.] } \DNE{\Sigma_{k-1}}}{\llr}&  \neg \neg \exists x \rho'(x)\\
\underset{\DNE{\Sigma_{k+1}}}{\llr}     &  \exists x \rho'(x),\\
\end{array}
$$
which is in $\Sigma_{k+1}$.

Next, let $\vp :\equiv \exists x \rho(x)$ where $\rho(x) \in \Pi_{k}$.
By induction hypothesis, there exists $\rho'(x) \in \Sigma_{k}$ such that
$\FV{\rho(x)} =\FV{\rho'(x)}$ and
$$
\ha + \DNE{\Sigma_{k}} \vdash \neg \rho(x)  \lr \rho'(x) .
$$
Then $\ha + \DNE{\Sigma_{k}}$ proves
$$
\neg \exists x \rho(x) \lr \forall x \neg  \rho(x)
\underset{\text{[I.H.] } \DNE{\Sigma_{k}}}{\llr}
\forall x  \rho'(x),
$$
which is in $\Pi_{k+1}$.
\end{proof}

\end{lemma}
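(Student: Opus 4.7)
The plan is to argue by simultaneous induction on $k$, carrying both parts through the same induction so that each half can feed the other at the inductive step. The base case $k=0$ is essentially trivial: since $\Sigma_0 = \Pi_0$ is the class of quantifier-free formulas, for any quantifier-free $\vp$ we can simply take $\psi := \neg \vp$, which itself is quantifier-free and hence lies in both classes, with the required equivalence being a tautology of $\ha$.

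For the inductive step, by Remark \ref{rem: sigma_k and pi_k} I would assume each formula comes in the normal form with a single leading quantifier. For part (1) at level $k+1$, write $\vp \equiv \forall x \rho(x)$ with $\rho(x) \in \Sigma_k$ and invoke the induction hypothesis of part (2) to obtain $\rho'(x) \in \Pi_k$ with $\FV{\rho}=\FV{\rho'}$ and $\ha + \DNE{\Sigma_{k-1}} \vdash \neg \rho(x) \lr \rho'(x)$. I would then chain the following equivalences in $\ha + \DNE{\Sigma_{k+1}}$: first apply $\DNE{\Sigma_k}$ to rewrite $\forall x\, \rho(x)$ as $\forall x\, \neg\neg \rho(x)$, then use the intuitionistic equivalence $\neg \forall x\, \neg\neg \rho(x) \lr \neg\neg \exists x\, \neg \rho(x)$, substitute $\rho'(x)$ for $\neg \rho(x)$ via the induction hypothesis, and finally strip the outer double negation using $\DNE{\Sigma_{k+1}}$, which applies since $\exists x\, \rho'(x) \in \Sigma_{k+1}$. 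Setting $\psi := \exists x\, \rho'(x)$ completes this half.

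For part (2) at level $k+1$, write $\vp \equiv \exists x\, \rho(x)$ with $\rho(x) \in \Pi_k$. The induction hypothesis on part (1) at level $k$ yields $\rho'(x) \in \Sigma_k$ with $\ha + \DNE{\Sigma_k} \vdash \neg \rho(x) \lr \rho'(x)$. Then the intuitionistic equivalence $\neg \exists x\, \rho(x) \lr \forall x\, \neg \rho(x)$, combined with the induction hypothesis applied under $\forall x$, gives $\neg \vp \lr \forall x\, \rho'(x)$ over $\ha + \DNE{\Sigma_k}$, and we take $\psi := \forall x\, \rho'(x) \in \Pi_{k+1}$.

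The main subtlety I anticipate is the bookkeeping of the $\DNE$ levels: part (1) at level $k+1$ genuinely needs the stronger principle $\DNE{\Sigma_{k+1}}$ in order to strip the double negation from the outer existential quantifier, whereas part (2) avoids this because no such stripping occurs and the principle $\DNE{\Sigma_k}$ is already furnished by the inductive hypothesis of part (1). Beyond tracking these indices and verifying that each equivalence in the chain is intuitionistically valid (modulo the specific $\DNE$ instances highlighted), I expect no further obstacle.
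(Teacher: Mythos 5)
Your proof is correct and follows essentially the same route as the paper's: a simultaneous induction on $k$ in which part (1) at level $k+1$ uses part (2) at level $k$ via the chain $\neg\forall x\rho \lr \neg\forall x\neg\neg\rho \lr \neg\neg\exists x\neg\rho \lr \neg\neg\exists x\rho' \lr \exists x\rho'$ (with $\DNE{\Sigma_k}$, the I.H., and $\DNE{\Sigma_{k+1}}$), and part (2) at level $k+1$ uses part (1) at level $k$ through $\neg\exists x\rho \lr \forall x\neg\rho \lr \forall x\rho'$ over $\ha+\DNE{\Sigma_k}$. Your bookkeeping of the $\DNE$ levels matches the paper's verification theories exactly, so nothing further is needed.
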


\begin{lemma}
\label{lem: NPk <-> NNSk}
Let $k$ be a natural number.
For all $\vp \in \Pi_k$, there exists $\psi \in \Sigma_k$ such that
$\FV{\vp} = \FV{\psi}$ and
$\ha + \NN{\DNE{\Sigma_k}} \vdash  \neg \vp \lr \neg \neg \psi$.
\end{lemma}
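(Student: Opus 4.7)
The plan is to combine Lemma \ref{lem: NP and NS}\eqref{item: NP} with the double-negation transfer result Corollary \ref{cor: HA+P |- vp_1 <-> vp_2 => HA+NNP |- NNvp_1 <-> NNvp_2}. First, by Lemma \ref{lem: NP and NS}\eqref{item: NP}, for the given $\vp \in \Pi_k$ I obtain a formula $\psi \in \Sigma_k$ with $\FV{\psi} = \FV{\vp}$ such that
\[
\ha + \DNE{\Sigma_k} \vdash \neg \vp \lr \psi.
\]

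Next, I apply Corollary \ref{cor: HA+P |- vp_1 <-> vp_2 => HA+NNP |- NNvp_1 <-> NNvp_2} with $\vp_1 :\equiv \neg \vp$, $\vp_2 :\equiv \psi$, and ${\rm P} :\equiv \DNE{\Sigma_k}$. This yields
\[
\ha + \NN{\DNE{\Sigma_k}} \vdash \NN{\neg \vp} \lr \NN{\psi}.
\]

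Finally, I use the well-known intuitionistic equivalence $\neg \neg \neg \vp \lr \neg \vp$ to replace $\NN{\neg \vp}$ by $\neg \vp$, concluding
\[
\ha + \NN{\DNE{\Sigma_k}} \vdash \neg \vp \lr \neg \neg \psi,
\]
which is exactly the desired statement (with the same free variables as $\vp$). There is no real obstacle here; the lemma is essentially a direct double-negation translation of Lemma \ref{lem: NP and NS}\eqref{item: NP}, and the only observation needed beyond the quoted results is the triple-negation identity.
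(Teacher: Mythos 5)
Your proof is correct and follows essentially the same route as the paper: apply Lemma \ref{lem: NP and NS}.\eqref{item: NP} to get $\psi \in \Sigma_k$ with $\ha + \DNE{\Sigma_k} \vdash \neg\vp \lr \psi$, then transfer by Corollary \ref{cor: HA+P |- vp_1 <-> vp_2 => HA+NNP |- NNvp_1 <-> NNvp_2}; the paper merely leaves the triple-negation simplification $\neg\neg\neg\vp \lr \neg\vp$ implicit, which you state explicitly.
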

\begin{proof}
Let $\vp \in \Pi_k$.
%It suffices to show that
By Lemma \ref{lem: NP and NS}.\eqref{item: NP},
there exists $\psi \in \Sigma_k$ such that
$\FV{\vp} = \FV{\psi}$ and
$\ha + \DNE{\Sigma_k} \vdash \neg \vp \lr  \psi$.
By Corollary \ref{cor: HA+P |- vp_1 <-> vp_2 => HA+NNP |- NNvp_1 <-> NNvp_2}, we have
$\ha + \NN{\DNE{\Sigma_k}} \vdash \neg \vp \lr \neg \neg \psi$.
\end{proof}

\begin{lemma}
 \label{lem: Ukp-DNS -> NNSk-1-LEM}
%Let $k$ be a natural number greater than $0$.
$\ha + \DNS{\U_k^+} \vdash \NN{\LEM{\Sigma_{k-1}}}$ for each natural number $k>0$.
\end{lemma}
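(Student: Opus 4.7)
The plan is to observe that for any $\Sigma_{k-1}$-formula $\varphi(x)$, the disjunction $\varphi(x) \lor \neg \varphi(x)$ already has degree strictly less than $k$, so it lies in $\U_k^+$; combining this with the intuitionistic tautology $\neg\neg(A \lor \neg A)$ and a single application of $\DNS{\U_k^+}$ yields exactly $\NN{\LEM{\Sigma_{k-1}}}$.

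More concretely, I would fix an instance $\chi \equiv \forall x (\varphi(x) \lor \neg \varphi(x))$ of $\LEM{\Sigma_{k-1}}$ with $\varphi(x) \in \Sigma_{k-1}(x)$, and set $\psi(x) :\equiv \varphi(x) \lor \neg \varphi(x)$. The key computation is that $\degree{\psi} \leq k-1$: since $\varphi \in \Sigma_{k-1}$ we have $\degree{\varphi} \leq k-1$, and by Definition \ref{def: ALT(vp)} the alternation paths of $\neg \varphi$ are the $\perp$-swaps of those of $\varphi$, so $\degree{\neg \varphi} \leq k-1$ as well; for the disjunction, $\alt{\psi} = \alt{\varphi} \cup \alt{\neg \varphi}$, hence $\degree{\psi} \leq k-1 < k$. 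Therefore $\psi(x) \in \bigcup_{i<k} \F_i \subseteq \U_k^+$, and clearly $\FV{\psi} \subseteq \{x\}$.

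Finally, since $\ha$ proves $\neg \neg (\theta \lor \neg \theta)$ for any formula $\theta$, we have $\ha \vdash \forall x \neg \neg \psi(x)$. Instantiating $\DNS{\U_k^+}$ with an empty outer parameter list and inner variable $x$ on the matrix $\psi(x) \in \U_k^+$ gives $\forall x \neg\neg \psi(x) \to \neg \neg \forall x \psi(x)$, and combining yields $\neg \neg \forall x (\varphi(x) \lor \neg \varphi(x))$, which is the desired instance of $\NN{\LEM{\Sigma_{k-1}}}$. There is no real obstacle here: the entire argument rests on the degree computation for $\psi$, which is immediate from the definition of $\alt{\cdot}$ and the membership $\bigcup_{i<k}\F_i \subseteq \U_k^+$ built into Definition \ref{def: Classes}.
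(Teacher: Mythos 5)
Your proof is correct and follows essentially the same route as the paper: observe that $\vp_1(x)\lor\neg\vp_1(x)$ has degree at most $k-1$ and hence lies in $\U_k^+$, note that $\ha$ proves $\forall x\,\neg\neg(\vp_1(x)\lor\neg\vp_1(x))$, and apply one instance of $\DNS{\U_k^+}$ to conclude $\neg\neg\forall x(\vp_1(x)\lor\neg\vp_1(x))$. The only difference is that you spell out the degree computation that the paper states as the one-line observation $(\vp_1(x)\lor\neg\vp_1(x))\in\F_{k-1}\subseteq\U_k^+$.
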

\begin{proof}
Fix an instance of $\NN{\LEM{\Sigma_{k-1}}}$
$$
\vp :\equiv \neg \neg \forall x \left( \vp_1(x) \lor \neg \vp_1(x) \right),
$$
where $\vp_1(x) \in \Sigma_{k-1}$.
Note $\left( \vp_1(x) \lor \neg \vp_1(x) \right) \in \F_{k-1} \subseteq \U_k^+$. 
%By Lemma \ref{lem: basic facts on our classes}, we have $\forall x \left( \vp_1(x) \lor \neg \vp_1(x) \right) \in \U_k$.
Since $\ha $ proves $\forall x \neg \neg \left( \vp_1(x) \lor \neg \vp_1(x) \right)$,
we have that
$\ha + \DNS{\U_k^+}$ proves
$ \neg \neg \forall x \left( \vp_1(x) \lor \neg \vp_1(x) \right)$, namely, $\vp$. 
\end{proof}

\begin{corollary}
\label{cor: Ukp-DNS -> NNSk-1-DNE}
$\ha + \DNS{\U_k^+} \vdash \NN{\DNE{\Sigma_{k-1}}}$  for each natural number $k>0$.
\end{corollary}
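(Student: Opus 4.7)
The plan is to deduce this corollary straightforwardly from the preceding Lemma \ref{lem: Ukp-DNS -> NNSk-1-LEM} by observing that, instance-wise, the law of excluded middle implies double negation elimination already in intuitionistic logic, and that double-negation is monotone along provable implications.

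More concretely, I would first fix an arbitrary instance of $\NN{\DNE{\Sigma_{k-1}}}$, namely $\neg\neg \forall x (\neg\neg \vp(x) \to \vp(x))$ for some $\vp(x) \in \Sigma_{k-1}(x)$. The corresponding instance of $\NN{\LEM{\Sigma_{k-1}}}$, that is $\neg\neg \forall x (\vp(x) \lor \neg \vp(x))$, is provable in $\ha + \DNS{\U_k^+}$ by Lemma \ref{lem: Ukp-DNS -> NNSk-1-LEM}. So it suffices to show that
$$
\ha \vdash \neg\neg \forall x (\vp(x) \lor \neg \vp(x)) \to \neg\neg \forall x (\neg\neg \vp(x) \to \vp(x)).
$$

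For this, I would first verify the underlying (non-negated) implication intuitionistically: given $\vp(x) \lor \neg \vp(x)$, a case distinction yields $\neg\neg \vp(x) \to \vp(x)$ (in the first case directly, in the second case by \emph{ex falso} from $\neg\neg \vp(x) \land \neg \vp(x)$). Hence $\ha$ proves
$$
\forall x (\vp(x) \lor \neg \vp(x)) \to \forall x (\neg\neg \vp(x) \to \vp(x)).
$$
Since double negation is monotone along intuitionistically provable implications (if $\ha \vdash A \to B$ then $\ha \vdash \neg\neg A \to \neg\neg B$), taking $\neg\neg$ on both sides yields the desired implication between the doubly-negated statements, and combining with the consequence of Lemma \ref{lem: Ukp-DNS -> NNSk-1-LEM} completes the proof.

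There is no real obstacle here; the only point that requires a moment of care is making sure the universal closure and the double negation are in the right order, so that the monotonicity of $\neg\neg$ under $\to$ can be applied directly rather than via the double-negation-shift-style reasoning used elsewhere in the paper.
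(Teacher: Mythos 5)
Your proposal is correct and follows essentially the same route as the paper: Lemma \ref{lem: Ukp-DNS -> NNSk-1-LEM} gives $\NN{\LEM{\Sigma_{k-1}}}$ from $\DNS{\U_k^+}$, and the intuitionistic instance-wise implication from $\LEM{\Sigma_{k-1}}$ to $\DNE{\Sigma_{k-1}}$ is transferred to the double-negated versions by the monotonicity of $\neg\neg$ under provable implication (which is exactly the content of Lemma \ref{lem: HA+P |- vp => HA+NNP |- NNvp} and Corollary \ref{cor: HA+P |- vp_1 <-> vp_2 => HA+NNP |- NNvp_1 <-> NNvp_2} that the paper invokes).
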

\begin{proof}
Immediate from Remark \ref{rem: NNDNS <-> DNS}, Lemma \ref{lem: Ukp-DNS -> NNSk-1-LEM} and the fact that $\LEM{\Sigma_{k-1}}$ implies $\DNE{\Sigma_{k-1}}$.
\end{proof}

\begin{remark}
\label{rem: UkpDNS <-> UkDNS}
By Remark \ref{rem: Ekp and Ukp}, $\DNS{\U_k^+}$ is equivalent to $\DNS{\U_k}$ over $\ha$.
Then $\DNS{\U_k^+}$ can be replaced by $\DNS{\U_k}$ throughout the paper.
\end{remark}

\section{Prenex normal form theorems}\label{sec: PNFT}
In this section, we show the modified version of \cite[Theorem 2.7]{ABHK04}.
Prior to that, we first show a variant of the prenex normal form theorem:
\begin{lemma}
\label{PNFT_of_NE_k_and_NNU_k}
For each natural number $k$ and a formula $\varphi$ (possibly containing free variables),
if $\varphi \in \U_k^+$, then there exists $\varphi' \in \Pi_k$ such that $\FV{\vp}=\FV{\vp'}$ and
    $$
    \ha + \DNS{\U_k^+}  \vdash \neg \neg \varphi \leftrightarrow \neg \neg \varphi'.$$
    
%the following hold:
\end{lemma}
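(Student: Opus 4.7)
The plan is to prove Lemma~\ref{PNFT_of_NE_k_and_NNU_k} by induction on the pair $(\degree{\vp}, |\vp|)$ in lexicographic order, simultaneously establishing the companion statement: if $\psi \in \E_k^+$, then there exists $\psi'' \in \Sigma_k$ with $\FV{\psi} = \FV{\psi''}$ and $\ha + \DNS{\U_k^+} \vdash \NN{\psi} \lr \NN{\psi''}$. The companion is forced on us because the antecedent of a $\U_k^+$-implication lies in $\E_k^+$ by Lemma~\ref{lem: basic facts on our classes}.\eqref{item: EU to}, and the matrix of a $\U_k^+$-formula of shape $\exists x \vp_1$ lies in $\E_{k-1}^+$ by Lemma~\ref{lem: basic facts on our classes}.\eqref{item: U exists}. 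The case split on the outermost connective/quantifier of $\vp$ at each step is dictated by Lemma~\ref{lem: basic facts on our classes}.

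The base case ($\vp$ quantifier-free) is trivial: take $\vp' = \vp$. For $\vp \equiv \vp_1 \land \vp_2 \in \U_k^+$, both conjuncts lie in $\U_k^+$, so the I.H.\ supplies $\vp_i' \in \Pi_k$ with $\NN{\vp_i} \lr \NN{\vp_i'}$, and we combine via $\NN{(A \land B)} \lr \NN{A} \land \NN{B}$ and Lemma~\ref{lem: basic facts on arithmetical hierarchy}.\eqref{item: Pi land}. For $\vp \equiv \vp_1 \lor \vp_2 \in \U_k^+$, we likewise apply the I.H.\ and then invoke Lemma~\ref{lem: HA+NNS_{k-1}-DNE |- NN(Pk v Pk) <-> NNPk} to collapse $\NN{(\vp_1' \lor \vp_2')}$ into $\NN{\vp'}$ for a single $\Pi_k$-formula; the side hypothesis $\NN{\DNE{\Sigma_{k-1}}}$ needed there comes from Corollary~\ref{cor: Ukp-DNS -> NNSk-1-DNE}. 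For $\vp \equiv \forall x \vp_1 \in \U_k^+$, the I.H.\ provides $\vp_1' \in \Pi_k$ with $\NN{\vp_1} \lr \NN{\vp_1'}$, and $\NN{\forall x \vp_1} \lr \NN{\forall x \vp_1'}$ follows by using the trivial direction $\NN{\forall x A} \to \forall x \NN{A}$ together with $\DNS{\U_k^+}$ applied in turn to $\vp_1, \vp_1' \in \U_k^+$.

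The main obstacle is the implication case (with the $\exists$-case for $\U_k^+$ being analogous). For $\vp \equiv \vp_1 \to \vp_2 \in \U_k^+$ with $\vp_1 \in \E_k^+$ and $\vp_2 \in \U_k^+$, the I.H.\ and its companion give $\vp_1'' \in \Sigma_k$ and $\vp_2' \in \Pi_k$, and via $\NN{(A \to B)} \lr (\NN{A} \to \NN{B})$ the task reduces to converting $\vp_1'' \to \vp_2'$ into $\Pi_k$-normal form modulo $\NN{}$. Writing $\vp_1'' \equiv \exists \ol{x} \rho_1$ and $\vp_2' \equiv \forall \ol{y} \rho_2$ with $\rho_1 \in \Pi_{k-1}$ and $\rho_2 \in \Sigma_{k-1}$ (using Remark~\ref{rem: sigma_k and pi_k}), one has $\vp_1'' \to \vp_2' \lr \forall \ol{x}\ol{y}(\rho_1 \to \rho_2)$ intuitionistically, and the matrix $\rho_1 \to \rho_2$ lies in $\E_{k-1}^+$ and is of strictly smaller degree than $\vp$. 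The outer (degree) induction therefore furnishes $\rho' \in \Sigma_{k-1}$ with $\NN{(\rho_1 \to \rho_2)} \lr \NN{\rho'}$, and $\DNS{\U_k^+}$ (applicable because both $\E_{k-1}^+$ and $\Sigma_{k-1}$ are contained in $\U_k^+$) lets us push the double negation through the outer universal quantifiers to conclude $\NN{\forall \ol{x}\ol{y}(\rho_1 \to \rho_2)} \lr \NN{\forall \ol{x}\ol{y}\rho'}$, with $\forall \ol{x}\ol{y}\rho' \in \Pi_k$ as required. The outermost-existential case for $\U_k^+$-formulas (where $\exists x \vp_1 \in \U_k^+$ forces $\vp_1 \in \E_{k-1}^+$ of strictly smaller degree) is handled by the same degree reduction, and the $\E_k^+$-companion cases are entirely symmetric.
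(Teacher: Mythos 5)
Your overall strategy — work under double negations, use $\DNS{\U_k^+}$ to move $\neg\neg$ across quantifiers, collapse $\Pi_k\lor\Pi_k$ via Lemma \ref{lem: HA+NNS_{k-1}-DNE |- NN(Pk v Pk) <-> NNPk}, and recurse on the implication matrix one level down — is sound in spirit, but the induction you set up does not support its one essential recursive call. In the implication case the formula $\rho_1\to\rho_2$ you recurse on is not a subformula of $\vp$: it is manufactured from the outputs $\vp_1''\in\Sigma_k$, $\vp_2'\in\Pi_k$ of the induction hypothesis, so its size bears no relation to $|\vp|$ and the only component of your measure that could decrease is the degree. But $\vp\in\U_k^+$ only gives $\degree{\vp}\le k$, whereas the matrix can have degree up to $k-1$; so the claim that it has strictly smaller degree than $\vp$ fails whenever $\degree{\vp}\le k-1$, a situation that cannot be excluded since $\U_k^+$ contains all of $\bigcup_{i<k}\F_i$. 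Concretely, let $\vp_1:\equiv\forall u\exists v\,D(u,v)$ and $\vp_2:\equiv\forall w\,E(w)$ with $D,E$ quantifier-free; then $\vp:\equiv\vp_1\to\vp_2$ lies in $\E_2$, the least $k$ with $\vp\in\U_k^+$ is $3$, and $\degree{\vp}=2$, while any $\Sigma_3$-representative $\vp_1''\equiv\exists\ol{x}\,\rho_1$ has $\rho_1\in\Pi_2$, so the matrix $\rho_1\to\rho_2$ again has degree $2=\degree{\vp}$ (and nothing in the induction hypothesis lets you choose a lower-complexity representative, since a $\neg\neg$-equivalent $\Sigma_2$-formula would be classically equivalent, which fails for properly $\Pi_2$ instances). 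Hence your lexicographic measure $(\degree{\vp},|\vp|)$ does not decrease and the induction hypothesis may not be invoked. The recursion is well-founded only if the level $k$ itself is part of the measure: an outer induction on $k$ (so that the matrix, which lives in $\E_{k-1}^+$ or $\U_{k-1}^+$, is covered by the statement at level $k-1$) combined with an inner induction on the structure of $\vp$ — which is exactly the double induction the paper uses.

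Two further points. First, the $\E_k^+$-companion cases are not ``entirely symmetric'': the prenexation used in your $\U_k^+$-implication case, $(\exists\ol{x}\,\rho_1\to\forall\ol{y}\,\rho_2)\lr\forall\ol{x},\ol{y}\,(\rho_1\to\rho_2)$, is intuitionistically valid, but its mirror image $(\forall\ol{x}\,\rho_1\to\exists\ol{y}\,\rho_2)\lr\exists\ol{x},\ol{y}\,(\rho_1\to\rho_2)$ is not; under $\neg\neg$ it can be recovered over $\ha+\DNS{\U_k^+}$ (unwind $\neg\exists\ol{x},\ol{y}\,(\rho_1\to\rho_2)$ to $\forall\ol{x}\,\neg\neg\rho_1\land\forall\ol{y}\,\neg\rho_2$ and apply $\DNS{\U_k^+}$ to $\rho_1\in\Sigma_{k-1}\subseteq\U_k^+$), but this must be argued separately rather than inherited by symmetry. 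Second, note that the paper's proof avoids both issues by choosing a different companion statement: for $\vp\in\E_k^+$ it produces $\vp'\in\Pi_k$ with $\neg\vp\lr\neg\neg\vp'$, so both halves of the simultaneous induction target $\Pi_k$, every propositional case is handled by $\Pi_k$-conjunction closure together with the $\neg\neg(\Pi_k\lor\Pi_k)$-collapse, no implication is ever prenexed, and the only non-subformula-style appeals are the drops to level $k-1$ in the quantifier cases, which the outer induction on $k$ licenses.
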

\begin{proof}
By simultaneous induction on $k$, we show the following two statements (which are in fact equivalent):
\begin{enumerate}
    \item 
    \label{eq: item for NE_k in PNFT}
    if $\varphi \in \E_k^+$, then there exists $\varphi' \in \Pi_k$ such that $\FV{\vp}=\FV{\vp'}$ and
    $$
    \ha + \DNS{\U_k^+}  \vdash \neg \varphi \leftrightarrow \neg \neg \varphi'; 
    $$
        \item 
        \label{eq: item for NNU_k in PNFT}
    if $\varphi \in \U_k^+$, then there exists $\varphi' \in \Pi_k$ such that $\FV{\vp}=\FV{\vp'}$ and
    $$
    \ha + \DNS{\U_k^+}  \vdash \neg \neg \varphi \leftrightarrow \neg \neg \varphi'.$$
\end{enumerate}
The base case is trivial (one can take $\varphi'$ as $\varphi$ itself).
In what follows, we show the induction step.

For the induction step, assume the items \ref{eq: item for NE_k in PNFT} and \ref{eq: item for NNU_k in PNFT} for $k-1$.
%The induction step is shown by induction on the structure of formulas.
%Fix $k>0$ and assume that for $k'<k$ and a formula $\psi$,
We show the items \ref{eq: item for NE_k in PNFT} and \ref{eq: item for NNU_k in PNFT} for $k$
%and \eqref{eq: IH for not Sigma_k in PNFT} in Theorem \ref{thm: PNFT} 
simultaneously by induction on the structure of formulas.
When $\varphi$ is a  prime  formula, by Lemma \ref{lem: sigma_k and pi_k}, we have $\varphi'$ which satisfies the requirement.
For the induction step, assume that the items \ref{eq: item for NE_k in PNFT} and \ref{eq: item for NNU_k in PNFT} hold for $\vp_1$ and $\vp_2$.
When it is clear from the context, we suppress the argument on free variables.
%On the other hand, 
%Note that
%$\NN{\DNE{\Sigma_{k-1}}}$ is provable in $\ha + \DNS{\U_k^+}$ since $\left( \neg \neg \vp (x) \to \vp(x)\right) \in \U_k^+$ for $\vp(x) \in \Sigma_{k-1}$.

The case of $\vp_1 \land \vp_2$:
First, assume $\vp_1 \land \vp_2 \in \E_k^+$.
By Lemma \ref{lem: basic facts on our classes}, we have $\vp_1,\vp_2 \in \E_k^+$.
By induction hypothesis, there exist $\vp_1' ,\vp_2' \in \Pi_k$ such that
$\ha + \DNS{\U_k^+} \vdash (\neg \vp_1 \leftrightarrow \neg \neg \vp_1' ) \land ( \neg \vp_2 \leftrightarrow \neg \neg \vp_2')$.
%\eqref{eq: IH for Pi_k in PNFT}
%Since there is a $\Pi_k$ 
%By Lemma \ref{lem: basic facts on arithmetical hierarchy}
By Lemma \ref{lem: HA+NNS_{k-1}-DNE |- NN(Pk v Pk) <-> NNPk}, there exists $\vp' \in \Pi_k$ such that $\FV{\vp'} = \FV{\vp_1'} \cup \FV{\vp_2'}$ and
$$\ha +\NN{\DNE{\Sigma_{k-1}}} \vdash \neg \neg (\vp_1' \lor \vp_2') \lr \neg \neg \vp'.$$
%\vp' \lr \vp_1'\land \vp_2' \lr \vp_1 \land \vp_2.$$
%In the same manner, if 
By Corollary \ref{cor: Ukp-DNS -> NNSk-1-DNE}, $\ha + \DNS{\U_k^+}$ proves
$$
\begin{array}{cl}
& \neg (\vp_1 \land \vp_2) \\
\llr & \neg  (\neg \neg \vp_1 \land \neg \neg \vp_2) \\
\underset{\text{[I.H.] } \DNS{\U_k^+}}{\llr} &  \neg  (\neg \vp_1' \land  \neg \vp_2') \\
\llr & \neg \neg (\vp_1' \lor  \neg \vp_2') \\
\underset{\NN{\DNE{\Sigma_{k-1}}}}{\llr} & \neg \neg \vp'.
\end{array}
$$

Next, assume $\vp_1 \land \vp_2 \in \U_k^+$.
By Lemma \ref{lem: basic facts on our classes}, we have $\vp_1,\vp_2 \in \U_k^+$.
By induction hypothesis, there exist $\vp_1' ,\vp_2' \in \Pi_k$ such that
$\ha + \DNS{\U_k^+} \vdash (\neg \neg \vp_1 \leftrightarrow \neg \neg \vp_1' ) \land ( \neg \neg \vp_2 \leftrightarrow \neg \neg \vp_2')$.
By Lemma \ref{lem: basic facts on arithmetical hierarchy}, there exists $\vp' \in \Pi_k$ such that $\FV{\vp'} = \FV{\vp_1'} \cup \FV{\vp_2'}$ and $\ha \vdash \vp' \lr \vp_1' \land \vp_2'$.
Then $\ha + \DNS{\U_k^+}$ proves
$$
\neg \neg (\vp_1 \land \vp_2) \lr \neg \neg \vp_1 \land \neg \neg \vp_2 \underset{\text{[I.H.] }\DNS{\U_k^+} }{\llr}  \neg \neg \vp_1' \land \neg \neg \vp_2' \lr \neg \neg (\vp_1' \land \vp_2') \lr \neg \neg \vp'.
$$

The case of $\vp_1 \lor \vp_2$:
First, assume $\vp_1 \lor \vp_2 \in \E_k^+$.
By Lemma \ref{lem: basic facts on our classes}, we have $\vp_1,\vp_2 \in \E_k^+$.
By induction hypothesis, there exist $\vp_1' ,\vp_2' \in \Pi_k$ such that
$\ha + \DNS{\U_k^+} \vdash (\neg \vp_1 \leftrightarrow \neg \neg \vp_1' ) \land ( \neg \vp_2 \leftrightarrow \neg \neg \vp_2')$.
By Lemma \ref{lem: basic facts on arithmetical hierarchy}, there exists $\vp' \in \Pi_k$ such that $\FV{\vp'} = \FV{\vp_1'} \cup \FV{\vp_2'}$ and $\ha \vdash \vp' \lr \vp_1' \land \vp_2'$.
Then $\ha + \DNS{\U_k^+}$ proves
$$
\begin{array}{cl}
& \neg (\vp_1 \lor \vp_2)\\
\llr & \neg \vp_1 \land \neg \vp_2\\
\underset{\text{[I.H.] }\DNS{\U_k^+} }{\llr} & \neg \neg \vp_1' \land  \neg \neg \vp_2'\\
\llr & \neg \neg (\vp_1' \land \vp_2')\\
\llr & \neg \neg \vp'.
\end{array}
$$

Next,  assume $\vp_1 \lor \vp_2 \in \U_k^+$.
By Lemma \ref{lem: basic facts on our classes}, we have $\vp_1,\vp_2 \in \U_k^+$.
By induction hypothesis, there exist $\vp_1' ,\vp_2' \in \Pi_k$ such that
$\ha + \DNS{\U_k^+} \vdash (\neg \neg \vp_1 \leftrightarrow \neg \neg \vp_1' ) \land ( \neg \neg \vp_2 \leftrightarrow \neg \neg \vp_2')$.
By Lemma \ref{lem: HA+NNS_{k-1}-DNE |- NN(Pk v Pk) <-> NNPk}, there exists $\vp' \in \Pi_k$ such that $\FV{\vp'} = \FV{\vp_1'} \cup \FV{\vp_2'}$ and
$$\ha +\NN{\DNE{\Sigma_{k-1}}} \vdash \neg \neg (\vp_1' \lor \vp_2') \lr \neg \neg \vp'.$$
By Corollary \ref{cor: Ukp-DNS -> NNSk-1-DNE}, $\ha + \DNS{\U_k^+}$ proves
$$
\begin{array}{cl}
& \neg \neg (\vp_1 \lor \vp_2)\\
\llr& \neg \neg ( \neg \neg \vp_1 \lor \neg \neg \vp_2)\\
\underset{\text{[I.H.] }\DNS{\U_k^+} }{\llr} & \neg \neg ( \neg \neg \vp_1' \lor \neg \neg \vp_2')\\
\llr &\neg \neg ( \vp_1' \lor \vp_2')\\
\underset{\NN{\DNE{\Sigma_{k-1}}}}{\llr}& \neg \neg \vp'. 
\end{array}
$$

The case of $\vp_1 \to \vp_2$:
First, assume $\vp_1 \to \vp_2 \in \E_k^+$.
By Lemma \ref{lem: basic facts on our classes}, we have $\vp_1 \in \U_k^+ $ and $\vp_2 \in \E_k^+$.
By induction hypothesis, there exist $\vp_1' ,\vp_2' \in \Pi_k$ such that
$\ha + \DNS{\U_k^+} $ proves  $\neg \neg \vp_1 \leftrightarrow \neg \neg \vp_1' $ and $ \neg \vp_2 \leftrightarrow \neg \neg \vp_2'$.
By Lemma \ref{lem: basic facts on arithmetical hierarchy}, there exists $\vp' \in \Pi_k$ such that $\FV{\vp'} = \FV{\vp_1'} \cup \FV{\vp_2'}$ and $\ha \vdash \vp' \lr \vp_1' \land \vp_2'$.
Then $\ha + \DNS{\U_k^+}$ proves
$$
\begin{array}{cl}
& \neg (\vp_1 \to \vp_2)\\
\llr & \neg \neg \vp_1 \land \neg \vp_2\\
\underset{\text{[I.H.] }\DNS{\U_k^+} }{\llr} & \neg \neg \vp_1' \land \neg \neg \vp_2'\\
\llr & \neg \neg (\vp_1' \land \vp_2')\\
\llr & \neg \neg \vp'.
\end{array}
$$

Next, assume $\vp_1 \to \vp_2 \in \U_k^+$.
By Lemma \ref{lem: basic facts on our classes}, we have $\vp_1 \in \E_k^+ $ and $\vp_2 \in \U_k^+$.
By induction hypothesis, there exist $\vp_1' ,\vp_2' \in \Pi_k$ such that
$\ha + \DNS{\U_k^+}$ proves $\neg \vp_1 \leftrightarrow \neg \neg \vp_1'  $ and $ \neg \neg \vp_2 \leftrightarrow \neg \neg \vp_2'$.
By Lemma \ref{lem: HA+NNS_{k-1}-DNE |- NN(Pk v Pk) <-> NNPk}, there exists $\vp' \in \Pi_k$ such that $\FV{\vp'} = \FV{\vp_1'} \cup \FV{\vp_2'}$ and
$$\ha +\NN{\DNE{\Sigma_{k-1}}} \vdash \neg \neg (\vp_1' \lor \vp_2') \lr \neg \neg \vp'.$$
By Corollary \ref{cor: Ukp-DNS -> NNSk-1-DNE}, $\ha + \DNS{\U_k^+}$ proves
$$
\begin{array}{cl}
& \neg \neg (\vp_1 \to \vp_2)\\
\llr & \neg (\neg \neg \vp_1 \land \neg \vp_2)\\
\underset{\text{[I.H.] }\DNS{\U_k^+} }{\llr} & \neg( \neg \vp_1' \land \neg \vp_2')\\
\llr & \neg \neg (\vp_1' \lor \vp_2')\\
\underset{\NN{\DNE{\Sigma_{k-1}}} }{\llr} &  \neg \neg \vp'.
\end{array}
$$

The case of $\forall x \vp_1(x)$:
First, assume $\forall x \vp_1(x) \in \E_k^+$.
By Lemma \ref{lem: basic facts on our classes}, we have $\forall x \vp_1(x) \in \U_{k-1}^+$.
By the item \ref{eq: item for NNU_k in PNFT} for $k-1$, there exists $\vp' \in \Pi_{k-1}$ such that
$$
\ha + \DNS{\U_{k-1}^+} \vdash \neg \neg \forall x \vp_1(x)  \lr \neg \neg \vp'.
$$
By Lemma \ref{lem: NPk <-> NNSk}, there exists $\vp''\in \Sigma_{k-1} \subseteq \Pi_k$ (see Remark \ref{rem: identification on Sigma_k and Pi_k}) such that $\FV{\vp'}= \FV{\vp''}$ and
$$
\ha +\NN{\DNE{\Sigma_{k-1}}} \vdash \neg  \vp' \lr \neg \neg \vp''.
$$
By Corollary \ref{cor: Ukp-DNS -> NNSk-1-DNE}, $\ha + \DNS{\U_k^+}$ proves
$$
\neg \forall x \vp_1(x)  \underset{\text{[I.H.] }\DNS{\U_{k-1}^+} }{\llr}  \neg  \vp' \underset{\NN{\DNE{\Sigma_{k-1}}} }{\llr} \neg \neg \vp'' .
$$

Next, assume $\forall x \vp_1(x) \in \U_k^+$.
By Lemma \ref{lem: basic facts on our classes}, we have $\vp_1(x) \in \U_{k}^+$.
By induction hypothesis, there exists $\vp_1'(x) \in \Pi_{k}$ such that
$$
\ha + \DNS{\U_{k}^+} \vdash \neg \neg  \vp_1(x)  \lr \neg \neg \vp_1' (x).
$$
Then $\ha + \DNS{\U_k^+}$ proves
$$
 \neg \neg \forall x \vp_1(x)  
 \underset{\DNS{\U_{k}^+}}{\llr}  \forall x \neg \neg  \vp_1(x)   
\underset{\text{[I.H.] }\DNS{\U_{k}^+} }{\llr} \forall x \neg \neg  \vp_1'(x)
 \underset{\DNS{\U_{k}^+}}{\llr}   \neg \neg \forall x \vp_1'(x).   
$$

The case of $\exists x \vp_1(x)$:
First, assume $\exists x \vp_1(x) \in \E_k^+$.
By Lemma \ref{lem: basic facts on our classes}, we have $\vp_1(x) \in \E_{k}^+$.
By induction hypothesis, there exists $\vp_1'(x) \in \Pi_{k}$ such that
$$
\ha + \DNS{\U_{k}^+} \vdash \neg \vp_1(x) \lr \neg \neg \vp_1'(x).
$$
Then $\ha + \DNS{\U_k^+}$ proves
$$
 \neg \exists x \vp_1(x) 
 \lr  \forall x  \neg \vp_1(x) 
\underset{\text{[I.H.] }\DNS{\U_{k}^+} }{\llr} \forall x \neg \neg  \vp_1'(x)
 \underset{\DNS{\U_{k}^+}}{\llr}   \neg \neg \forall x \vp_1'(x).
 $$
 
 Next, assume  that $\exists x \vp_1(x) \in \U_k^+$.
By Lemma \ref{lem: basic facts on our classes}, we have $\exists x \vp_1(x) \in \E_{k-1}^+$.
By the item \ref{eq: item for NE_k in PNFT} for $k-1$, there exists $\vp' \in \Pi_{k-1}$ such that
$$
\ha + \DNS{\U_{k-1}^+} \vdash \neg  \exists x \vp_1(x)  \lr \neg \neg \vp'.
$$
By Lemma \ref{lem: NPk <-> NNSk}, there exists $\vp''\in \Sigma_{k-1} \subseteq \Pi_k$ (see Remark \ref{rem: identification on Sigma_k and Pi_k}) such that $\FV{\vp'}= \FV{\vp''}$ and
$$
\ha +\NN{\DNE{\Sigma_{k-1}}} \vdash \neg  \vp' \lr \neg \neg \vp''.
$$
By Corollary \ref{cor: Ukp-DNS -> NNSk-1-DNE}, $\ha + \DNS{\U_k^+}$ proves
$$
 \neg \neg  \exists x \vp_1(x)  \underset{\text{[I.H.] }\DNS{\U_{k-1}^+} }{\llr} \neg  \vp'\underset{\NN{\DNE{\Sigma_{k-1}}}  }{\llr} \neg \neg \vp''.
$$
\end{proof}

%\begin{remark}
%\label{rem: eq on NE_k and NNU_k}
%\eqref{eq: item for NE_k in PNFT} and \eqref{eq: item for NNU_k in PNFT} in Lemma \ref{PNFT_of_NE_k_and_NNU_k} are equivalent.
%\end{remark}

The following lemma is used a lot of times implicitly in the proof of our prenex normal form theorem (Theorem \ref{thm: PNFT}).
\begin{lemma}[cf. Fact 2.2 in \cite{ABHK04}]
\label{lem: k -> k-1}
Let $k$ be a natural number.
\begin{enumerate}
    \item 
    \label{item: s_k+1-DNE -> (p_k v p_k)-DNE}
    $\ha + \DNE{\Sigma_{k+1}} \vdash \DNE{(\Pi_{k} \lor \Pi_{k})}$.
    \item
        \label{item: (p_k+1 v p_k+1)-DNE -> s_k-DNE}
$\ha +\DNE{(\Pi_{k+1} \lor \Pi_{k+1})} \vdash \DNE{\Sigma_{k}}$.   
  \item
        \label{item: NN(p_k+1 v p_k+1)-DNE -> NNs_k-DNE}
$\ha +\NN{\DNE{(\Pi_{k+1} \lor \Pi_{k+1})}} \vdash \NN{\DNE{\Sigma_{k}}}$.    

\item
\label{item: sk-DNE -> pk+1-DNE}
$
\ha +  \DNE{\Sigma_{k}} \vdash \DNE{\Pi_{k+1}}
$.
%\label{item: }
\end{enumerate}
\end{lemma}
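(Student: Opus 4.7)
The plan is to treat each of the four items in turn using the identification $\Sigma_k \cup \Pi_k \subseteq \Sigma_{k+1} \cap \Pi_{k+1}$ from Remark \ref{rem: identification on Sigma_k and Pi_k}, together with standard intuitionistic manipulations of the double negation. The underlying idea is that crossing one level of the arithmetical hierarchy costs exactly one extra quantifier, which can be absorbed either by the disjunction trick $\vp_1 \lor \vp_2 \lr \exists j \left( (j=0 \to \vp_1) \land (j\neq 0 \to \vp_2) \right)$ (cf.\ the proof of Lemma \ref{lem: exists AvB}) or by the intuitionistic shift $\neg \neg \forall x \rho(x) \to \forall x \neg \neg \rho(x)$.

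For item \eqref{item: s_k+1-DNE -> (p_k v p_k)-DNE}, given $\vp_1,\vp_2\in \Pi_k$, I would rewrite $\vp_1 \lor \vp_2$ as the $\Sigma_{k+1}$-formula $\exists j \left( (j=0 \to \vp_1) \land (j\neq 0 \to \vp_2)\right)$ using Lemma \ref{lem: basic facts on arithmetical hierarchy}.\eqref{item: Pi land} and the fact that $(j=0 \to \vp_1)$ and $(j\neq 0 \to \vp_2)$ remain in $\Pi_k$. An instance of $\DNE{\Sigma_{k+1}}$ applied to this formula then yields $\vp_1 \lor \vp_2$ from $\neg \neg (\vp_1 \lor \vp_2)$.

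For item \eqref{item: (p_k+1 v p_k+1)-DNE -> s_k-DNE}, given $\vp \in \Sigma_k$, I use that $\vp \in \Pi_{k+1}$ (Remark \ref{rem: identification on Sigma_k and Pi_k}) and consider the trivial disjunction $\vp \lor \vp \in \Pi_{k+1} \lor \Pi_{k+1}$. Since $\neg \neg \vp \lr \neg \neg (\vp \lor \vp)$ and $\vp \lor \vp \lr \vp$ intuitionistically, applying $\DNE{(\Pi_{k+1} \lor \Pi_{k+1})}$ gives $\DNE{\Sigma_k}$. Item \eqref{item: NN(p_k+1 v p_k+1)-DNE -> NNs_k-DNE} then follows at once by feeding the instance-wise derivation of \eqref{item: (p_k+1 v p_k+1)-DNE -> s_k-DNE} into Lemma \ref{lem: HA+P |- vp => HA+NNP |- NNvp}.

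For item \eqref{item: sk-DNE -> pk+1-DNE}, using Remark \ref{rem: sigma_k and pi_k} I may assume an instance of $\DNE{\Pi_{k+1}}$ has the form $\neg \neg \forall x \rho(x) \to \forall x \rho(x)$ with $\rho(x) \in \Sigma_k$. The intuitionistically valid implication $\neg \neg \forall x \rho(x) \to \forall x \neg \neg \rho(x)$ combined with $\DNE{\Sigma_k}$ applied pointwise to $\rho(x)$ yields the claim. No step here is a real obstacle; the only point to check carefully is that the encodings in items \eqref{item: s_k+1-DNE -> (p_k v p_k)-DNE} and \eqref{item: (p_k+1 v p_k+1)-DNE -> s_k-DNE} land in the intended class (so that the cited DNE principle actually applies), which is immediate from Lemma \ref{lem: basic facts on arithmetical hierarchy} and Remark \ref{rem: identification on Sigma_k and Pi_k}.
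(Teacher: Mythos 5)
Your proposal is correct and follows essentially the same route as the paper: the $\exists j$-encoding of $\vp_1\lor\vp_2$ via Lemma \ref{lem: basic facts on arithmetical hierarchy} for item \eqref{item: s_k+1-DNE -> (p_k v p_k)-DNE}, a dummy lift of $\Sigma_k$ into $\Pi_{k+1}$ (the paper uses $\forall y\vp$, you use the Remark \ref{rem: identification on Sigma_k and Pi_k} identification, which rests on the same Lemma \ref{lem: sigma_k and pi_k}) for item \eqref{item: (p_k+1 v p_k+1)-DNE -> s_k-DNE}, double-negating that instance-wise derivation via Lemma \ref{lem: HA+P |- vp => HA+NNP |- NNvp} for item \eqref{item: NN(p_k+1 v p_k+1)-DNE -> NNs_k-DNE}, and the intuitionistic shift $\neg\neg\forall x\rho(x)\to\forall x\neg\neg\rho(x)$ plus pointwise $\DNE{\Sigma_k}$ for item \eqref{item: sk-DNE -> pk+1-DNE}. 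No gaps worth noting.
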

\begin{proof}
\eqref{item: s_k+1-DNE -> (p_k v p_k)-DNE}:
For formulas $\vp_1$ and $\vp_2$ in $\Pi_{k}$, $\vp_1 \lor \vp_2$ is equivalent (over $\ha$) to
\begin{equation*}\label{eq: p1 v p1 without v}
\exists k \left(\left(k=0 \to \vp_1 \right) \land \left( k\neq 0 \to \vp_2 \right) \right),
\end{equation*}
which is equivalent to some $\vp \in \Sigma_{k+1}$ such that $\FV{\vp}= \FV{\vp_1 \lor \vp_2}$ over $\ha$ by Lemma \ref{lem: basic facts on arithmetical hierarchy}.\eqref{item: Pi land}.
Therefore any instance of $\DNE{(\Pi_{k} \lor \Pi_{k})}$ is derived from some instance of $\DNE{\Sigma_{k+1}}$.

        \eqref{item: (p_k+1 v p_k+1)-DNE -> s_k-DNE}:
        %Straightforward
        Any instance of $\DNE{\Sigma_{k}}$ is derived from some instance of 
        $\DNE{(\Pi_{k+1} \lor \Pi_{k+1})}$ since $\vp \in \Sigma_k$ is equivalent to $\forall y \vp \in \Pi_{k+1}$ with a variable $y$ not occurring freely in $\vp$ (cf. Lemma \ref{lem: sigma_k and pi_k}). 

\eqref{item: NN(p_k+1 v p_k+1)-DNE -> NNs_k-DNE}:
Immediate from \eqref{item: (p_k+1 v p_k+1)-DNE -> s_k-DNE} and Corollary \ref{cor: HA+P |- vp_1 <-> vp_2 => HA+NNP |- NNvp_1 <-> NNvp_2}.

        %dummy quantifier $\forall x$ in front.
\eqref{item: sk-DNE -> pk+1-DNE}:
Note that $\neg \neg \forall x \vp (x)$ implies  $\neg \neg \forall x \neg \neg \vp (x)$, which is intuitionistically equivalent to $\forall x \neg \neg \vp (x)$.
Then any instance of $\DNE{\Pi_{k+1}}$ is derived from some instance of $\DNE{\Sigma_{k}}$.
\end{proof}

We are now ready to show the modified version of \cite[Theorem 2.7]{ABHK04}.

%The following is our prenex normal form theorem, which is a modified variant of \cite[Theorem 2.7]{ABHK04} and is shown to be optimal in Section \ref{sec: Optimality}.
\begin{theorem}
\label{thm: PNFT}
For each natural number $k$ and a formula $\varphi$ (possibly containing free variables),
the following hold:
\begin{enumerate}
        \item 
        \label{eq: item for Sigma_k in PNFT}
    if $\varphi \in \E_k^+$, then
        there exists $\varphi' \in \Sigma_k$  such that $\FV{\vp}=\FV{\vp'}$ and
    $$
    \ha  + \DNE{\Sigma_k} +\DNS{\U_k^+} \vdash \varphi \leftrightarrow \varphi';
    $$
     \item 
    \label{eq: item for Pi_k in PNFT}
    if $\varphi \in \U_k^+$, then there exists $\varphi' \in \Pi_k$ such that $\FV{\vp}=\FV{\vp'}$ and
    $$
    \ha + \DNE{(\Pi_k\lor \Pi_k)}  \vdash \varphi \leftrightarrow \varphi' .
    $$
%    where $\DNE{\Sigma_{k-1}} $ is omitted if $k=0$.
\end{enumerate}
\end{theorem}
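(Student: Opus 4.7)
The plan is to prove items 1 and 2 simultaneously by induction on the natural number $k$. The base case $k = 0$ is immediate, since $\F_0 = \Sigma_0 = \Pi_0$ and one may take $\vp' := \vp$. For the inductive step, fix $k \geq 1$ and assume items 1 and 2 both hold at $k - 1$. We then prove them for $k$ by a further induction on the structure of $\vp$, handling both items in parallel.

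For the nested structural induction, the atomic case is handled by Lemma \ref{lem: sigma_k and pi_k}. In each logical case we invoke Lemma \ref{lem: basic facts on our classes} to determine the class of each direct subformula, apply the structural IH to obtain prenex equivalents of the subformulas, and combine them into a prenex form for $\vp$: Lemma \ref{lem: basic facts on arithmetical hierarchy} handles conjunctions; Lemma \ref{lem: exists AvB} handles $\Sigma_k$-disjunctions (item 1); Lemma \ref{lem: HA+NNS_{k-1}-DNE |- NN(Pk v Pk) <-> NNPk} together with $\DNE{(\Pi_k \lor \Pi_k)}$ and $\DNE{\Pi_k}$ handles $\Pi_k$-disjunctions (item 2); and Lemmas \ref{lem: NP and NS} and \ref{lem: NPk <-> NNSk} convert the negations that appear in the $\to$-case. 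The quantifier cases are dispatched by items \eqref{item: U forall}, \eqref{item: E exists}, \eqref{item: E forall}, and \eqref{item: U exists} of Lemma \ref{lem: basic facts on our classes}, via either the structural IH at level $k$ or the outer IH at level $k - 1$. Throughout, the principles are kept within the stated bounds by deriving $\DNE{\Pi_k}$ and $\NN{\DNE{\Sigma_{k-1}}}$ from $\DNE{\Sigma_k} + \DNS{\U_k^+}$ (for item 1) via Lemma \ref{lem: k -> k-1} and Corollary \ref{cor: Ukp-DNS -> NNSk-1-DNE}, and from $\DNE{(\Pi_k \lor \Pi_k)}$ (for item 2) via Lemma \ref{lem: k -> k-1}.

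The main obstacle is the $\to$-case of item 2. Since $\vp_1 \to \vp_2 \in \U_k^+$ forces $\vp_1 \in \E_k^+$ by Lemma \ref{lem: basic facts on our classes}, a direct invocation of item 1 on $\vp_1$ would require the stronger principles $\DNE{\Sigma_k} + \DNS{\U_k^+}$ which are not assumed in item 2. The resolution is a careful case analysis on the structure of $\vp_1$, iteratively applying the intuitionistic prenexification identities $(\xi_1 \land \xi_2) \to \eta \lr \xi_1 \to (\xi_2 \to \eta)$, $(\xi_1 \lor \xi_2) \to \eta \lr (\xi_1 \to \eta) \land (\xi_2 \to \eta)$, and $(\exists x \xi) \to \eta \lr \forall x (\xi \to \eta)$ (with $x$ fresh) to absorb the logical structure of $\vp_1$ into the outer $\Pi_k$ form, thereby reducing to sub-implications whose antecedents are of strictly smaller structural complexity and to which the structural IH of item 2 alone already applies.
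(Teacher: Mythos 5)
Your overall skeleton (simultaneous induction on $k$, then a nested induction on the structure of $\vp$, using Lemmas \ref{lem: sigma_k and pi_k}, \ref{lem: basic facts on arithmetical hierarchy}, \ref{lem: exists AvB}, \ref{lem: HA+NNS_{k-1}-DNE |- NN(Pk v Pk) <-> NNPk} and \ref{lem: basic facts on our classes}) matches the paper's, but your treatment of the implication cases has a genuine gap. For item \ref{eq: item for Pi_k in PNFT}, when $\vp_1 \to \vp_2 \in \U_k^+$ the antecedent $\vp_1$ is in $\E_k^+$, and your proposed fix is to absorb its structure using the identities for $\land$, $\lor$ and $\exists$ in antecedent position. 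But an $\E_k^+$ formula may itself be an implication (or a negation), e.g.\ $\vp_1 \equiv (\forall x\, \alpha \to \exists y\, \beta)$ with $\forall x\,\alpha \in \U_k^+$ and $\exists y\,\beta \in \E_k^+$, and for $(\xi_1 \to \xi_2) \to \eta$ there is no intuitionistic absorption identity: the classical rewriting into $(\neg\xi_1 \to \eta) \land (\xi_2 \to \eta)$ requires eliminating a double negation on the antecedent, which is exactly the semi-classical content at stake. Likewise a universal antecedent $\forall x\,\xi$ cannot be pulled out intuitionistically; it only drops to $\U_{k-1}^+$ (Lemma \ref{lem: basic facts on our classes}), after which one still has to convert a $\Pi_{k-1}$ antecedent against a $\Pi_k$ consequent, a step your sketch does not supply. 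So the claim that iterating the three identities reduces everything to "sub-implications to which the structural IH of item \ref{eq: item for Pi_k in PNFT} alone applies" does not hold. The paper resolves precisely this point by strengthening the simultaneous induction with a third, auxiliary assertion: for $\vp \in \E_k^+$ there is $\vp'' \in \Pi_k$ with $\ha + \NN{\DNE{(\Pi_k \lor \Pi_k)}} \vdash \neg\vp \lr \neg\neg\vp''$, and then works with $\neg\neg\vp_1$ instead of $\vp_1$, using $\DNE{\Pi_k}$ and $\DNE{\Sigma_{k-1}}$ (available via Lemma \ref{lem: k -> k-1}) to re-extract a $\Pi_k$ form. Without such an auxiliary negative prenexation statement your induction does not close.

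The same problem occurs, in mirror form, in the $\to$-case of item \ref{eq: item for Sigma_k in PNFT}, and there your proposal does not even offer a patch: when $\vp_1 \to \vp_2 \in \E_k^+$ the antecedent $\vp_1$ is in $\U_k^+$, and you cannot invoke item \ref{eq: item for Pi_k in PNFT} on it because its verification theory $\ha + \DNE{(\Pi_k\lor\Pi_k)}$ is not contained in (nor known to follow from) $\ha + \DNE{\Sigma_k} + \DNS{\U_k^+}$; the tools you list (Lemmas \ref{lem: NP and NS} and \ref{lem: NPk <-> NNSk}, plus deriving $\DNE{\Pi_k}$ and $\NN{\DNE{\Sigma_{k-1}}}$) only manipulate formulas that are already prenex and do not produce a prenex form of $\vp_1$ under the weaker principles. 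This is exactly the role of Lemma \ref{PNFT_of_NE_k_and_NNU_k}, which gives, over $\ha + \DNS{\U_k^+}$ alone, a $\Pi_k$ formula $\forall x_1 \rho_1(x_1)$ with $\neg\neg\vp_1 \lr \neg\neg\forall x_1\rho_1(x_1)$; your plan never invokes it or anything equivalent. To repair the proposal you would need to add both ingredients: the $\DNS{\U_k^+}$-based double-negation prenex lemma for the antecedents in item \ref{eq: item for Sigma_k in PNFT}, and an auxiliary clause of the form $\neg\vp \lr \neg\neg\vp''$ (carried along in the simultaneous induction) for the antecedents in item \ref{eq: item for Pi_k in PNFT}.
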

\begin{proof}
For the proof, we prepare the following auxiliary assertion (which is in fact a consequence from the item \ref{eq: item for Pi_k in PNFT}):
\begin{enumerate}
\setcounter{enumi}{2}
\item 
    \label{eq: item for NNPi_k in PNFT}
    if $\varphi \in \E_k^+$, then there exists $\varphi' \in \Pi_k$ such that $\FV{\vp}=\FV{\vp'}$ and
    $$
    \ha + \NN{\DNE{(\Pi_k\lor \Pi_k)}}  \vdash \neg \varphi \leftrightarrow \neg \neg \varphi'.
    $$
    \end{enumerate}
We show the items \ref{eq: item for Sigma_k in PNFT}, \ref{eq: item for Pi_k in PNFT} and 
\ref{eq: item for NNPi_k in PNFT} by induction on $k$ simultaneously.
The base case is trivial (one can take $\vp'$ as $\vp$ itself).  In what follows, we show the induction step.

Assume the items \ref{eq: item for Sigma_k in PNFT}, \ref{eq: item for Pi_k in PNFT} and \ref{eq: item for NNPi_k in PNFT} for $k-1$.
Since $\ha + \DNE{\Pi_{k-1}} \vdash \DNS{\Pi_{k-1}}$, by the item \ref{eq: item for Pi_k in PNFT} for $k-1$, we have
\begin{equation}
\label{eq: HA + Pk-1Pk-1DNE |- Uk-1pDNS}
\ha + \DNE{\left(\Pi_{k-1}\lor \Pi_{k-1}\right)} \vdash \DNS{\U_{k-1}^+}.
\end{equation}
We show the items \ref{eq: item for Sigma_k in PNFT}, \ref{eq: item for Pi_k in PNFT} and \ref{eq: item for NNPi_k in PNFT} simultaneously by induction on the structure of formulas.
When $\varphi$ is a  prime  formula, by Lemma \ref{lem: sigma_k and pi_k}, we have $\varphi'$ which satisfies the requirement.
For the induction step, assume that the items \ref{eq: item for Sigma_k in PNFT}, \ref{eq: item for Pi_k in PNFT} and \ref{eq: item for NNPi_k in PNFT} hold for $\vp_1$ and $\vp_2$.
When it is clear from the context, we suppress the argument on free variables.

%$\NN{\DNE{\Sigma_{k-1}}}$ is provable in $\ha + \DNS{\U_k^+}$ since
%$\left( \neg \neg \vp (x) \to \vp(x)\right) \in \U_k^+$ for $\vp(x) \in \Sigma_{k-1}$.

The case of $\vp_1 \land \vp_2$:
For the second item, assume $\vp_1 \land \vp_2 \in \U_k^+$.
By Lemma \ref{lem: basic facts on our classes}, we have $\vp_1,\vp_2 \in \U_k^+$.
By induction hypothesis, there exist $\vp_1' ,\vp_2' \in \Pi_k$ such that
$\ha +\DNE{(\Pi_k\lor \Pi_k)} $ proves $\vp_1 \lr \vp_1'$ and $\vp_2 \lr \vp_2'$.
%\vdash (\vp_1 \leftrightarrow \vp_1' ) \land ( \vp_2 \leftrightarrow \vp_2')$.
By Lemma \ref{lem: basic facts on arithmetical hierarchy}, there exists $\vp' \in \Pi_k$ such that $\ha \vdash \vp' \lr \vp_1'\land \vp_2'$.
Then $\ha +\DNE{(\Pi_k\lor \Pi_k)}$ proves
$$\vp_1 \land \vp_2
\underset{\text{[I.H.] }\DNE{(\Pi_k\lor \Pi_k)} }{\llr} \vp_1' \land \vp_2' \lr \vp'.$$

For the first and third items, assume $\vp_1 \land \vp_2 \in \E_k^+$.
By Lemma \ref{lem: basic facts on our classes}, we have $\vp_1,\vp_2 \in \E_k^+$.
%By induction hypothesis, there exist $\vp_1' ,\vp_2' \in \Sigma_k$ such that
%$\ha +\DNE{\Sigma_k} + \DNS{\U_k^+} \vdash (\vp_1 \leftrightarrow \vp_1' ) \land ( \vp_2 \leftrightarrow \vp_2')$.
Then we have $\vp'\in \Sigma_k$ such that $ \ha +\DNE{\Sigma_k} + \DNS{\U_k^+} \vdash \vp_1 \land \vp_2 \lr \vp'$ as in the second item.
%case of $\vp_1 \land \vp_2 \in \U_k^+$.
On the other hand, by induction hypothesis, there exist  $\vp_1'' ,\vp_2'' \in \Pi_k$ such that
$\ha + \NN{\DNE{(\Pi_k\lor \Pi_k)}} $ proves  $\neg \vp_1 \leftrightarrow \neg \neg \vp_1''  $ and  $ \neg \vp_2 \leftrightarrow \neg \neg  \vp_2''$.
In addition, by Lemma \ref{lem: HA+NNS_{k-1}-DNE |- NN(Pk v Pk) <-> NNPk}, there exists $\vp'' \in \Pi_k$ such that
$\ha + \NN{\DNE{\Sigma_{k-1}}} \vdash \neg \neg \vp'' \leftrightarrow \neg \neg (\vp_1'' \lor \vp_2'')$.
Then, by Lemma \ref{lem: k -> k-1}, we have that $\ha +  \NN{\DNE{(\Pi_k\lor \Pi_k)}}$ proves
%by Corollary \ref{cor: Ukp-DNS -> NNSk-1-DNE}, 
$$
\begin{array}{cl}
&\neg (\vp_1 \land \vp_2) \\
\llr& \neg (\neg \neg \vp_1 \land \neg \neg  \vp_2)\\
\llr& \neg \neg (\neg \vp_1 \lor \neg \vp_2)\\
\underset{\text{[I.H.] }\NN{\DNE{(\Pi_{k}\lor \Pi_{k})}}}{\llr} & \neg \neg (\neg \neg \vp_1'' \lor \neg \neg \vp_2'' )\\
\llr& \neg (\neg \vp_1'' \land \neg \vp_2'')\\
\llr&\neg \neg (\vp_1'' \lor \vp_2'')\\
\underset{\NN{\DNE{\Sigma_{k-1}}}}{\llr}&\neg \neg \vp''.
\end{array}
$$

The case of  $\vp_1 \lor \vp_2$:
For the second item, assume $\vp_1 \lor \vp_2 \in \U_k^+$.
By Lemma \ref{lem: basic facts on our classes}, we have $\vp_1,\vp_2 \in \U_k^+$.
Then, by induction hypothesis, there exist $\rho_1(x_1), \rho_2(x_2) \in \Sigma_{k-1}$ such that
$\ha + \DNE{(\Pi_k\lor \Pi_k)}  $ proves  $\vp_1 \lr \forall x_1 \rho_1(x_1) $ and  $\vp_2 \lr \forall x_2 \rho_2(x_2) $.
By Lemma \ref{lem: k -> k-1}, $\ha + \DNE{(\Pi_k\lor \Pi_k)}   $ proves
$$
\begin{array}{cl}
%&\vp_1 \lor \vp_2\\
%\llr
& \forall x_1 \rho_1(x_1) \lor \forall x_2 \rho_2(x_2)\\
\lra& \forall x_1, x_2 \left( \rho_1(x_1) \lor \rho_2(x_2)\right)\\
\lra& \neg \left( \exists x_1 \neg \rho_1 (x_1) \land  \exists x_2 \neg \rho_2 (x_2) \right) \\
\llr& \neg \left( \neg \neg  \exists x_1 \neg \rho_1 (x_1) \land  \neg \neg  \exists x_2 \neg \rho_2 (x_2) \right) \\
\underset{\DNE{\Sigma_{k-1}}}{\llr} &\neg \left( \neg \forall x_1 \rho_1 (x_1) \land  \neg \forall x_2 \rho_2 (x_2) \right) \\
\llr & \neg \neg \left( \forall x_1 \rho_1 (x_1) \lor  \forall x_2 \rho_2 (x_2) \right) \\
\underset{\DNE{(\Pi_k\lor \Pi_k)}}{\lra} &\forall x_1 \rho_1 (x_1) \lor  \forall x_2 \rho_2 (x_2).
\end{array}
$$
By Lemma \ref{lem: exists AvB},
%andhypothesis of course of values induction, 
there exists  $\xi(x_1, x_2)  \in \Sigma_{k-1}$ such that $\ha \vdash \xi(x_1, x_2) \lr \rho_1(x_1) \lor \rho_2(x_2)$.
% $$\ha  + \DNE{\Sigma_{k-1}} + \NN{\DNE{(\Pi_{k-1}\lor \Pi_{k-1})}} \vdash \xi(x_1,x_2) \lr \rho_1(x_1) \lor \rho_2(x_2).$$
Then we have that $\ha + \DNE{(\Pi_k\lor \Pi_k)}  $ proves
$$
\begin{array}{cl}
&\vp_1 \lor \vp_2\\
\underset{\text{[I.H.] }\DNE{(\Pi_k\lor \Pi_k)}}{\llr}& \forall x_1 \rho_1 (x_1) \lor \forall x_2 \rho_2 (x_2)\\
\underset{\DNE{(\Pi_k\lor \Pi_k)}}{\llr} & \forall x_1, x_2 \left( \rho_1(x_1) \lor \rho_2(x_2)\right)\\
\llr& \forall x_1, x_2\,  \xi(x_1, x_2) \in \Pi_k.
\end{array}$$
%which is in $\Pi_k$.

%Then it suffices to show that $  \forall x_1 \rho_1(x_1) \lor \forall x_2 \rho_2(x_2)$ is equivalent to some
%$\Pi_k$-formula in $\ha + \DNE{(\Pi_k\lor \Pi_k)} + \NN{\DNE{\Sigma_k}}$.
For the first and third items, assume $\vp_1 \lor \vp_2 \in \E_k^+$.
By Lemma \ref{lem: basic facts on our classes}, we have $\vp_1,\vp_2 \in \E_k^+$.
By induction hypothesis, there exist $\rho_1(x_1), \rho_2(x_2) \in \Pi_{k-1}$ such that $\ha + \DNE{\Sigma_k} + \DNS{\U_k^+} $ proves  $\vp_1 \lr \exists x_1 \rho_1(x_1) $ and $ \vp_2 \lr \exists x_2 \rho_2(x_2) $.
%By Lemma \ref{lem: basic facts on arithmetical hierarchy} and (a),
%By hypothesis of course of values induction, 
By the item \ref{eq: item for Pi_k in PNFT} for $k-1$, there exists $\xi(x_1, x_2) \in \Pi_{k-1}$ such that  
 $$\ha  + \DNE{(\Pi_{k-1}\lor \Pi_{k-1})} \vdash \xi(x_1,x_2) \lr \rho_1(x_1) \lor \rho_2(x_2).$$
%Since $\vp_1 \lor \vp_2$ is equivalent to $\exists x_1,x_2 \left(\rho_1(x_1) \lor \rho_2(x_2)  \right)$, 
%we have that $\vp_1 \lor \vp_2$ is equivalent to $\exists x_1, x_2\,  \xi(x_1, x_2) \in \Sigma_k $ over $\ha + \DNE{\Sigma_k} + \DNS{\U_k^+} $ (note Lemma \ref{lem: k -> k-1}.\eqref{item: s_k+1-DNE -> (p_k v p_k)-DNE}).
By  Lemma
%\ref{lem: exists AvB} and 
\ref{lem: k -> k-1}, we have that $\ha + \DNE{\Sigma_k} + \DNS{\U_k^+} $ proves
$$
\begin{array}{cl}
&\vp_1 \lor \vp_2 \\
\underset{\text{[I.H.] }\DNE{\Sigma_k},\, \DNS{\U_k^+}}{\llr} & \exists x_1 \rho_1 (x_1) \lor \exists x_2 \rho_2 (x_2)\\
\llr &\exists x_1, x_2 (\rho_1(x_1) \lor \rho_2(x_2)) \\
\underset{\text{[I.H.] }\DNE{(\Pi_{k-1}\lor \Pi_{k-1})}}{\llr} &\exists x_1, x_2\, \xi(x_1, x_2).
\end{array}
$$
Thus we are done for the first item.
For the third item, by induction hypothesis, there exist  $\vp_1'' ,\vp_2'' \in \Pi_k$ such that
$\ha + \NN{\DNE{(\Pi_k\lor \Pi_k)}}$ proves $\neg \vp_1 \leftrightarrow \neg \neg \vp_1''$ and $ \neg \vp_2 \leftrightarrow \neg \neg  \vp_2''$.
In addition, by Lemma \ref{lem: basic facts on arithmetical hierarchy}, there exists $\vp'' \in \Pi_k$ such that
$\ha \vdash \vp'' \leftrightarrow \vp_1'' \land \vp_2''$.
Then $\ha +  \NN{\DNE{(\Pi_k\lor \Pi_k)}}$ proves
$$
\neg (\vp_1 \lor \vp_2 ) \leftrightarrow \neg \vp_1 \land \neg \vp_2 \underset{\text{[I.H.] }\NN{\DNE{(\Pi_{k}\lor \Pi_{k})}}}{\llr}  \neg \neg \vp_1'' \land \neg \neg \vp_2'' \leftrightarrow \neg \neg (\vp_1'' \land \vp_2'') \leftrightarrow \neg \neg \vp''.
$$

The case of $\vp_1 \to \vp_2$:
For the second item, assume $\vp_1 \to \vp_2 \in \U_k^+$.
By Lemma \ref{lem: basic facts on our classes}, we have $\vp_1 \in \E_k^+$ and $\vp_2 \in \U_k^+$.
By induction hypothesis, there exist  $ \rho_1(x_1), \rho_2(x_2)\in \Sigma_{k-1}$ such that
$\ha + \NN{\DNE{(\Pi_{k}\lor \Pi_{k})}} \vdash \neg \vp_1 \lr \neg \neg \forall x_1 \rho_1(x_1)$
and
$\ha + \DNE{(\Pi_{k}\lor \Pi_{k})}  \vdash \vp_2 \lr \forall x_2 \rho_2(x_2)$.
By Lemma \ref{lem: basic facts on our classes}, we have that $\neg \rho_1(x_1) \to \rho_2(x_2)$ is in $\E_{k-1}^+$.
Then,
%by hypothesis of course of values induction, 
by the item \ref{eq: item for Sigma_k in PNFT} for $k-1$,
there exists $\xi(x_1, x_2)  \in \Sigma_{k-1}$ such that
$$
\ha + \DNE{\Sigma_{k-1}} + \DNS{\U_{k-1}^+} \vdash 
\xi(x_1, x_2) \lr  \left( \neg \rho_1(x_1) \to \rho_2(x_2) \right).$$
%In addition, by the item \ref{eq: item for Pi_k in PNFT} for $k-1$, $\ha + \DNE{(\Pi_{k-1}\lor \Pi_{k-1})}$ proves $\neg \neg \psi \to \psi$ for any $\psi \in \U_{k-1}^+$, and hence, $\ha + \DNE{(\Pi_{k-1}\lor \Pi_{k-1})} \vdash \DNS{\U_{k-1}^+}$.
Then, using Lemma \ref{lem: k -> k-1} and \eqref{eq: HA + Pk-1Pk-1DNE |- Uk-1pDNS},
%and Corollary \ref{cor: pk v pk -DNE -> Ukp-DNE},
we have that $\ha + \DNE{(\Pi_{k}\lor \Pi_{k})} $ proves
$$
\begin{array}{cl}
&\vp_1 \to \vp_2\\
\underset{\text{[I.H.] }\DNE{(\Pi_{k}\lor \Pi_{k})}}{\llr} & \vp_1 \to \forall x_2 \rho_2 (x_2)\\
\underset{\DNE{\Pi_k}}{\llr} & \vp_1 \to \neg \neg \forall x_2 \rho_2 (x_2)\\
\llr &  \neg \neg \vp_1 \to  \neg \neg  \forall x_2 \rho_2 (x_2) \\
\underset{\text{[I.H.] }\NN{\DNE{(\Pi_{k}\lor \Pi_{k})}}}{\llr} & \neg \forall x_1 \rho_1(x_1) \to  \neg \neg  \forall x_2 \rho_2 (x_2)\\
\underset{\DNE{\Sigma_{k-1}} }{\llr} & \neg \neg \exists x_1 \neg \rho_1(x_1) \to  \neg \neg  \forall x_2 \rho_2 (x_2)\\
\llr & \neg \neg \forall x_1, x_2 \left( \neg \rho_1(x_1) \to  \rho_2 (x_2) \right)\\
\underset{\text{[I.H.] }\DNE{\Sigma_{k-1}} , \, \DNS{\U_{k-1}^+}  }{\llr} &\neg \neg \forall x_1, x_2 \, \xi( x_1, x_2)\\
\underset{\DNE{\Pi_k}}{\llr} &\forall x_1, x_2 \, \xi( x_1, x_2) \in \Pi_k.\\
\end{array}
$$

For the first and third items, assume $\vp_1 \to \vp_2 \in \E_k^+$.
By Lemma \ref{lem: basic facts on our classes}, we have $\vp_1 \in \U_k^+$ and $\vp_2 \in \E_k^+$.
By induction hypothesis, there exists
%$\rho_1(x_1)  \in \Sigma_{k-1}$ and 
$ \rho_2(x_2) \in \Pi_{k-1}$  such that
%\begin{itemize}
%\item
%$\ha +  \DNE{(\Pi_{k}\lor \Pi_{k})} \vdash \vp_1 \lr \forall x_1 \rho_1(x_1) $
%\item
$\ha + \DNE{\Sigma_{k}} +  \DNS{\U_k^+} \vdash  \vp_2 \lr  \exists x_2 \rho_2(x_2) $.
%$\ha + \DNE{(\Pi_k \lor \Pi_k)} \vdash  \vp_1 \lr \vp_1'$
%and
%$\ha + \NN{\DNE{(\Pi_k \lor \Pi_k)}} \vdash \neg \vp_2 \lr \neg \neg \vp_2'$.
%\end{itemize}
In addition, by Lemma \ref{PNFT_of_NE_k_and_NNU_k}, there exists $ \rho_1(x_1)\in \Sigma_{k-1}$ such that
$\ha + \DNS{\U_k^+} \vdash \neg \neg \vp_1 \lr \neg \neg \forall x_1 \rho_1(x_1)$.
%In particular, we have
%\begin{equation}\label{eq: NN PNFT_pk}
%    \ha +  \NN{\DNE{(\Pi_{k}\lor \Pi_{k})}}\vdash \neg \neg \vp_1 \lr \neg \neg \forall x_1 \rho_1(x_1)
%\end{equation} 
%from the former.
By Lemma \ref{lem: basic facts on our classes}, we have $\neg \rho_2(x_2) \to  \neg \rho_1(x_1)$ is in $\U_{k-1}^+$.
Then,
%by hypothesis of course of values induction, 
by the item \ref{eq: item for Pi_k in PNFT} for $k-1$,
there exists $\xi(x_1, x_2) \in \Pi_{k-1}$ such that
$$
\ha + \DNE{(\Pi_{k-1}\lor \Pi_{k-1})} \vdash \xi(x_1, x_2) \lr  \left(\neg \rho_2(x_2) \to  \neg \rho_1(x_1) \right).$$
Then, using Lemma \ref{lem: k -> k-1}, we have that $\ha + \DNE{\Sigma_{k}}+ \DNS{\U_k^+}$ proves
$$
\begin{array}{cl}
& \vp_1 \to \vp_2 \\
%\lra&  \neg \neg \vp_1 \to \neg \neg  \vp_2\\
\underset{\text{[I.H.] }\DNE{\Sigma_{k}}, \, \DNS{\U_k^+}}{\llr}& \vp_1 \to  \exists x_2 \rho_2(x_2)\\
\underset{\DNE{\Sigma_k}}{\llr}&  \vp_1 \to \neg \neg  \exists x_2 \rho_2(x_2)\\
\llr& \neg \neg \vp_1 \to \neg \neg  \exists x_2 \rho_2(x_2)\\
\underset{ \DNS{\U_k^+}}{\llr}&\neg \neg  \forall x_1 \rho_1(x_1) \to \neg \neg  \exists x_2 \rho_2(x_2)\\
\llr&\neg \neg \exists x_2 \left( \forall x_1 \rho_1(x_1) \to \rho_2(x_2) \right) \\
\underset{\DNE{\Pi_{k-1}}}{\llr}&\neg \neg \exists x_2 \left(\neg  \rho_2(x_2) \to \neg \forall x_1 \rho_1(x_1)  \right) \\
\underset{\DNE{\Sigma_{k-1}}}{\llr}&\neg \neg \exists x_2 \left(\neg  \rho_2(x_2) \to \neg \neg \exists x_1  \neg  \rho_1(x_1)  \right) \\
\llr &\neg \neg \exists x_2 \neg \neg \exists x_1  \left(\neg  \rho_2(x_2) \to  \neg  \rho_1(x_1)  \right) \\
\underset{\text{[I.H.] } \DNE{(\Pi_{k-1}\lor \Pi_{k-1})}}{\llr}&\neg \neg \exists x_1, x_2 \, \xi (x_1, x_2)\\
\underset{ \DNE{\Sigma_k}}{\llr}& \exists x_1, x_2 \, \xi (x_1, x_2) \in \Sigma_k .\\
\end{array}
$$
Thus we are done for the first item.
For the third item, by induction hypothesis, there exist
%$\rho_1(x_1)  \in \Sigma_{k-1}$ and 
$\vp_1', \vp_2' \in \Pi_k$ such that
%\begin{itemize}
%\item
%$\ha +  \DNE{(\Pi_{k}\lor \Pi_{k})} \vdash \vp_1 \lr \forall x_1 \rho_1(x_1) $
%\item
%$\ha + \DNE{\Sigma_{k}} +  \DNS{\U_k^+} \vdash  \vp_2 \lr  \exists x_2 \rho_2(x_2) $,
$\ha + \DNE{(\Pi_k \lor \Pi_k)} \vdash  \vp_1 \lr \vp_1'$
and
$\ha + \NN{\DNE{(\Pi_k \lor \Pi_k)}} \vdash \neg \vp_2 \lr \neg \neg \vp_2'$.
On the other hand, by Lemma \ref{lem: basic facts on arithmetical hierarchy}, there exists $\xi' \in \Pi_k$ such that $\ha \vdash \vp_1' \land \vp_2' \lr \xi'$.
Then $\ha +\NN{\DNE{(\Pi_k \lor \Pi_k)}} $ proves
$$
\neg( \vp_1 \to \vp_2) \lr (\neg \neg \vp_1 \land \neg \vp_2)\underset{\text{[I.H.] } \NN{\DNE{(\Pi_{k}\lor \Pi_{k})}}}{\llr} \neg \neg \vp_1' \land \neg \neg \vp_2' \lr \neg \neg (\vp_1' \land \vp_2') \lr \neg \neg \xi'.
$$

The case of $\forall x  \vp_1 (x )$:
For the second item, assume $\forall x  \vp_1 (x ) \in \U_k^+$.
By Lemma \ref{lem: basic facts on our classes}, we have $\vp_1 (x ) \in \U_k^+$.
By induction hypothesis, there exists $\vp_1'(x )  \in \Pi_{k}$ such that $\ha + \DNE{(\Pi_{k}\lor \Pi_{k})} \vdash \vp_1(x ) \lr \vp_1'(x )$.
Then $\forall x  \vp_1 (x )$ is equivalent to  $\forall x  \vp_1'(x ) \in \Pi_k$ over 
$\ha + \DNE{(\Pi_{k}\lor \Pi_{k})}  $.

For the first and third items, assume  $\forall x  \vp_1 (x ) \in \E_k^+$.
By Lemma \ref{lem: basic facts on our classes}, we have $\forall x \vp_1 (x ) \in \U_{k-1}^+$.
Then,
%by hypothesis of course of values induction,
by the item \ref{eq: item for Pi_k in PNFT} for $k-1$, there exists $\xi \in \Pi_{k-1} \subseteq \Sigma_k$ (see Remark \ref{rem: identification on Sigma_k and Pi_k}) such that
\begin{equation}
\label{eq: con of IH for k-1}
\ha + \DNE{(\Pi_{k-1}\lor \Pi_{k-1})} \vdash \forall x \vp_1(x ) \lr \xi.
\end{equation}
%Then $\ha + \NN{\DNE{(\Pi_{k-1}\lor \Pi_{k-1})}} \vdash \neg \neg \forall x \vp_1(x ) \lr \neg \neg \xi$.
%Note that $ \Pi_{k-1} \subseteq \Sigma_k \cup \Pi_k$ (see Remark \ref{rem: identification on Sigma_k and Pi_k}).
By Lemma \ref{lem: k -> k-1}, we are done for the first item.
For the third item, by Lemma \ref{lem: NP and NS}, there exists $\xi'\in \Sigma_{k-1} \subseteq \Pi_k$ (see Remark \ref{rem: identification on Sigma_k and Pi_k}) such that $\ha + \DNE{\Sigma_{k-1}} \vdash \neg \xi \lr \xi'$.
By Corollary \ref{cor: HA+P |- vp_1 <-> vp_2 => HA+NNP |- NNvp_1 <-> NNvp_2}, we have $\ha + \NN{\DNE{\Sigma_{k-1}}} \vdash \neg \xi \lr  \neg \neg \xi'$.
In addition, 
$$\ha + \NN{\DNE{(\Pi_{k-1}\lor \Pi_{k-1})}} \vdash \neg \neg  \forall x \vp_1(x ) \lr \neg \neg \xi $$
follows from \eqref{eq: con of IH for k-1}.
Then, by Lemma \ref{lem: k -> k-1}, we have that $\ha + \NN{\DNE{(\Pi_{k}\lor \Pi_{k})}}  $ proves
$$
\neg \forall x \vp_1(x ) \underset{\text{[I.H.] } \NN{\DNE{(\Pi_{k-1}\lor \Pi_{k-1})}}}{\llr} \neg \xi \underset{ \NN{\DNE{\Sigma_{k-1}}}}{\llr} \neg \neg \xi'.
$$
Thus we have shown the third item.

The case of $\exists x  \vp_1 (x )$:
%[Using $\DNS{\U_{k-1}^+}$]
For the second item, assume $\exists x  \vp_1 (x ) \in \U_k^+$.
By Lemma \ref{lem: basic facts on our classes}, we have $ \exists x \vp_1 (x ) \in \E_{k-1}^+$.
Then, 
by the item \ref{eq: item for Sigma_k in PNFT} for $k-1$, there exists $\xi  \in \Sigma_{k-1}\subseteq \Pi_k$ (see Remark \ref{rem: identification on Sigma_k and Pi_k}) such that $\ha +\DNE{\Sigma_{k-1}} + \DNS{\U_{k-1}^+} \vdash \exists x \vp_1(x ) \lr \xi$.
%In addition, by the item \ref{eq: item for Pi_k in PNFT} for $k-1$, $\ha + \DNE{(\Pi_{k-1}\lor \Pi_{k-1})}$ proves $\neg \neg \psi \to \psi$ for any $\psi \in \U_{k-1}^+$, and hence, $\ha + \DNE{(\Pi_{k-1}\lor \Pi_{k-1})} \vdash \DNS{\U_{k-1}^+}$.
By Lemma \ref{lem: k -> k-1} and \eqref{eq: HA + Pk-1Pk-1DNE |- Uk-1pDNS}, we have
$\ha + \DNE{(\Pi_{k}\lor \Pi_{k})} \vdash \exists x \vp_1(x ) \lr \xi$.

For the first and third items, assume $\exists x  \vp_1 (x ) \in \E_k^+$.
By Lemma \ref{lem: basic facts on our classes}, we have $\vp_1 (x ) \in \E_k^+$.
By induction hypothesis, there exist $\vp_1'(x )  \in \Sigma_{k}$ and $\vp_1''(x )  \in \Pi_k$ such that $\ha  +\DNE{\Sigma_{k}} +\DNS{\U_{k}^+} \vdash \vp_1(x ) \lr \vp_1'(x )$
and $\ha + \NN{\DNE{(\Pi_{k}\lor \Pi_{k})}} \vdash \neg \vp_1 (x) \lr \neg \neg \vp_1'' (x)$.
Then $\exists x  \vp_1 (x )$ is equivalent to $\exists x  \vp_1'(x ) \in \Sigma_k$ over 
$\ha +\DNE{\Sigma_{k}} +\DNS{\U_{k}^+} $.
Thus we are done for the first item.
For the third item, since $\ha + \NN{\DNE{(\Pi_{k}\lor \Pi_{k})}}$ proves
$$
\neg  \exists x \vp_1(x )\lr \forall x \neg  \vp_1(x )  \underset{ \text{[I.H.] }\NN{\DNE{(\Pi_{k}\lor \Pi_{k})}}}{\llr}   \forall x \neg  \neg \vp_1''(x ) ,
$$
we have that $\ha + \NN{\DNE{(\Pi_{k}\lor \Pi_{k})}}$ proves
%and hence,
$$
\neg  \exists x \vp_1(x )\lr \neg \neg   \forall x \neg  \neg \vp_1''(x ) .
$$
On the other hand, the latter is equivalent to $ \neg \neg   \forall x \vp_1''(x ) $ in the presence of $ \NN{\DNE{\Pi_{k}}}$.
Thus we have $\ha + \NN{\DNE{(\Pi_{k}\lor \Pi_{k})}} \vdash \neg  \exists x \vp_1(x )\lr \neg \neg   \forall x \vp_1''(x )$.
%we are done for the third item.
\end{proof}

\begin{comment}
\begin{corollary}
For each natural number $k$ and a formula $\varphi$ of $\ha$, %(possibly containing free variables),
 if $\varphi \in \U_k^+$, then there exists $\varphi' \in \Pi_k$ such that $\FV{\vp}=\FV{\vp'}$ and
    $$
    \ha +\DNE{\Sigma_{k-1}} + \NN{\DNE{(\Pi_k\lor \Pi_k)}}  \vdash \NN{\varphi} \leftrightarrow \varphi'.
    $$
\end{corollary}
\begin{proof}
Immediate from Theorem \ref{thm: PNFT}.\eqref{eq: item for NNPi_k in PNFT} and Lemma \ref{lem: k -> k-1}.\eqref{item: sk-DNE -> pk+1-DNE}.
\end{proof}
\end{comment}

\begin{corollary}
\label{cor: pk v pk -DNE -> Ukp-DNE}
$\ha + \NN{\DNE{(\Pi_{k}\lor \Pi_{k})}}  \vdash \DNS{\U_k^+} .$
\end{corollary}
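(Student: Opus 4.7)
The plan is to first establish the stronger-looking implication $\ha + \DNE{(\Pi_{k}\lor \Pi_{k})} \vdash \DNS{\U_k^+}$, and then descend to the double-negated hypothesis by combining Lemma \ref{lem: HA+P |- vp => HA+NNP |- NNvp} with the stability of DNS noted in Remark \ref{rem: NNDNS <-> DNS}. No genuinely new idea seems required beyond a careful bookkeeping of where each principle is used.

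For the first step, I would fix an instance of $\DNS{\U_k^+}$, i.e., a formula $\vp(x,y) \in \U_k^+$, and apply Theorem \ref{thm: PNFT}\eqref{eq: item for Pi_k in PNFT} to obtain some $\vp'(x,y) \in \Pi_k$ with
$$\ha + \DNE{(\Pi_{k}\lor \Pi_{k})} \vdash \vp(x,y) \lr \vp'(x,y).$$
This immediately yields, over the same theory, $\forall y\, \vp(x,y) \lr \forall y\, \vp'(x,y)$ and $\neg\neg \vp(x,y) \lr \neg\neg \vp'(x,y)$, so it suffices to prove the DNS-instance for $\vp'$, namely $\forall y\, \neg\neg \vp'(x,y) \to \neg\neg \forall y\, \vp'(x,y)$. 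Since $\vp' \in \Pi_k$ and $\DNE{(\Pi_{k}\lor \Pi_{k})}$ implies $\DNE{\Pi_k}$ by Lemma \ref{lem: k -> k-1} (via items \eqref{item: (p_k+1 v p_k+1)-DNE -> s_k-DNE} and \eqref{item: sk-DNE -> pk+1-DNE}), we have $\neg\neg \vp'(x,y) \to \vp'(x,y)$; hence $\forall y\, \neg\neg \vp'(x,y) \to \forall y\, \vp'(x,y)$, which trivially entails $\neg\neg \forall y\, \vp'(x,y)$.

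For the second step, I would apply Lemma \ref{lem: HA+P |- vp => HA+NNP |- NNvp} (with $\mathrm{P} := \DNE{(\Pi_{k}\lor \Pi_{k})}$ and $\vp$ any instance of $\DNS{\U_k^+}$) to obtain $\ha + \NN{\DNE{(\Pi_{k}\lor \Pi_{k})}} \vdash \NN{\DNS{\U_k^+}}$. Finally, Remark \ref{rem: NNDNS <-> DNS} tells us that $\DNS{\U_k^+}$ is intuitionistically equivalent to $\NN{\DNS{\U_k^+}}$, so the latter yields the former, as required.

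The only place where one has to be attentive is the interplay between the equivalence $\vp \lr \vp'$, which is proved with $\DNE{(\Pi_{k}\lor \Pi_{k})}$, and the double negations appearing in the DNS-formula; but since the equivalence passes under both $\forall y$ and $\neg\neg$ by pure intuitionistic logic, no additional principle intervenes. In particular, the boundary case $k=0$ is handled automatically because $\U_0^+ = \Pi_0$ consists of decidable formulas.
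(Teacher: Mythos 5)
Your proposal is correct and follows essentially the same route as the paper: reduce to showing $\ha + \DNE{(\Pi_k\lor\Pi_k)} \vdash \DNS{\U_k^+}$ via Lemma \ref{lem: HA+P |- vp => HA+NNP |- NNvp} and Remark \ref{rem: NNDNS <-> DNS}, then use Theorem \ref{thm: PNFT}.\eqref{eq: item for Pi_k in PNFT} to replace a $\U_k^+$-formula by an equivalent $\Pi_k$-formula, for which the DNS instance is immediate from $\DNE{\Pi_k}$. The only difference is that you spell out explicitly the steps the paper leaves implicit (e.g.\ that $\DNE{(\Pi_k\lor\Pi_k)}$ yields $\DNE{\Pi_k}$ and that the equivalence passes under $\forall y$ and $\neg\neg$), which is fine.
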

\begin{proof}
Since $\DNS{\U_k^+} $ is intuitionistically equivalent to $\NN{\DNS{\U_k^+} }$ (see Remark \ref{rem: NNDNS <-> DNS}), it suffices to show $\ha + \DNE{(\Pi_{k}\lor \Pi_{k})}  \vdash \DNS{\U_k^+} $.
By Theorem \ref{thm: PNFT}.\eqref{eq: item for Pi_k in PNFT}, any formulas $\vp \in \U_k^+$ is equivalent to some  $\vp' \in \Pi_k$ such that $\FV{\vp} = \FV{\vp'}$ over $\ha + \DNE{(\Pi_{k}\lor \Pi_{k})}$.
%Then $ \DNS{\U_k^+} $ follows from $\DNE{\Pi_k}$.
%Thus
Since $\ha + \DNE{\Pi_k} \vdash \DNS{\Pi_k}$, we have $\ha + \DNE{(\Pi_{k}\lor \Pi_{k})}  \vdash \DNS{\U_k^+} $.
\end{proof}

\begin{remark}
Corollary \ref{cor: pk v pk -DNE -> Ukp-DNE} shows that
Lemma \ref{PNFT_of_NE_k_and_NNU_k} (equivalent to item \ref{eq: item for NE_k in PNFT} in the proof of Lemma \ref{PNFT_of_NE_k_and_NNU_k}) is a stronger statement of the item \ref{eq: item for NNPi_k in PNFT} in the proof of Theorem \ref{thm: PNFT}.
On the other hand, it is still open whether $\ha + \DNS{\U_k^+}$ is a proper subsystem of $\ha + \NN{\DNE{(\Pi_{k}\lor \Pi_{k})}} $.
\end{remark}

\begin{remark}
It follows from Theorem \ref{thm: PNFT} and the results in Section \ref{sec: CE} that $\ha + \DNE{\Sigma_1}$ does not prove $\DNS{\U_1^+}$.
\end{remark}

At the end of this section, we study the prenex normal form theorem for formulas which do not contain the disjunction $\lor$.
In fact, the proof of Theorem \ref{thm: PNFT} suggests that the unusual form $\DNE{(\Pi_k \lor \Pi_k)}$ of the double negation elimination is caused from the argument especially in the case of $\vp_1 \lor \vp_2$.
On the other hand, if a formula $\vp$ does not contain $\lor$, one can intuitionistically derive the original formula $\vp$ from a formula of the prenex normal form which is classically equivalent to $\vp$ (cf. \cite[Lemma 6.2.1]{vD13}).
Then the proof of the prenex normal form theorem for those formulas becomes to be fairly simple.

\begin{theorem}
\label{thm: PNFT for df-formulas}
For each natural number $k$ and a formula $\varphi$ (possibly containing free variables) which does not contain $\lor$,
the following hold:
\begin{enumerate}
        \item 
        \label{eq: item for Sigma_k in PNFT for df-formulas}
    if $\varphi \in \E_k^+$, then
        there exists $\varphi' \in \Sigma_k$  such that $\FV{\vp}=\FV{\vp'}$ and
    $$
    \ha  + \DNE{\Sigma_k} \vdash \varphi \leftrightarrow \varphi';
    $$
     \item 
    \label{eq: item for Pi_k in PNFT for df-formulas}
    if $\varphi \in \U_k^+$, then there exists $\varphi' \in \Pi_k$ such that $\FV{\vp}=\FV{\vp'}$ and
    $$
    \ha + \DNE{\Sigma_{k-1}} \,\, (\ha \text{ if }k=0)  \vdash \varphi \leftrightarrow \varphi' .
    $$
%    where $\DNE{\Sigma_{k-1}} $ is omitted if $k=0$.
\end{enumerate}
\end{theorem}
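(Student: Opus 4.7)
The plan is to adapt the proof of Theorem \ref{thm: PNFT} by omitting the disjunction case, which was the sole source of the stronger principle $\DNE{(\Pi_k \lor \Pi_k)}$. For $\lor$-free $\vp$, van Dalen's Lemma 6.2.1 supplies the direction $\vp^p \to \vp$ intuitionistically, where $\vp^p$ is a classical prenex normal form of $\vp$; only the reverse direction will require the $\DNE$ principles, and the relevant principles simplify to $\DNE{\Sigma_{k-1}}$ for $\U_k^+$ and $\DNE{\Sigma_k}$ for $\E_k^+$.

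I would proceed by outer induction on $k$ and inner structural induction on $\vp$, proving both items (1) and (2) simultaneously. The base case $k=0$ is immediate since $\E_0^+=\U_0^+=\F_0$ and one can take $\vp':=\vp$. For the inductive step at level $k$ (assuming the theorem at $k-1$), the atomic case is handled by Lemma \ref{lem: sigma_k and pi_k}; the $\vp_1 \land \vp_2$ case applies the inner I.H. to each conjunct and combines via Lemma \ref{lem: basic facts on arithmetical hierarchy}; the $\forall x \vp_1$ and $\exists x \vp_1$ cases analyse the class of $\vp_1$ via Lemma \ref{lem: basic facts on our classes} and reduce either to the inner I.H.\ (when $\vp_1$ stays in the same class as $\vp$) or to the outer I.H.\ at $k-1$ (when $\vp_1$ drops to strictly smaller complexity, as in Lemma \ref{lem: basic facts on our classes}.\eqref{item: E forall} and \eqref{item: U exists}).

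The main case is $\vp_1 \to \vp_2$. After converting $\vp_1$ and $\vp_2$ into their prenex forms via the inner I.H., I prenexify the resulting implication by the intuitionistic quantifier moves $(\exists x A) \to B \lr \forall x (A \to B)$ and $A \to \forall y B \lr \forall y (A \to B)$ for the outermost layer; the innermost matrix has the form $\rho_1 \to \rho_2$ with $\rho_1,\rho_2$ of strictly lower complexity (so that $\rho_1\to \rho_2 \in \E_{k-1}^+$ or $\U_{k-1}^+$), and is handled by the outer I.H.\ at $k-1$. The combination of $\rho_1\to\rho_2$ with the resulting outer $\forall$-block is then put into $\Pi_k$ (or $\Sigma_k$) form via Lemma \ref{lem: basic facts on arithmetical hierarchy}.

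The hard part will be the $\to$ case for item (2) when $\vp_1 \in \E_k^+$: a direct use of item (1) on $\vp_1$ would demand $\DNE{\Sigma_k}$, strictly stronger than the target $\DNE{\Sigma_{k-1}}$. To avoid this mismatch, I would strengthen the simultaneous induction with an auxiliary item mirroring item (3) of the proof of Theorem \ref{thm: PNFT}, namely: for $\vp\in\E_k^+$ $\lor$-free there exists $\vp'\in\Pi_k$ with $\FV{\vp}=\FV{\vp'}$ and $\ha + \NN{\DNE{\Sigma_{k-1}}} \vdash \neg\vp \lr \neg\neg\vp'$. This auxiliary item, combined with the fact that $\Pi_k$-formulas are stable under $\DNE{\Pi_k}$ (which follows from $\DNE{\Sigma_{k-1}}$ by Lemma \ref{lem: k -> k-1}.\eqref{item: sk-DNE -> pk+1-DNE}), lets us treat the antecedent through its negation under the weaker principle, in parallel with the earlier proof but with $\DNE{(\Pi_k \lor \Pi_k)}$ replaced by $\DNE{\Sigma_{k-1}}$ throughout.
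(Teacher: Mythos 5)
Your proposal is correct and takes essentially the same route as the paper: the paper also reuses the proof of Theorem \ref{thm: PNFT} minus the disjunction case, adds exactly your auxiliary clause (for $\lor$-free $\vp \in \E_k^+$, some $\vp' \in \Pi_k$ with $\ha + \NN{\DNE{\Sigma_{k-1}}} \vdash \neg\vp \lr \neg\neg\vp'$) as the third item of the simultaneous induction, and handles the one remaining mismatch (the implication case where Lemma \ref{PNFT_of_NE_k_and_NNU_k} was used) by appealing to item (2) at the weaker principle, which is admissible since $\DNE{\Sigma_k}$ contains $\DNE{\Sigma_{k-1}}$. Your sketch matches this in all essentials, including the use of Lemma \ref{lem: k -> k-1}.\eqref{item: sk-DNE -> pk+1-DNE} to get $\DNE{\Pi_k}$ from $\DNE{\Sigma_{k-1}}$.
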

\begin{proof}
We mimic the proof of  Theorem \ref{thm: PNFT}. 
Thus we first prepare the following auxiliary assertion (which
%corresponds to the item \ref{item: PNFT(NEk. NNPk)} in the proof of Lemma \ref{PNFT_of_NE_k_and_NNU_k}):
is in fact a consequence from the item \ref{eq: item for Pi_k in PNFT for df-formulas}):
\begin{enumerate}
\setcounter{enumi}{2}
\item 
    \label{eq: item for NE_k in PNFT for df-formulas}
    if $\varphi \in \E_k^+$, then there exists $\varphi' \in \Pi_k$ such that $\FV{\vp}=\FV{\vp'}$ and
    $$
    \ha + \neg \neg \DNE{\Sigma_{k-1}}  \vdash \neg \varphi \leftrightarrow \neg \neg  \varphi'.
    $$
    \end{enumerate}
%As in the proof of Theorem \ref{thm: PNFT}, 
Then we show the items \ref{eq: item for Sigma_k in PNFT for df-formulas}, \ref{eq: item for Pi_k in PNFT for df-formulas} and 
\ref{eq: item for NE_k in PNFT for df-formulas} by induction on $k$ simultaneously.
The base case is trivial.
%(one can take $\vp'$ as $\vp$ itself).  
%The induction step is verified by induction on the structure of formulas which do not contain $\lor$.
Most of the parts for the induction step is the same as those for Theorem \ref{thm: PNFT}.
The same proof works since the logical principle in the items \ref{eq: item for Sigma_k in PNFT for df-formulas} and \ref{eq: item for Pi_k in PNFT for df-formulas} implies both of them for $k-1$ and  the logical principle in the item \ref{eq: item for NE_k in PNFT for df-formulas}
is the double negation of the logical principle in the item  \ref{eq: item for Pi_k in PNFT for df-formulas} as in Theorem \ref{thm: PNFT}.

Only the difference with the proof of Theorem \ref{thm: PNFT} is in proving the item \ref{eq: item for Sigma_k in PNFT} for $\vp : \equiv \vp_1 \to \vp_2\in \E_k^+$, where we use Lemma \ref{PNFT_of_NE_k_and_NNU_k}.
 % (this is only the part we use 
% in the proof of Theorem \ref{thm: PNFT}).
Here one can use the item \ref{eq: item for Pi_k in PNFT for df-formulas}
%{eq: item for NE_k in PNFT for df-formulas} (one of the induction hypothesis)
instead of Lemma \ref{PNFT_of_NE_k_and_NNU_k}.
This is because $\DNE{\Sigma_k}$ includes $\DNE{\Sigma_{k-1}}$ while the verification theory of the item \ref{eq: item for Sigma_k in PNFT} in Theorem \ref{thm: PNFT} contains  the verification theory of Lemma \ref{PNFT_of_NE_k_and_NNU_k}.
To be absolutely clear, we present the proof of this part:
Let $ \vp_1 \to \vp_2\in \E_k^+$.
By Lemma \ref{lem: basic facts on our classes}, we have $ \vp_1 \in \U_k^+$ and $\vp_2\in \E_k^+$.
By induction hypothesis,
%especially the item \ref{eq: item for Sigma_k in PNFT} for $\vp_2$, 
there exists 
$\rho_1(x_1)  \in \Sigma_{k-1}$ and 
$ \rho_2(x_2) \in \Pi_{k-1}$  such that
%\begin{itemize}
%\item
%$\ha +  \DNE{(\Pi_{k}\lor \Pi_{k})} \vdash \vp_1 \lr \forall x_1 \rho_1(x_1) $
%\item
$\ha + \DNE{\Sigma_{k-1}} \vdash \vp_1 \lr \forall x_1 \rho_1(x_1)$
and
$\ha + \DNE{\Sigma_{k}}  \vdash  \vp_2 \lr  \exists x_2 \rho_2(x_2) $.
By Lemma \ref{lem: basic facts on our classes}, we have $\neg \rho_2(x_2) \to  \neg \rho_1(x_1)$ is in $\U_{k-1}^+$.
Then,
%by hypothesis of course of values induction, 
by the item \ref{eq: item for Pi_k in PNFT} for $k-1$,
there exists $\xi(x_1, x_2) \in \Pi_{k-1}$ such that
$$
\ha +  \DNE{\Sigma_{k-2}} \vdash \xi(x_1, x_2) \lr  \left(\neg \rho_2(x_2) \to  \neg \rho_1(x_1) \right).$$
Then
%, using Lemma \ref{lem: k -> k-1}, we have that 
$\ha + \DNE{\Sigma_{k}}$ proves
$$
\begin{array}{cl}
& \vp_1 \to \vp_2 \\
%\lra&  \neg \neg \vp_1 \to \neg \neg  \vp_2\\
\underset{\text{[I.H.] }\DNE{\Sigma_{k}}}{\llr}& \vp_1 \to  \exists x_2 \rho_2(x_2)\\
\underset{\DNE{\Sigma_k}}{\llr}&  \vp_1 \to \neg \neg  \exists x_2 \rho_2(x_2)\\
%\llr& \neg \neg \vp_1 \to \neg \neg  \exists x_2 \rho_2(x_2)\\
\underset{\text{[I.H.] }\DNE{\Sigma_{k-1}}}{\llr}&  \forall x_1 \rho_1(x_1) \to \neg \neg  \exists x_2 \rho_2(x_2)\\
\llr&\neg \neg \exists x_2 \left( \forall x_1 \rho_1(x_1) \to \rho_2(x_2) \right) \\
\underset{\DNE{\Pi_{k-1}}}{\llr}&\neg \neg \exists x_2 \left(\neg  \rho_2(x_2) \to \neg \forall x_1 \rho_1(x_1)  \right) \\
\underset{\DNE{\Sigma_{k-1}}}{\llr}&\neg \neg \exists x_2 \left(\neg  \rho_2(x_2) \to \neg \neg \exists x_1  \neg  \rho_1(x_1)  \right) \\
\llr &\neg \neg \exists x_2 \neg \neg \exists x_1  \left(\neg  \rho_2(x_2) \to  \neg  \rho_1(x_1)  \right) \\
\underset{\text{[I.H.] }\DNE{\Sigma_{k-2}}}{\llr}&\neg \neg \exists x_1, x_2 \, \xi (x_1, x_2)\\
\underset{ \DNE{\Sigma_k}}{\llr}& \exists x_1, x_2 \, \xi (x_1, x_2) \in \Sigma_k .\\
\end{array}
$$
\end{proof}

\begin{remark}
It follows from Theorem \ref{thm: PNFT} and Corollary \ref{cor: pk v pk -DNE -> Ukp-DNE} that $\LEM{\E_k^+}$, $\LEM{\U_k^+}$ and $\DNE{\U_k^+}$ are equivalent to $\LEM{\Sigma_k}$, $\LEM{\Pi_k}$ and $\DNE{(\Pi_k \lor \Pi_k)}$ respectively  over $\ha$ (cf. \cite[Corollary 2.9]{ABHK04}).
This may not be the case for $\DNE{\E_k^+}$ and $\DNE{\Sigma_k}$.
On the other hand, Theorem \ref{thm: PNFT for df-formulas} implies that $\ha + \DNE{\Sigma_k}$ proves the double negation elimination for all formulas in $\E_k^+$ which do not contain $\lor.$
\end{remark}

\section{A conservation result}
\label{sec: conservativity}
In this section, we generalize a well-known fact that $\pa $ is $\Pi_2$-conservative over $\ha$ in the context of semi-classical arithmetic (see Theorem \ref{thm: conservation results for PNF -> Pi_k+2}).
The fact is normally shown by applying the negative translation followed by the Friedman A-translation (see e.g. \cite[Chapter 14]{Koh08}).
As for the negative translation, there are several equivalent forms (see \cite[Section 1.10.1]{Tro73}).
Here we employ Kuroda's negative translation among them.

\begin{definition}[cf. Definition 10.1 in \cite{Koh08}]
\label{def: NT}
Let $\vp$ be a $\ha$-formula.
Then its negative translation $\vp^N$ is defined as $\vp^N :\equiv \neg \neg  \SN{\vp}$, where $\SN{\vp}$ is defined by induction on the logical structure of $\vp$ as follows: 
\begin{itemize}
    \item 
    $\SNP{\vp_{\rm p}} :\equiv \vp_{\rm p}$ if $\vp_{\rm p}$ is a prime formula;
    \item
    $\SN{(\vp_1 \circ \vp_2)} :\equiv \SNP{\vp_1} \circ \SNP{\vp_2}$, where $\circ \in \{ \land, \lor, \to \}$;
    \item
    $\SN{(\exists x \vp_1)} :\equiv \exists x \SNP{\vp_1}$;
    \item
    $\SN{(\forall x \vp_1)} :\equiv \forall x \neg \neg \SNP{\vp_1}$.
\end{itemize}
\end{definition}

\begin{remark}
\label{rem: NT preserving FVs}
By induction on the structure of formulas, one can show $\FV{\vp} = \FV{\SN{\vp}} =\FV{\vp^N}$ for all formulas $\vp$.
When it is clear from the context, we suppress the argument on free variables.
\end{remark}

\begin{lemma}
\label{lem: vp -> vp*}
For any $\ha$-formula $\vp$ of the prenex normal form, $\ha $ proves $\vp \to \SN{\vp}$.
\end{lemma}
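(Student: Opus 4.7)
The plan is to prove this by induction on the structure of prenex normal form formulas (equivalently, on the number of quantifier blocks), using two basic intuitionistic facts: first, $\psi \to \neg\neg \psi$ holds for any $\psi$; second, if $\ha \vdash \psi \to \psi'$, then $\ha$ also proves $\exists x \psi \to \exists x \psi'$ and $\forall x \psi \to \forall x \psi'$.

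Concretely, I would argue as follows. If $\vp$ is quantifier-free, then $\SN{\vp} \equiv \vp$ by Definition \ref{def: NT}, and the implication is trivial. Using Remark \ref{rem: sigma_k and pi_k}, any prenex formula can be seen (up to form) as either $\exists x \psi$ or $\forall x \psi$ where $\psi$ itself is in prenex normal form with one fewer quantifier block (or a quantifier-free matrix). In the existential case, by the induction hypothesis $\ha \vdash \psi \to \SN{\psi}$, and since $\SN{\exists x \psi} \equiv \exists x \SN{\psi}$, we obtain $\ha \vdash \exists x \psi \to \SN{\exists x \psi}$. In the universal case, by the induction hypothesis $\ha \vdash \psi \to \SN{\psi}$, and composing with $\SN{\psi} \to \neg\neg \SN{\psi}$ gives $\ha \vdash \psi \to \neg\neg \SN{\psi}$; hence $\ha \vdash \forall x \psi \to \forall x \neg\neg \SN{\psi} \equiv \SN{\forall x \psi}$.

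Since $\vp^{N} :\equiv \neg\neg \SN{\vp}$, the claim $\ha \vdash \vp \to \SN{\vp}$ is strictly stronger than $\vp \to \vp^{N}$; the strengthening is exactly what the prenex shape affords, because the negative translation only inserts $\neg\neg$ behind universal quantifiers, and the formula itself already supplies the required positive content. There is no real obstacle here beyond matching the clauses of Definition \ref{def: NT} against the two quantifier cases; the argument is a routine induction and does not require any semi-classical principle.
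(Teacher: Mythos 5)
Your proof is correct and is essentially the paper's own argument: the paper proves Lemma \ref{lem: vp -> vp*} exactly by induction on the structure of prenex formulas, with the existential case handled by monotonicity of $\exists$ and the universal case by composing the induction hypothesis with $\SN{\psi}\to\neg\neg\SN{\psi}$. No further comment is needed.
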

\begin{proof}
By induction on the structure of formulas of the prenex normal form.
\end{proof}

\begin{proposition}
\label{Soundness of NT}
For any $\ha$-formula $\vp$, if $\pa \vdash \vp$, then $\ha \vdash \vp^N$.
\end{proposition}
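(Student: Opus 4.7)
The plan is to proceed by induction on the length of a derivation of $\vp$ in $\pa$, viewing $\pa$ as $\ha + \LEM{}$ formulated in a Hilbert-style calculus over intuitionistic first-order logic. Before starting the induction, I would record an auxiliary substitution lemma $\SN{(\vp[t/x])} \equiv (\SN{\vp})[t/x]$ by a routine induction on $\vp$, so that translation commutes with substitution (and hence with the quantifier axioms); this, together with Remark \ref{rem: NT preserving FVs}, handles bookkeeping of free variables.

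For the base cases, I would separately verify three groups of axioms. First, every axiom schema of intuitionistic predicate logic: for propositional axioms such as $\vp_1 \to (\vp_2 \to \vp_1)$, the translation $\neg \neg (\SN{\vp_1} \to (\SN{\vp_2} \to \SN{\vp_1}))$ is immediate from the fact that the inner implication is intuitionistically valid. For the quantifier axioms, say $\forall x \vp_1 \to \vp_1[t/x]$, the translation reduces to $\neg \neg (\forall x \neg\neg \SN{\vp_1} \to \SN{\vp_1}[t/x])$, which follows intuitionistically by instantiating the universal and eliminating the double negation around a prime occurrence of $\SN{\vp_1}[t/x]$ (using that $\neg\neg \psi \to \psi$ whenever $\psi$ is the conclusion of an implication wrapped in $\neg\neg$). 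Second, the equality axioms and the defining equations for primitive recursive function symbols are (universal closures of) quantifier-free formulas, so their translation is intuitionistically equivalent to themselves. Third, for an instance $\vp_1(0) \land \forall x (\vp_1(x) \to \vp_1(Sx)) \to \forall x \vp_1(x)$ of the induction scheme, one takes intuitionistic induction on $\neg \neg \SN{\vp_1(x)}$, which is available in $\ha$, and then wraps with $\neg\neg$.

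For the LEM axiom $\psi \lor \neg \psi$, its translation $\neg\neg(\SN{\psi} \lor \neg \SN{\psi})$ is a theorem of intuitionistic logic. For the rules, modus ponens is verified by noting that if $\ha \vdash \neg\neg \SN{\vp_1}$ and $\ha \vdash \neg\neg(\SN{\vp_1} \to \SN{\vp_2})$, then $\ha \vdash \neg\neg \SN{\vp_2}$ follows by a routine intuitionistic derivation combining the two double negations. For generalization (if $\vdash \vp_1$ then $\vdash \forall x \vp_1$), from $\ha \vdash \neg\neg \SN{\vp_1}$ we first derive $\ha \vdash \forall x \neg\neg \SN{\vp_1}$ by generalization in $\ha$, and then prepend $\neg\neg$ to obtain $(\forall x \vp_1)^N$.

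The main obstacle I anticipate is verifying the quantifier axioms and the induction scheme with full rigor: the extra $\neg\neg$ that Kuroda's translation inserts after each universal quantifier has to be threaded carefully through substitutions and induction steps so that the final formula really is $\vp^N$, and not some syntactic variant. Once the commutation of $\SN{\cdot}$ with substitution and the three axiom groups above are handled, the rules of inference are essentially immediate, and the induction on the derivation closes.
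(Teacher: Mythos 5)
Your proposal is correct and follows essentially the same route as the paper, which proves the proposition by induction on the length of derivations and simply refers to the standard argument in Kohlenbach's book (Proposition 10.3 there); your sketch just spells out the usual axiom-by-axiom and rule-by-rule verification for Kuroda's translation (including the key fact that $(A \to \neg\neg B) \to \neg\neg(A \to B)$ holds intuitionistically, which also disposes of the $\forall$-introduction and $\exists$-elimination axioms you did not list explicitly). No substantive difference in approach.
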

\begin{proof}
By induction on the length of the derivations (see the proof of \cite[Proposition 10.3]{Koh08}).
\end{proof}

\begin{lemma}
\label{lem: equivalence of vp and vp' over semi-classical arithmetic}
Let $k$ be a natural number.
\begin{enumerate}
    \item
    \label{item: vp'<-> vp in Sigma_k}
    For any $\ha$-formula $\vp\in \Sigma_k$, $\ha + \DNE{\Sigma_{k}}$ proves $\vp^N \lr \vp$.
    \item
    \label{item: vp'<-> vp in Pi_k}
     For any $\ha$-formula $\vp\in \Pi_k$, $\ha + \DNE{\Sigma_{k-1}}$ $(\ha$ if $k=0)$ proves $\vp^N \lr \vp$.
\end{enumerate}
\end{lemma}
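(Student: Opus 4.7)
The plan is to prove both statements by simultaneous induction on $k$, working from the definition of Kuroda's translation. One direction is immediate: by Lemma \ref{lem: vp -> vp*} we have $\ha \vdash \vp \to \SN{\vp}$, and since $\SN{\vp} \to \neg\neg \SN{\vp} \equiv \vp^N$ is intuitionistic, the implication $\vp \to \vp^N$ requires no extra principle. So only the converse $\vp^N \to \vp$ is at stake, and this is where the $\DNE$ assumption is consumed.

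The base case $k=0$ is immediate because for quantifier-free $\vp$ one sees inductively from Definition \ref{def: NT} that $\SN{\vp} \equiv \vp$, and $\neg\neg \vp \lr \vp$ holds intuitionistically for decidable $\vp$. For the induction step, by Remark \ref{rem: sigma_k and pi_k} we may assume $\vp \in \Sigma_{k+1}$ has the shape $\exists x \, \psi(x)$ with $\psi \in \Pi_{k}$, and $\vp \in \Pi_{k+1}$ has the shape $\forall x \, \psi(x)$ with $\psi \in \Sigma_{k}$.

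For item (\ref{item: vp'<-> vp in Sigma_k}) at level $k+1$: by the induction hypothesis for (\ref{item: vp'<-> vp in Pi_k}) applied to $\psi \in \Pi_k$, which uses $\DNE{\Sigma_{k-1}}$ (or nothing when $k=0$) and is thus available from $\DNE{\Sigma_{k+1}}$ via Lemma \ref{lem: k -> k-1}.\eqref{item: sk-DNE -> pk+1-DNE} and monotonicity, we obtain $\psi^N \lr \psi$, i.e.\ $\neg\neg \SN{\psi} \lr \psi$. Combined with $\psi \to \SN{\psi}$ from Lemma \ref{lem: vp -> vp*}, this yields $\SN{\psi} \lr \psi$, hence $\SN{\vp} \equiv \exists x \SN{\psi} \lr \exists x \psi \equiv \vp$. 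Therefore $\vp^N \equiv \neg\neg \SN{\vp} \lr \neg\neg \vp$, and one final application of $\DNE{\Sigma_{k+1}}$ gives $\vp^N \lr \vp$.

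For item (\ref{item: vp'<-> vp in Pi_k}) at level $k+1$: by the induction hypothesis for (\ref{item: vp'<-> vp in Sigma_k}) applied to $\psi \in \Sigma_k$, which requires only $\DNE{\Sigma_k}$ (exactly what we have), we get $\neg\neg \SN{\psi} \lr \psi$. Hence $\SN{\vp} \equiv \forall x \neg\neg \SN{\psi} \lr \forall x \psi \equiv \vp$, so $\vp^N \lr \neg\neg \vp$. To descend from $\neg\neg \vp$ to $\vp$, push the double negation inside using the intuitionistic implication $\neg\neg \forall x \, \psi \to \forall x \neg\neg \psi$ and then apply $\DNE{\Sigma_k}$ to each instance of $\psi$. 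The main organizational point — and the only place where one must be careful about which principle is invoked — is matching the quantifier complexity of $\psi$ against the available $\DNE$; no serious obstacle arises, since the Kuroda translation leaves $\exists$ untouched and the additional $\neg\neg$ in front of universal quantifiers is exactly compensated by the $\DNE{\Sigma_{k-1}}$ assumption provided by the statement.
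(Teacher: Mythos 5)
Your proof is correct and follows essentially the same route as the paper's: simultaneous induction on $k$, reducing to the shapes $\exists x\,\psi$ and $\forall x\,\psi$ of Remark \ref{rem: sigma_k and pi_k}, applying the induction hypothesis to $\psi$, and then removing the outermost double negation via $\DNE{\Sigma_{k+1}}$ in the existential case and via $\DNE{\Pi_{k+1}}$ in the universal case, which you re-derive inline from $\DNE{\Sigma_k}$ exactly as in Lemma \ref{lem: k -> k-1}.\eqref{item: sk-DNE -> pk+1-DNE}. The only cosmetic deviations are that in the $\Sigma_{k+1}$ case you strengthen the induction hypothesis to $\SN{\psi}\lr\psi$ via Lemma \ref{lem: vp -> vp*} instead of reasoning under $\neg\neg$ as the paper does, and that the availability of $\DNE{\Sigma_{k-1}}$ from $\DNE{\Sigma_{k+1}}$ is really just the hierarchy inclusion given by Lemma \ref{lem: sigma_k and pi_k} (dummy quantifiers) rather than Lemma \ref{lem: k -> k-1}.\eqref{item: sk-DNE -> pk+1-DNE}; neither affects correctness.
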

\begin{proof}
By simultaneous induction on $k$.
The base case is trivial.
For the induction step, assume the items \ref{item: vp'<-> vp in Sigma_k} and \ref{item: vp'<-> vp in Pi_k} for $k$ to show those for $k+1$.
For the first item, let  $\exists x \vp_1 \in \Sigma_{k+1}$ where $\vp_1 \in \Pi_k$.
We have that $\ha + \DNE{\Sigma_{k+1}}$ proves
$$
\left( \exists x \vp_1 \right)^N\equiv \neg \neg \exists x \SNP{\vp_1} %\underset{\DNE{\Pi_{k}}}{\llr}
\lr
\neg \neg \exists x \neg \neg \SNP{\vp_1}
\underset{\text{[I.H.] }\DNE{\Sigma_{k-1}}}{\llr} \neg \neg \exists x \vp_1 \underset{\DNE{\Sigma_{k+1}}}{\llr}
\exists x \vp_1 .
$$
For the second item, let $\forall x \vp_1 \in \Pi_{k+1}$ where $\vp_1 \in \Sigma_k$.
Since $\DNE{\Pi_{k+1}}$ is derived from $\DNE{\Sigma_k}$ (see Lemma \ref{lem: k -> k-1}.\eqref{item: sk-DNE -> pk+1-DNE}), we have that $\ha + \DNE{\Sigma_k}$ proves
$$
\left( \forall x \vp_1 \right)^N\equiv \neg \neg \forall x \neg \neg \SNP{\vp_1} \underset{\text{[I.H.] }\DNE{\Sigma_k}}{\llr} \neg \neg \forall x \vp_1 \underset{\DNE{\Pi_{k+1}}}{\llr}
\forall x \vp_1 .
$$

\end{proof}

%In the context of the translation, without otherwise stated, wework in the language with an additional predicate symbol∗of arity 0, whichbehaves as a “place holder” (See [2,3] for more information).
Let $\ha^{\PH} $ denote $\ha$ in the extended language where a predicate symbol ${\PH} $ of arity $0$, which behaves as a “place holder”, is added.
In particular, $\ha^{\PH}$ has $\perp \to *$ as an axiom.
To make our arguments absolutely clear, we prefer to add the distinguished new predicate ${\PH} $
%(as done in \cite{Ishi00}) 
rather than discussing about A-translation inside the original language as done in \cite{Fri78, Koh08}.

\begin{definition}[A-translation \cite{Fri78}]
\label{def: A-translation}
%Let $\vp$ be a $\ha$-formula $\vp$.
%Then its A-translation $\vp^{\PH} $ is defined by induction on the logical structure of 
For a $\ha$-formula $\vp$, we define $\vp^{\PH} $ as a formula obtained from $\vp$ by replacing all the prime formulas $\vp_{\rm p}$ in $\vp$ with $\vp_{\rm p} \lor {\PH} $.
Officially, $\vp^{\PH} $ is defined by induction on the logical structure of $\vp$ as in Definition \ref{def: NT}.
In particular, $\perp^{\PH} :\equiv  \left(\perp \lor \, {\PH} \right)$, which is equivalent to  ${\PH} $ over $\ha^{\PH} $.
In what follows, $\neg_{\PH}\, \vp$ denotes $\vp \to {\PH} $.
\end{definition}

\begin{remark}
\label{rem: dollar-translation preserving FVs}
By induction on the structure of formulas, one can show $\FV{\vp} =\FV{\vp^{\PH}}$ for all $\ha$-formulas $\vp$.
\end{remark}

\begin{proposition}[Cf. Lemma 2 in \cite{Fri78}]
\label{prop: A-translation}
For any $\ha$-formula $\vp$, if $\ha \vdash \vp$, then $\ha^{\PH} \vdash \vp^{\PH} $.
\end{proposition}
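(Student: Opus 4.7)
The plan is to proceed by induction on the length of an $\ha$-derivation of $\vp$, exploiting the key fact that the $\PH$-translation commutes with all logical connectives and quantifiers (this is immediate from Definition \ref{def: A-translation}). Since the translation is defined to touch only the prime formulas, every intuitionistic inference rule is preserved: if a rule concludes $\psi$ from $\psi_1,\dots,\psi_n$, then the same rule concludes $\psi^{\PH}$ from $\psi_1^{\PH},\dots,\psi_n^{\PH}$. Hence the task reduces to checking (i) that the translation of every axiom of $\ha$ is provable in $\ha^{\PH}$, and (ii) that the ex falso axiom $\perp \to \vp$ translates soundly.

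For (ii), the obstacle is that $\perp^{\PH} \equiv \perp \lor {\PH}$ is no longer trivially inconsistent; it is only (via the axiom $\perp \to {\PH}$ of $\ha^{\PH}$) equivalent to ${\PH}$. Thus the first step I would carry out is the following \emph{explosion lemma}:
\[
\ha^{\PH} \vdash {\PH} \to \vp^{\PH} \quad \text{for every $\ha$-formula $\vp$,}
\]
proved by straightforward induction on the logical structure of $\vp$. The base case is the tautology ${\PH} \to (\vp_{\rm p} \lor {\PH})$; the induction steps follow directly from the commutation of $(\cdot)^{\PH}$ with $\land,\lor,\to,\exists,\forall$ together with the induction hypothesis (for $\forall$, note that ${\PH}$ has no free variables, so ${\PH} \to \vp_1^{\PH}(x)$ gives ${\PH} \to \forall x\, \vp_1^{\PH}(x)$). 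Combining this lemma with the axiom $\perp \to {\PH}$ yields $\perp^{\PH} \to \vp^{\PH}$, handling (ii).

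For (i), the non-logical axioms of $\ha$ are: the equality axioms, the axioms for $0$ and the successor, the defining equations of the primitive recursive function symbols, and the induction scheme. The first three groups consist (after prenexing the universal closures) of quantified prime formulas or implications between such. Their $\PH$-translations replace each prime $\vp_{\rm p}$ by $\vp_{\rm p} \lor {\PH}$, and the results are derivable in $\ha^{\PH}$ by routine $\lor$-introduction from the corresponding original axioms of $\ha^{\PH}$ (which still contains all of $\ha$). For the induction scheme on a formula $\vp(x)$, the translation is
\[
\vp(0)^{\PH} \land \forall x\bigl(\vp(x)^{\PH} \to \vp(Sx)^{\PH}\bigr) \to \forall x\, \vp(x)^{\PH},
\]
which is itself an instance of the induction scheme of $\ha^{\PH}$ applied to the formula $\vp(x)^{\PH}$ (note that $\FV{\vp(x)^{\PH}} = \FV{\vp(x)}$ by Remark \ref{rem: dollar-translation preserving FVs}).

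The only conceptually delicate step is the explosion lemma and the handling of the ex falso axiom; everything else is a bookkeeping verification that the syntactic shape of each axiom is preserved by the translation modulo the innocuous addition of $\lor {\PH}$ at the prime-formula level. Once these pieces are assembled, the induction on derivation length closes immediately.
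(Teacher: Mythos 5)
Your proposal is correct and follows essentially the same route as the paper, which proves Proposition \ref{prop: A-translation} simply ``by induction on the length of the derivations''; you have merely supplied the standard details that the paper leaves implicit, in particular the explosion lemma $\ha^{\PH} \vdash {\PH} \to \vp^{\PH}$ needed for the ex falso axiom and the routine verification of the arithmetical axioms and the induction scheme.
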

\begin{proof}
By induction on the length of the derivations.
%(see \cite[Lemma 2]{Fri78} and \cite[Proposition 14.3]{Koh08}).
\end{proof}

\begin{remark}
An analogous assertion of Proposition \ref{prop: A-translation} holds for $\ha +\LEM{\Sigma_1}$ and $\ha^{\PH} +\LEM{\Sigma_1}$ instead of $\ha$ and $\ha^{\PH}$ respectively  (see \cite[Lemma 3.1]{KS14}).
\end{remark}

%For this purpose, we use 
The following substitution result is important in the application of the A-translation:
\begin{lemma}[Cf. Theorem 6.2.4 in \cite{vD13}]
\label{lem: Substitution}
Let $X$ be a set of $\ha$-sentences and $\vp$ be a $\ha^{\PH} $-formula.
If $\ha^{\PH} + X \vdash \vp$, then $\ha + X \vdash \vp[\psi/{\PH}]$ for any $\ha$-formula $\psi$ such that the free variables of $\psi$ are not bounded in $\vp$, where $\vp[\psi/{\PH}]$ is the $\ha$-formula obtained from $\vp$ by replacing all the occurrences of ${\PH}$ in $\vp$ with $\psi$.
\end{lemma}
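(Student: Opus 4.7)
The plan is to prove this by induction on the length of a derivation of $\vp$ in $\ha^{\PH} + X$. The key point is that $\PH$ is a $0$-ary predicate symbol with no logical structure: it occurs only as an atom, and the only axiom of $\ha^{\PH}$ that mentions $\PH$ is $\perp \to \PH$. So a derivation in $\ha^{\PH} + X$ treats $\PH$ as an opaque propositional constant, and substituting $\psi$ for $\PH$ everywhere should yield a correct $\ha$-derivation, provided no free variable of $\psi$ is captured by a quantifier in $\vp$ (or in any intermediate formula of the derivation).

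For the base cases: (i) non-logical axioms of $\ha$ do not mention $\PH$, hence are preserved by the substitution; (ii) the sentences of $X$ are $\ha$-sentences, so again the substitution is the identity on them; (iii) the axiom $\perp \to \PH$ of $\ha^{\PH}$ becomes $\perp \to \psi$, which is derivable in $\ha$ by ex falso; (iv) instances of logical axioms and equality axioms that contain $\PH$ remain instances of the same axioms after replacing $\PH$ with $\psi$. For the inductive step, for each rule of intuitionistic first-order logic one verifies that if the rule applies to the originals, it applies to the substituted premises to yield the substituted conclusion; the propositional rules are trivial because they commute with the purely atomic replacement of $\PH$ by $\psi$.

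The genuinely delicate cases are the quantifier rules and, more broadly, eigenvariable conditions and term-substitution: $\forall$-introduction / $\exists$-elimination require that the eigenvariable not occur free in any relevant side formula, and the instantiation rules require that the substituted term be free for the bound variable. Here is where the hypothesis that the free variables of $\psi$ are not bound in $\vp$ is used: without loss of generality (by renaming bound variables, which can be done uniformly throughout the derivation), we may assume that no bound variable occurring in any formula of the derivation belongs to $\FV{\psi}$. Then substituting $\psi$ for $\PH$ creates no capture, preserves all freshness conditions on eigenvariables, and commutes with the term substitutions used in $\forall$-elimination and $\exists$-introduction.

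The main obstacle to spell out cleanly is precisely the bound-variable management in the induction; the cleanest way to handle it is to first $\alpha$-rename all bound variables of the original derivation so that they are disjoint from $\FV{\psi}$, which is legitimate because $\alpha$-equivalent derivations derive $\alpha$-equivalent formulas, and then to perform the substitution on this renamed derivation. After this preliminary step, the induction on rules is routine, since every logical rule of intuitionistic predicate calculus is stable under uniform replacement of a $0$-ary predicate letter by a formula whose free variables remain free throughout.
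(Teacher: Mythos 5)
Your proposal is correct and follows essentially the same route as the paper: induction on the length of the derivation in $\ha^{\PH} + X$, with the substitution of $\psi$ for the $0$-ary predicate ${\PH}$ checked case by case, the variable condition being needed precisely for the quantifier axioms and rules. Your additional remarks on handling bound-variable renaming and the axiom $\perp \to {\PH}$ only make explicit what the paper leaves as routine.
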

\begin{proof}
Fix a set $X$ of $\ha$-sentences.
By induction on $k$,
%the length of the given proof of $\ha^*+X$, 
one can show straightforwardly that for any $k$ and any $\ha^{\PH} $-formula $\vp$, if $\ha^{\PH} + X \vdash \vp$ with the proof of length $k$, then $\ha + X \vdash \vp[\psi/{\PH} ]$ for any $\ha$-formula $\psi$ such that the free variables of $\psi$ is not bounded in $\vp$.
The variable condition is used to verify the case of axioms and rules for quantifiers.
\end{proof}

The following lemma is a key for our generalized conservation results.
\begin{lemma}
\label{lem: key lemma on vp*A in semi-classical arithemtic}
Let $k$ be a natural number.
\begin{enumerate}
    \item
    \label{item: vp*A <-> vp* v A for vp in Sigma_k}
    For any $\ha$-formula $\vp\in \Sigma_k$, $\ha^{\PH} + \LEM{\Sigma_{k-1}}$ $(\ha^{\PH} $ if $k=0)$ proves $\left( \SN{\vp}\right)^{\PH} \lr \SN{\vp} \lor {\PH} $.
    \item
        \label{item: vp*A <-> vp* v A for vp in Pi_k}
     For any $\ha$-formula $\vp\in \Pi_k$, $\ha^{\PH} + \LEM{\Sigma_{k}}$ proves $\left( \SN{\vp}\right)^{\PH} \lr \SN{\vp} \lor {\PH} $.
\end{enumerate}
\end{lemma}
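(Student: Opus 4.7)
The plan is to prove items 1 and 2 by simultaneous induction on $k$, appealing throughout to the decomposition from Remark~\ref{rem: sigma_k and pi_k}: for $k>0$ a $\Sigma_k$-formula may be taken in the form $\exists x\,\vp_1$ with $\vp_1\in \Pi_{k-1}$ and a $\Pi_k$-formula in the form $\forall x\,\vp_1$ with $\vp_1\in \Sigma_{k-1}$. For the base case $k=0$ the formula $\vp$ is quantifier-free, so $\SN{\vp}\equiv \vp$ and the claim reduces to $\vp^{\PH}\lr \vp\lor {\PH}$, which I prove by a straightforward structural induction on $\vp$. The prime case holds by definition, $\perp^{\PH}\lr \perp\lor {\PH}$ is immediate, and the $\land,\lor$ cases are intuitionistic; the $\vp_1\to\vp_2$ case uses decidability of the quantifier-free antecedent (an instance of $\LEM{\Sigma_0}$, provable in $\ha$) to case-split on $\vp_1$.

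For the induction step of item 1, let $\vp\equiv\exists x\,\vp_1$ with $\vp_1\in\Pi_{k-1}$. Applying the induction hypothesis for item 2 to $\vp_1$ gives $(\SN{\vp_1})^{\PH}\lr \SN{\vp_1}\lor {\PH}$ in $\ha^{\PH}+\LEM{\Sigma_{k-1}}$, and then the intuitionistic equivalence $\exists x(A(x)\lor B)\lr \exists x\,A(x)\lor B$ (for $x$ not free in $B$) yields
\[
(\SN{\vp})^{\PH}\equiv \exists x\,(\SN{\vp_1})^{\PH}\lr \exists x\,\SN{\vp_1}\lor {\PH}\equiv \SN{\vp}\lor {\PH}.
\]

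For the induction step of item 2, let $\vp\equiv\forall x\,\vp_1$ with $\vp_1\in\Sigma_{k-1}$. By unfolding, $(\SN{\vp})^{\PH}\equiv \forall x\,\neg_{\PH}\neg_{\PH}(\SN{\vp_1})^{\PH}$. Applying the induction hypothesis for item 1 to $\vp_1$, together with the intuitionistic equivalence $\neg_{\PH}\neg_{\PH}(A\lor {\PH})\lr \neg_{\PH}\neg_{\PH}A$, reduces this to $\forall x\,\neg_{\PH}\neg_{\PH}\SN{\vp_1}$. Combining Lemma~\ref{lem: vp -> vp*} with Lemma~\ref{lem: equivalence of vp and vp' over semi-classical arithmetic}.\eqref{item: vp'<-> vp in Sigma_k} yields $\SN{\vp_1}\lr \vp_1$ in $\ha^{\PH}+\DNE{\Sigma_{k-1}}$, so this further collapses to $\forall x\,\neg_{\PH}\neg_{\PH}\vp_1$. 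A case analysis on $\vp_1\lor\neg\vp_1$ (an instance of $\LEM{\Sigma_{k-1}}$) then establishes $\neg_{\PH}\neg_{\PH}\vp_1\lr \vp_1\lor {\PH}$: if $\vp_1$, the premise is trivially satisfied; if $\neg\vp_1$, then $\vp_1\to{\PH}$ via $\perp\to{\PH}$, and the premise forces ${\PH}$. Thus $(\SN{\vp})^{\PH}\lr \forall x(\vp_1\lor {\PH})$, while the same equivalences give $\SN{\vp}\lor {\PH}\lr \forall x\,\vp_1\lor {\PH}$.

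The crucial remaining step, which I expect to be the main obstacle, is the equivalence $\forall x\,(\vp_1\lor {\PH})\lr \forall x\,\vp_1\lor {\PH}$. The $\leftarrow$ direction is intuitionistic, but the $\to$ direction is a form of the constant-domain schema and is not intuitionistically valid; this is exactly where the full hypothesis $\LEM{\Sigma_k}$ is consumed. By Lemma~\ref{lem: NP and NS}.\eqref{item: NS}, $\neg\vp_1$ is provably equivalent (in $\ha+\DNE{\Sigma_{k-2}}$) to some $\psi\in\Pi_{k-1}$, so $\exists x\,\neg\vp_1$ is equivalent to a $\Sigma_k$-formula. Invoking $\LEM{\Sigma_k}$ on this, if $\exists x\,\neg\vp_1$ holds then a witness $x_0$ together with $\vp_1(x_0)\lor {\PH}$ and $\neg\vp_1(x_0)$ forces ${\PH}$; otherwise $\forall x\,\neg\neg\vp_1$ holds, and $\DNE{\Sigma_{k-1}}$ yields $\forall x\,\vp_1$. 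This closes item 2 over $\ha^{\PH}+\LEM{\Sigma_k}$ and completes the induction.
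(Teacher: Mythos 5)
Your proof is correct and follows essentially the same route as the paper: simultaneous induction on $k$, the same treatment of the existential case, and in the universal case the same key ingredients (Lemma \ref{lem: vp -> vp*} and Lemma \ref{lem: equivalence of vp and vp' over semi-classical arithmetic} to identify $\SN{\vp_1}$ with $\vp_1$ under $\DNE{\Sigma_{k-1}}$, Lemma \ref{lem: NP and NS}.\eqref{item: NS} to make $\exists x\,\neg\vp_1$ a $\Sigma_k$-formula, and a $\LEM{\Sigma_k}$ case split on it). The only differences are cosmetic: you verify the base case by a direct structural induction rather than reducing to prime formulas via closed terms, and you package the crux as the distribution $\forall x(\vp_1\lor{\PH})\lr\forall x\,\vp_1\lor{\PH}$ instead of arguing directly with the $\neg_{\PH}\neg_{\PH}$-form as the paper does.
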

\begin{proof}
We show the items \ref{item: vp*A <-> vp* v A for vp in Sigma_k} and \ref{item: vp*A <-> vp* v A for vp in Pi_k} simultaneously by induction on $k$.

The base case:
Since every quantifier-free formula $\QF{\vp}$ such that $\FV{\QF{\vp}}=\{\ol{x} \} $ is equivalent to a prime formula $t(\ol{x})=0$ for some closed term $t$ (see e.g. \cite[Proposition 3.8]{Koh08}), by Proposition \ref{prop: A-translation}, it suffices to show the assertions only for prime formulas.
Since
$\left(\SNP{\vp_{\rm p}}\right)^{\PH} \equiv {\vp_{\rm p}}^{\PH} \equiv \vp_{\rm p} \lor {\PH} \equiv \SNP{\vp_{\rm p}} \lor {\PH}$,
we are done.

The induction step:
Assume that the items \ref{item: vp*A <-> vp* v A for vp in Sigma_k} and \ref{item: vp*A <-> vp* v A for vp in Pi_k} hold for $k$.
We first show the item 1 for $k+1$.
Let $\vp_1\in \Pi_k$.
Since
$$\left(\SNP{\exists x \vp_1}\right)^{\PH} \equiv \left( \exists x\, \SNP{\vp_1}\right)^{\PH} \equiv \exists x \left(\SNP{\vp_1}\right)^{\PH}, $$
by induction hypothesis, we have
$$
\ha^{\PH} + \LEM{\Sigma_k} \vdash\left(\SNP{\exists x \vp_1}\right)^{\PH} \lr \exists x \left(\SNP{\vp_1} \lor {\PH} \right).$$
Since $\ha^{\PH} $ proves $\exists x \left(\SNP{\vp_1} \lor {\PH} \right) \lr \left(\exists x \, \SNP{\vp_1} \lor {\PH} \right) \equiv \left( \SNP{\exists x \vp_1 } \lor {\PH} \right)$,
we have
$$
\ha^{\PH} + \LEM{\Sigma_k} \vdash \left(\SNP{\exists x \vp_1}\right)^{\PH} \lr  \left( \SNP{ \exists x \vp_1} \lor {\PH} \right).$$
Thus we have shown the item \ref{item: vp*A <-> vp* v A for vp in Sigma_k} for $k+1$.

Next, we show the item 2 for $k+1$.
Let $\vp_2\in \Sigma_k$.
We shall show that $\ha^{\PH} + \DNE{\Sigma_k}$ (and hence, $\ha^{\PH} + \LEM{\Sigma_{k+1}}$) proves
$\SNP{\forall x \vp_2} \lor {\PH} \to
\left( \SNP{ \forall x \vp_2} \right)^{\PH} $.
By Lemma \ref{lem: equivalence of vp and vp' over semi-classical arithmetic}.\eqref{item: vp'<-> vp in Sigma_k}, we have
\begin{equation}
\label{eq: vp2 <-> NNvp_2*}
\ha+\DNE{\Sigma_k} \vdash \vp_2 \lr \left(\vp_2 \right)^N \equiv \neg \neg \SNP{\vp_2}.
\end{equation}
%Since ${\vp_2}^N \equiv \neg \neg \vp_2^*$, by Lemm \ref{lem: equivalence of vp and vp^N over semi-classical arithmetic}, 
Then we have that
$\ha^{\PH} + \DNE{\Sigma_k}$ proves
$$
\SNP{\forall x \vp_2}\lor {\PH} \equiv  \left(  \forall x \neg \neg \SNP{\vp_2}  \lor {\PH} \right) \lr \forall x \vp_2 \lor {\PH} .
$$
%Since $\vp_2$ is of the prenex normal form
By Lemma \ref{lem: vp -> vp*}, $\ha$ proves $\vp_2 \to \SNP{\vp_2}$.
Then, using induction hypothesis and the fact that $\DNE{\Sigma_{k}}$ derives $\LEM{\Sigma_{k-1}}$, we have that $\ha^{\PH} + \DNE{\Sigma_k}$ proves
$$
\begin{array}{rcl}
\SNP{\forall x \vp_2} \lor {\PH}
%\equiv  \forall x \neg \neg {\vp_2}^* \lor {\PH}
&\underset{\DNE{\Sigma_k}}{\llr}& \forall x \vp_2 \lor {\PH}\\
&\lra &  \forall x \SNP{\vp_2} \lor {\PH}\\[2pt]
& \lra & \forall x (\SNP{\vp_2} \lor {\PH})  \\[2pt]
& \underset{\text{[I.H.] }\LEM{\Sigma_{k-1}}}{\llr} &\forall x \left(\SNP{\vp_2} \right)^{\PH}\\
& \lra & \forall x \left(\left( \left(\SNP{\vp_2} \right)^{\PH} \to {\PH} \right) \to {\PH} \right)\\
&\llr & \left( \SNP{\forall x \vp_2}\right)^{\PH} .
\end{array}
$$
In the following, we show the converse direction:
\begin{equation}
    \label{eq: the direction -> of item 2 for k+1}
\ha^{\PH} + \LEM{\Sigma_{k+1}} \vdash \left( \SNP{\forall x \vp_2} \right)^{\PH}  \to \SNP{\forall x \vp_2} \lor {\PH} .
\end{equation}
Reason in $\ha^{\PH} + \LEM{\Sigma_{k+1}} $.
Suppose $\left( \SNP{\forall x \vp_2 } \right)^{\PH} $, equivalently, \begin{equation}
\label{eq: forall x vp2 * dollar}
\forall x \left(\left( \left(\SNP{\vp_2} \right)^{\PH} \to {\PH} \right) \to {\PH} \right).
\end{equation}
By induction hypothesis,
%we have that
%$\ha^{\PH} + \LEM{\Sigma_{k}}$ proves that
$\eqref{eq: forall x vp2 * dollar} $ is equivalent to  $\forall x \left(\left(\SNP{\vp_2} \lor {\PH}  \to {\PH} \right) \to {\PH} \right)$, which is intuitionistically equivalent to
\begin{equation*}
        \label{eq: forall x dd vp2 *}
\forall x \left(\left(\SNP{\vp_2} \to {\PH} \right)\to {\PH} \right) .
\end{equation*}
Then we have
\begin{equation}
    \label{eq: exists x N vp2* -> d}
\exists x \neg \SNP{\vp_2} \to {\PH}  .
\end{equation}
By Lemma \ref{lem: NP and NS}.\eqref{item: NS}, there exists $\psi_2 \in \Pi_k$ such that $\FV{\vp_2}=\FV{\psi_2}$ and $\neg \vp_2$ is equivalent to $\psi_2$.
Since $\exists x \psi_2\in \Sigma_{k+1}$, by $\LEM{\Sigma_{k+1}}$, we have %$\ha + \LEM{\Sigma_{k+1}}$ proves 
$\exists x \psi_2 \lor \neg \exists x \psi_2$, and hence, 
\begin{equation*}
\exists x \neg \vp_2 \lor \forall x \neg \neg \vp_2 .
\end{equation*}
Then, by \eqref{eq: vp2 <-> NNvp_2*}, we obtain
\begin{equation*}
\exists x \neg \SNP{\vp_2} \lor \forall x \neg \neg \SNP{\vp_2} .
\end{equation*}
In the former case, we have ${\PH}$ by \eqref{eq: exists x N vp2* -> d}.
In the latter case, we have $\SNP{\forall x {\vp_2}}$.
Thus we have shown \eqref{eq: the direction -> of item 2 for k+1}.
\end{proof}

\begin{lemma}
\label{lem: basic facts on *}
%[Cf. Lemma 2 in \cite{Ishi00}]
Let $\vp$ be a $\ha^{\PH} $-formula.
\begin{enumerate}
    \item 
    \label{item: vp -> ddvp}
$\ha^{\PH} \vdash \vp \to \neg_{\PH} \neg_{\PH} \vp$.
\item
\label{item: Advp <-> dEvp}
$\ha^{\PH} \vdash \forall x \neg_{\PH} \vp \lr \neg_{\PH} \exists x \vp$.
\item
\label{item: dddvp -> dvp}
$\ha^{\PH} \vdash  \neg_{\PH} \neg_{\PH} \neg_{\PH} \vp \to \neg_{\PH} \vp$.
    \item
    \label{item: Eddvp -> ddEvp}
    $\ha^{\PH} \vdash \exists x \neg_{\PH} \neg_{\PH} \vp \to \neg_{\PH} \neg_{\PH} \exists  x \vp$.
\end{enumerate}
\end{lemma}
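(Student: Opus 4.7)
My plan is to observe that items 1–4 are exactly the familiar intuitionistic tautologies $\vp \to \neg \neg \vp$, $\forall x \neg \vp \lr \neg \exists x \vp$, $\neg \neg \neg \vp \to \neg \vp$ and $\exists x \neg \neg \vp \to \neg \neg \exists x \vp$, with the falsity constant $\perp$ systematically replaced by the atomic formula $\PH$. The standard Hilbert-style derivations of these four tautologies use only $\to$-introduction/elimination and the quantifier rules; in particular they do not appeal to the ex falso rule $\perp \to \psi$. Consequently the same derivations transfer verbatim to $\ha^{\PH}$ once $\neg$ is read as $\neg_{\PH}$, without using the axiom $\perp \to \PH$ of $\ha^{\PH}$. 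So the strategy is simply to write out these routine derivations.

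For concreteness, for item 1 I would assume $\vp$ and $\neg_{\PH} \vp$; applying the latter to the former gives $\PH$, and two implication introductions yield $\vp \to \neg_{\PH}\neg_{\PH} \vp$. For item 2, both directions reduce to an $\exists$-elimination paired with modus ponens: from $\forall x \neg_{\PH} \vp$ and $\exists x \vp$ fix a witness $x_0$, specialize to obtain $\vp(x_0) \to \PH$, and apply modus ponens to $\vp(x_0)$; conversely, from $\neg_{\PH} \exists x \vp$ and $\vp(x)$, use $\exists$-introduction to get $\exists x \vp$ and then modus ponens.

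For item 3 I would use item 1 applied to $\vp$: assuming $\neg_{\PH}\neg_{\PH}\neg_{\PH} \vp$ and $\vp$, item 1 yields $\neg_{\PH}\neg_{\PH} \vp$, and combining with the first hypothesis produces $\PH$, so discharging $\vp$ gives $\neg_{\PH} \vp$. For item 4 I would assume $\exists x \neg_{\PH}\neg_{\PH} \vp$ and $\neg_{\PH} \exists x \vp$; fix $x_0$ with $\neg_{\PH}\neg_{\PH} \vp(x_0)$; from $\vp(x_0)$ one would obtain $\exists x \vp$ by $\exists$-introduction and then $\PH$ by modus ponens, so $\neg_{\PH} \vp(x_0)$ holds, and combining it with $\neg_{\PH}\neg_{\PH} \vp(x_0)$ delivers $\PH$.

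I do not anticipate any real obstacle here: the whole lemma is a packaging of four routine intuitionistic derivations. The only point worth emphasizing in the writeup is that the axiom $\perp \to \PH$ is never invoked, so the results are genuinely facts of intuitionistic predicate logic with $\PH$ in the role of the falsity constant, which is precisely the flavor needed for the A-translation arguments in the preceding material.
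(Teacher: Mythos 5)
Your proposal is correct and matches the paper's treatment: the paper simply declares items 1--3 immediate from the definition of $\neg_{\PH}$ and obtains item 4 from them, while you write out the same routine intuitionistic derivations (doing item 4 directly rather than by citing items 1--3, which amounts to the same argument). Your added observation that the axiom $\perp \to \PH$ is never used is accurate, though not needed for the lemma as stated.
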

\begin{proof}
\eqref{item: vp -> ddvp}, \eqref{item: Advp <-> dEvp} and \eqref{item: dddvp -> dvp} are immediate from the definition of $\neg_{\PH} $ (see Definition \ref{def: A-translation}).
\eqref{item: Eddvp -> ddEvp} follows from \eqref{item: vp -> ddvp}, \eqref{item: Advp <-> dEvp} and \eqref{item: dddvp -> dvp}.
\end{proof}

\begin{lemma}
\label{lem: vp -> vp'A for any vp:PNF}
For any $\ha$-formula $\vp$ of the prenex normal form, $\ha^{\PH} \vdash \vp \to \left(\vp^N \right)^{\PH} $.
\end{lemma}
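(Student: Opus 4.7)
The plan is to prove the claim by induction on the quantifier structure of $\vp$, exploiting that for a prenex $\vp$ we have
$(\vp^N)^{\PH} \equiv (\neg\neg \SN{\vp})^{\PH}$, and by unwinding Definition \ref{def: A-translation} (using that $\perp^{\PH}$ is equivalent to ${\PH}$ over $\ha^{\PH}$) this is equivalent to $\neg_{\PH}\neg_{\PH}(\SN{\vp})^{\PH}$. So the goal reduces to establishing
\[
\ha^{\PH}\vdash \vp \to \neg_{\PH}\neg_{\PH}\,(\SN{\vp})^{\PH}
\]
for every prenex $\vp$, which is what I would prove directly by induction on the construction of $\vp$ as a prenex formula (i.e.\ first on the outermost quantifier, then on the quantifier-free matrix).

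For the base case, $\vp$ is quantifier-free, so $\SN{\vp}\equiv\vp$ by Definition \ref{def: NT}. First I would show, by a routine induction on the propositional structure of a QF formula, that $\ha^{\PH}\vdash \vp\to \vp^{\PH}$ (the A-translation only weakens prime formulas into disjunctions with ${\PH}$). Combined with Lemma \ref{lem: basic facts on *}.\eqref{item: vp -> ddvp}, which gives $\vp^{\PH}\to \neg_{\PH}\neg_{\PH}\vp^{\PH}$, this yields $\vp\to \neg_{\PH}\neg_{\PH}\vp^{\PH} \equiv \neg_{\PH}\neg_{\PH}(\SN{\vp})^{\PH}$ as required.

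For the inductive step, there are two cases depending on the outermost quantifier of the prenex form. If $\vp\equiv\forall x\,\psi$ with $\psi$ still prenex, then $\SN{\vp}\equiv \forall x\,\neg\neg \SN{\psi}$, so $(\SN{\vp})^{\PH} \equiv \forall x\,\neg_{\PH}\neg_{\PH}(\SN{\psi})^{\PH}$. The induction hypothesis applied to $\psi$ gives $\psi\to \neg_{\PH}\neg_{\PH}(\SN{\psi})^{\PH}$, which universally generalizes to $\forall x\,\psi \to \forall x\,\neg_{\PH}\neg_{\PH}(\SN{\psi})^{\PH}$, and then one more application of Lemma \ref{lem: basic facts on *}.\eqref{item: vp -> ddvp} puts $\neg_{\PH}\neg_{\PH}$ in front. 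If $\vp\equiv\exists x\,\psi$ with $\psi$ prenex, then $(\SN{\vp})^{\PH}\equiv \exists x\,(\SN{\psi})^{\PH}$; the induction hypothesis gives $\psi\to \neg_{\PH}\neg_{\PH}(\SN{\psi})^{\PH}$, so existentially we get $\exists x\,\psi \to \exists x\,\neg_{\PH}\neg_{\PH}(\SN{\psi})^{\PH}$, and Lemma \ref{lem: basic facts on *}.\eqref{item: Eddvp -> ddEvp} then supplies $\exists x\,\neg_{\PH}\neg_{\PH}(\SN{\psi})^{\PH}\to \neg_{\PH}\neg_{\PH}\exists x\,(\SN{\psi})^{\PH}$, which is exactly what we need.

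There is no serious obstacle: the only subtlety to record carefully is the equivalence $(\neg A)^{\PH}\llr \neg_{\PH}(A^{\PH})$ under $\ha^{\PH}$ (coming from $\perp^{\PH}\llr{\PH}$ using the axiom $\perp\to{\PH}$), which is what lets us identify $(\vp^N)^{\PH}$ with $\neg_{\PH}\neg_{\PH}(\SN{\vp})^{\PH}$ and thereby invoke the parts of Lemma \ref{lem: basic facts on *} cleanly. Everything else is a bookkeeping induction, and no semi-classical principle is needed here since the work of absorbing ${\PH}$'s and double negations is done purely intuitionistically.
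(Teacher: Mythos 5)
Your overall skeleton is the paper's: identify $(\vp^N)^{\PH}$ with $\neg_{\PH}\neg_{\PH}(\SN{\vp})^{\PH}$ (using $\perp^{\PH}\lr\PH$), and induct on the quantifier prefix using Lemma \ref{lem: basic facts on *}.\eqref{item: vp -> ddvp} for $\forall$ and \eqref{item: Eddvp -> ddEvp} for $\exists$; those steps are fine. The genuine gap is your base case. You assert that $\ha^{\PH}\vdash \QF{\vp}\to\QF{\vp}^{\PH}$ for quantifier-free $\QF{\vp}$ by ``a routine induction on the propositional structure'' because ``the A-translation only weakens prime formulas into disjunctions with $\PH$''. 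That justification is wrong: prime formulas also occur negatively (in antecedents of $\to$), where replacing $\vp_{\rm p}$ by $\vp_{\rm p}\lor\PH$ strengthens rather than weakens. Concretely, with only $\vp_i\to\vp_i^{\PH}$ as induction hypothesis, the case $\vp_1\to\vp_2$ does not go through: from $\vp_1\to\vp_2$ and $\vp_1^{\PH}$ you cannot reach $\vp_2^{\PH}$ without something like $\vp_1^{\PH}\to\vp_1\lor\PH$. Indeed $A\to A^{\PH}$ is not even intuitionistically valid propositionally: for $A\equiv(p\to q)\to r$ one has $A^{\PH}\equiv((p\lor\PH)\to(q\lor\PH))\to(r\lor\PH)$, and the two-node Kripke model whose root forces nothing and whose upper node forces exactly $p$ and $\PH$ refutes $A\to A^{\PH}$ at the root. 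So the base case is not pure logic; it needs an arithmetical ingredient, which your argument never invokes.

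The needed fact is true in $\ha^{\PH}$, and there are two standard repairs. Either strengthen the quantifier-free claim to the simultaneous statement $\ha^{\PH}\vdash \QF{\vp}^{\PH}\lr\QF{\vp}\lor\PH$ (together with the auxiliary fact $\ha^{\PH}\vdash\PH\to\psi^{\PH}$ for all $\psi$), whose implication case is settled by the $\ha$-provable decidability of quantifier-free formulas; or do what the paper does: since every quantifier-free formula is $\ha$-provably equivalent to a prime formula $t(\ol{x})=0$, use Proposition \ref{Soundness of NT} and Proposition \ref{prop: A-translation} to reduce to prenex formulas with prime matrix, for which the base case is just the trivial chain $\vp_{\rm p}\to\vp_{\rm p}\lor\PH\to\neg_{\PH}\neg_{\PH}\left(\vp_{\rm p}\lor\PH\right)$. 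With either repair, the remainder of your proof coincides with the paper's.
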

\begin{proof}
Since there exists a closed term $t$ such that $\ha \vdash \QF{\vp}(x_1, \dots, x_k) \lr t(x_1, \dots, x_k) =0 $ for each quantifier-free formula $\QF{\vp}$ such that $\FV{\QF{\vp}}=\{x_1, \dots, x_k \}$ (see e.g. \cite[Proposition 3.8]{Koh08}), by Proposition \ref{Soundness of NT} and Proposition \ref{prop: A-translation},
one can assume that formulas of the prenex normal form consist of the formulas of form $Q_{1}x_1 \dots Q_{k}x_k \, \vp_{\rm p} $ where $Q_i$s are quantifiers and $\vp_{\rm p}$ is prime.
We show our assertion by induction on the structure of formulas of this form.

For a prime formula $\vp_{\rm p}$, it is trivial to see that $\ha^{\PH} $ proves
$$
\vp_{\rm p} \to \vp_{\rm p} \lor {\PH} \to  \neg_{\PH} \neg_{\PH} \left(  \vp_{\rm p} \lor {\PH}  \right) \lr  \left(\left(\vp_{\rm p}\right)^N \right)^{\PH}.
%\vp_{\rm p} \to \neg \neg \vp_{\rm p} \to \to \neg \neg \left(\vp_{\rm p} \lor {\PH} \right) \equiv   \left({\vp_{\rm p}}^N \right)^{\PH}.
$$

Assume the assertion for $\vp$.
Then, using Lemma \ref{lem: basic facts on *},
$\ha^{\PH}$ proves
$$
\begin{array}{r}
\exists x \vp 
\underset{\text{[I.H.]}}{\lra}
\exists x \left(\vp^N \right)^{\PH} 
\equiv
\exists x \left(\neg \neg \SN{\vp} \right)^{\PH}
\lr
\exists x \neg_{\PH} \neg_{\PH} \left(\SN{\vp} \right)^{\PH} 
\to
\neg_{\PH} \neg_{\PH} \exists x  \left(\SN{\vp} \right)^{\PH} \\
\lr
\left(\left( \exists x \vp \right)^N \right)^{\PH}
\end{array}
$$
and
$$
\begin{array}{r}
\forall x \vp 
\underset{\text{[I.H.]}}{\lra}
\forall x \left(\vp^N \right)^{\PH} 
\equiv
\forall x \left(\neg \neg \SN{\vp} \right)^{\PH}
\lr
\forall x \neg_{\PH} \neg_{\PH} \left(\SN{\vp} \right)^{\PH} 
\to \neg_{\PH} \neg_{\PH} \forall x \neg_{\PH} \neg_{\PH} \left(\SN{\vp} \right)^{\PH} \\
\lr
\left(\left( \forall x \vp \right)^N \right)^{\PH} .
\end{array}
$$
\end{proof}

\begin{theorem}
\label{thm: conservation results for PNF -> Pi_k+2}
Let $k$ be a natural number.
For any formulas $\vp \in \Pi_{k+2}$ and $\psi$ of the prenex normal form, if $\pa \vdash \psi \to \vp$, then $\ha + \LEM{\Sigma_k} \vdash \psi \to \vp$.
\end{theorem}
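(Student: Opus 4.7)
The plan is to apply Kuroda's negative translation (Proposition \ref{Soundness of NT}) followed by Friedman's A-translation (Proposition \ref{prop: A-translation} together with the substitution Lemma \ref{lem: Substitution}), using Lemma \ref{lem: key lemma on vp*A in semi-classical arithemtic} as the bridge that converts $\LEM{\Sigma_k}$ into the right amount of classical strength in the $\PH$-translated world. By Remark \ref{rem: sigma_k and pi_k}, write $\vp \equiv \forall x \exists y\, \rho(x,y,\ol{z})$ with $\rho \in \Pi_k$, where $\ol{z}$ lists the free variables.

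In the first phase, I would start with $\pa \vdash \psi \to \vp$ and apply Proposition \ref{Soundness of NT} to obtain $\ha \vdash (\psi \to \vp)^N$, which is intuitionistically equivalent to $\SN{\psi} \to \neg\neg\SN{\vp}$. Applying the A-translation (Proposition \ref{prop: A-translation}) yields $\ha^{\PH} \vdash (\SN{\psi})^{\PH} \to \neg_{\PH}\neg_{\PH}(\SN{\vp})^{\PH}$. Since $\psi$ is in prenex normal form, Lemma \ref{lem: vp -> vp'A for any vp:PNF} gives $\ha^{\PH} \vdash \psi \to (\psi^N)^{\PH} \equiv \neg_{\PH}\neg_{\PH}(\SN{\psi})^{\PH}$; chaining these and collapsing the resulting quadruple $\neg_{\PH}$ to a double using Lemma \ref{lem: basic facts on *}.\eqref{item: dddvp -> dvp} produces $\ha^{\PH} \vdash \psi \to \neg_{\PH}\neg_{\PH}(\SN{\vp})^{\PH}$.

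In the second phase, I would simplify $(\SN{\vp})^{\PH} \equiv \forall x\,\neg_{\PH}\neg_{\PH}\,\exists y\,(\SN{\rho})^{\PH}$. Lemma \ref{lem: key lemma on vp*A in semi-classical arithemtic}.\eqref{item: vp*A <-> vp* v A for vp in Pi_k} applied to $\rho \in \Pi_k$ gives $(\SN{\rho})^{\PH} \lr \SN{\rho} \lor {\PH}$ over $\ha^{\PH} + \LEM{\Sigma_k}$. The intuitionistic equivalences $\exists y(A \lor {\PH}) \lr \exists y A \lor {\PH}$ and $\neg_{\PH}\neg_{\PH}(A \lor {\PH}) \lr \neg_{\PH}\neg_{\PH} A$, together with $\SN{\rho} \lr \rho$ over $\ha + \DNE{\Sigma_{k-1}}$ (Lemma \ref{lem: equivalence of vp and vp' over semi-classical arithmetic}.\eqref{item: vp'<-> vp in Pi_k}, and $\LEM{\Sigma_k}$ implies $\DNE{\Sigma_{k-1}}$), collapse $(\SN{\vp})^{\PH}$ to $\forall x\,\neg_{\PH}\neg_{\PH}\,\exists y\,\rho(x,y,\ol{z})$ over $\ha^{\PH} + \LEM{\Sigma_k}$.

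The third phase is the Friedman substitution. Combining the two phases and commuting $\neg_{\PH}\neg_{\PH}$ past $\forall x$ (the intuitionistic schema $\neg_{\PH}\neg_{\PH}\forall x\,X \to \forall x\,\neg_{\PH}\neg_{\PH}X$, again with a $\neg_{\PH}$-collapse), one obtains $\ha^{\PH} + \LEM{\Sigma_k} \vdash \psi \to \forall x\,\neg_{\PH}\neg_{\PH}\,\exists y\,\rho(x,y,\ol{z})$. Stripping $\forall x$ with a fresh variable $x_0$ and then invoking Lemma \ref{lem: Substitution} with $\PH := \exists y\,\rho(x_0,y,\ol{z})$ (after alpha-renaming bound variables in the formula to avoid capture; $\psi$ and $\LEM{\Sigma_k}$ contain no $\PH$ and are unaffected), the consequent becomes $(\exists y\,\rho \to \exists y\,\rho) \to \exists y\,\rho$, which is intuitionistically equivalent to $\exists y\,\rho(x_0,y,\ol{z})$. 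Hence $\ha + \LEM{\Sigma_k} \vdash \psi \to \exists y\,\rho(x_0,y,\ol{z})$, and $\forall$-introduction on the fresh $x_0$ yields $\ha + \LEM{\Sigma_k} \vdash \psi \to \vp$. The main obstacle is the second phase: a crude application of Lemma \ref{lem: key lemma on vp*A in semi-classical arithemtic} directly to $\vp \in \Pi_{k+2}$ would demand $\LEM{\Sigma_{k+2}}$; the key observation is that the outer $\forall x\,\neg\neg\exists y$ produced by Kuroda's translation already absorbs two quantifier alternations via the $\neg_{\PH}\neg_{\PH}(A \lor \PH) \lr \neg_{\PH}\neg_{\PH} A$ identity, so Lemma \ref{lem: key lemma on vp*A in semi-classical arithemtic} needs to be invoked only at the $\Pi_k$-matrix $\rho$.
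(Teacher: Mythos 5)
Your proposal is correct and follows essentially the same route as the paper's proof: Kuroda's negative translation, then the $\PH$-version of Friedman's A-translation, with Lemma \ref{lem: vp -> vp'A for any vp:PNF} recovering $\psi$, Lemma \ref{lem: key lemma on vp*A in semi-classical arithemtic} invoked only at the $\Pi_k$-matrix, and Lemma \ref{lem: Substitution} with $\PH:=\exists y\,\rho$ to conclude. The only differences are cosmetic reorderings (the paper strips $\forall x$ before the A-translation and rewrites $\SN{\rho}$ back to $\rho$ after the substitution, avoiding your quadruple-$\neg_{\PH}$ collapse), and note that the direction $\SN{\rho}\to\rho$ you need follows from Lemma \ref{lem: equivalence of vp and vp' over semi-classical arithmetic}.\eqref{item: vp'<-> vp in Pi_k} together with $\SN{\rho}\to\neg\neg\SN{\rho}$, exactly as in the paper.
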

\begin{proof}
Let $\vp :\equiv \forall x \exists y \vp_1$ where $\vp_1\in \Pi_k$.  
Since one can freely replace the bound variables, assume that the free variables of $ \exists y \vp_1 $ are not bounded in $\psi$ and $x$ does not occur in $\psi$ without loss of generality.

Suppose $\pa \vdash \psi \to \forall x \exists y \vp_1$.
%By the negative translation (see e.g. \cite[Theorem 1.10.11]{Tro73}),
By Proposition \ref{Soundness of NT}, we have that
$\ha$ proves $\neg \neg (\SN{\psi} \to \forall x \neg \neg \exists y\, \SNP{\vp_1})$, which is intuitionistically equivalent to $\neg \neg \SN{\psi} \to  \forall x  \neg \neg \exists y\, \SNP{\vp_1}$, namely, $\psi^N \to  \forall x  \neg \neg \exists y \, \SNP{\vp_1}$.
Then we have
$$
\ha \vdash \psi^N \to   \neg \neg \exists y \, \SNP{\vp_1} .
$$
By Proposition \ref{prop: A-translation}, we have
$$
\ha^{\PH} \vdash \left(\psi^N\right)^{\PH} \to   \neg_{\PH} \neg_{\PH} \exists y \left( \SNP{\vp_1}\right)^{\PH} ,
$$
and hence,
$$
\ha^{\PH} \vdash \psi \to  \neg_{\PH} \neg_{\PH} \exists y \left( \SNP{\vp_1}\right)^{\PH}
$$
by Lemma \ref{lem: vp -> vp'A for any vp:PNF}.
Then, by Lemma \ref{lem: key lemma on vp*A in semi-classical arithemtic}.\eqref{item: vp*A <-> vp* v A for vp in Pi_k}, we have that
$\ha^{\PH} +\LEM{\Sigma_k} $ proves
$$\psi \to  \neg_{\PH} \neg_{\PH} \exists y \left( \SNP{\vp_1} \lor {\PH} \right),$$
 which is intuitionistically equivalent to 
 $$\psi \to  \neg_{\PH} \neg_{\PH} \exists y \, \SNP{\vp_1} .$$
Since the free variables of $ \exists y \vp_1 $ are not bounded in $\psi$, using Lemma \ref{lem: Substitution} with Remark \ref{rem: NT preserving FVs}, we have
 \begin{equation}
\label{eq: cons afrer applying A-translation}
\ha +\LEM{\Sigma_k} \vdash \psi \to \left( (\exists y \, \SNP{\vp_1} \to \exists y \vp_1) \to \exists y \vp_1\right).
 \end{equation}
On the other hand, by Lemma \ref{lem: equivalence of vp and vp' over semi-classical arithmetic}.\eqref{item: vp'<-> vp in Pi_k} and the fact that $ \LEM{\Sigma_k}$ derives $\DNE{\Sigma_k}$, we have that $\ha +\LEM{\Sigma_k}$ proves
\begin{equation*}
\label{eq: HA + Sk-1-DNE |- vp* -> vp}
 \SNP{\vp_1} \to \neg \neg \SNP{\vp_1} \equiv \left(\vp_1\right)^N \underset{ \DNE{\Sigma_{k-1}}}{\llr}  \vp_1.
\end{equation*}
and hence, $\exists y \, \SNP{\vp_1} \to \exists y \vp_1$.
Then, by \eqref{eq: cons afrer applying A-translation}, we have $\ha +\LEM{\Sigma_k} \vdash \psi \to \exists y \vp_1$.
By our assumption, $x$ does not occur in $\psi$, and hence, $\ha +\LEM{\Sigma_k} \vdash \psi \to \forall x \exists y \vp_1 $ follows.
\end{proof}

%\begin{remark}
We have shown Theorem \ref{thm: conservation results for PNF -> Pi_k+2} in order to prove the optimality of our prenex normal form theorems in Section \ref{sec: PNFT} (see Section \ref{sec: Optimality}).
On the other hand, the conservation result on semi-classical arithmetic itself is interesting.
This will be studied comprehensively in \cite{FK20-2}.
%In fact, Theorem \ref{thm: conservation results for PNF -> Pi_k+2} is a generalization of the well-known result that $\pa$ is $\Pi_2$-conservative over $\ha$, which is usually shown by using the negative translation followed by A-translation.
%\end{remark}

%The following proposition asserts that Theorem \ref{thm: conservation results for PNF -> Pi_k+2} is optimal:

%\begin{theorem}
%\label{thm: conservativity}
%For each formula $\vp \in \Pi_{k+2}$, if $\pa \vdash \vp$, then $\ha + \LEM{\Sigma_k} \vdash \vp$.
%\end{theorem}
%\begin{proof}
%\TBR
%\end{proof}

\section{Characterizations}\label{sec: Optimality}
%From the perspective of Remark \ref{rem: On the verification theory}, 
\begin{notation}
Let $\T $ be an extension of $\ha$.
%\subseteq \pa$.
Let $\Gamma$ and $\Gamma'$ be classes of $\ha$-formulas.
%\begin{itemize}
 %   \item 
  Then  $\PNFT{\Gamma}{\Gamma'} $ denotes the following statement:
%  \begin{center}
  for any $\varphi \in \Gamma$, there exists $\varphi' \in \Gamma'$ such that $\FV{\vp}=\FV{\vp'}$ and
    $\T  \vdash \varphi \leftrightarrow \varphi' $.
%\end{center}
%\end{itemize}
\end{notation}

Under this notation, Theorem \ref{thm: PNFT} asserts (modulo Remark \ref{rem: sigma_k and pi_k}) that
%\begin{enumerate}
 %   \item 
for a semi-classical theory $T$ containing 
 %$\ha +\DNE{\Sigma_k}+\DNS{\U_k^+}$,
%(resp. 
$\ha +  \DNE{(\Pi_k\lor \Pi_k)} $, 
%$\PNFT{\E_{k'}}{\Sigma_{k'}}$
%(resp. 
$\PNFT{\U_{k'}}{\Pi_{k'}}$ 
holds for all $k' \leq k$ as well as the analogous assertion for $\E_k$ and $\Sigma_k$.
%$\ha +  \DNE{(\Pi_k\lor \Pi_k)} $ and $\PNFT{\U_{k'}}{\Pi_{k'}}$. 
%\end{enumerate}
It is natural to ask whether the verification theories
%of our prenex normal form theorem (Theorem \ref{thm: PNFT}) 
are optimal.
%(cf. Remark \ref{rem: On the verification theory}).
In this section, among other things (see Table \ref{table: PNFT}), we show that this is exactly the case:
%For this purpose, we introduce the following notation.
%In what follows, among other things, we show the following:
\begin{enumerate}
    \item 
    For a theory $\T$ in-between $\ha $ and $\pa$,
%n extension of $\ha$ and  subtheory of $\pa$.
 $\T \vdash \DNE{(\Pi_k\lor \Pi_k)}$ if and only if $\PNFT{\U_{k'}}{\Pi_{k'}}$ for all $k' \leq k$. (Theorem \ref{thm: characterization of PNFT(Ukp,Pk)})
\item
 For a theory $\T$ in-between $\ha +\LEM{\Pi_{k-1}} $ ($\ha$ if $k=0$) and $\pa$,
%n extension of $\ha$ and  subtheory of $\pa$.
 $\T \vdash \DNE{\Sigma_k} + \DNS{\U_k^+} $ if and only if $\PNFT{\E_{k'}}{\Sigma_{k'}}$ for all $k' \leq k$.
 (Theorem \ref{thm: characterization of PNFT(Ek,Sk)})
\end{enumerate}
%These assertions show that the verification theories of the items \ref{eq: item for Sigma_k in PNFT} and \ref{eq: item for Pi_k in PNFT} in Theorem \ref{thm: PNFT} are optimal.
%In this section, we show the optimality of our prenex normal form theorem.

%For this purpose, we first show some lemmata.

\begin{lemma}
\label{lem: PNFT(Uk, Ok) + Sk-1-LEM => PkPk-DNE}
Let $\T $ be a theory in-between $\ha +\LEM{\Sigma_{k-2}}\, (\ha$ if $k<2)$ and $\pa$.
If $\PNFT{\U_{k}}{\Pi_{k}}$, then $\T \vdash \DNE{(\Pi_{k}\lor \Pi_{k})}$.
\end{lemma}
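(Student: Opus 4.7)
The plan is to use the assumed $\PNFT{\U_{k}}{\Pi_{k}}$ twice: once on a generic disjunction $\vp_1 \lor \vp_2$ with $\vp_i \in \Pi_k$, to reduce $\DNE{(\Pi_k\lor \Pi_k)}$ to $\DNE{\Pi_k}$ inside $\T$; and a second time on the formulas $\rho \lor \neg\rho$ for $\rho \in \Sigma_{k-1}$, coupling with the $\Pi_k$-conservativity of $\pa$ over $\ha + \LEM{\Sigma_{k-2}}$ from Theorem \ref{thm: conservation results for PNF -> Pi_k+2} to derive $\LEM{\Sigma_{k-1}}$ in $\T$, from which $\DNE{\Pi_k}$ follows.

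For the first use of PNFT, fix $\vp_1, \vp_2 \in \Pi_k$. Since $\Pi_k \subseteq \U_k^+$ and $\U_k^+$ is closed under $\lor$ by Lemma \ref{lem: basic facts on our classes}.\eqref{item: EU or}, the disjunction $\vp_1 \lor \vp_2$ lies in $\U_k^+$; by Remark \ref{rem: Ekp and Ukp} I may first pass to an $\ha$-equivalent formula in $\U_k$ and then apply the hypothesis to obtain $\vp' \in \Pi_k$ with $\T \vdash (\vp_1 \lor \vp_2) \lr \vp'$. Thus the goal $\neg\neg(\vp_1 \lor \vp_2) \to (\vp_1 \lor \vp_2)$ reduces to $\T \vdash \neg\neg\vp' \to \vp'$, and it suffices to establish $\T \vdash \DNE{\Pi_k}$.

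For the second use, I aim to derive $\T \vdash \LEM{\Sigma_{k-1}}$; then $\DNE{\Sigma_{k-1}}$ is immediate intuitionistically, and Lemma \ref{lem: k -> k-1}.\eqref{item: sk-DNE -> pk+1-DNE} upgrades this to $\DNE{\Pi_k}$. Given $\rho(x) \in \Sigma_{k-1}$, the formula $\rho \lor \neg\rho$ has degree $k-1$ and hence sits in $\F_{k-1} \subseteq \U_k^+$; applying PNFT (via Remark \ref{rem: Ekp and Ukp}) produces $\vp'_\rho(x) \in \Pi_k$ with $\FV{\vp'_\rho} = \FV{\rho}$ and $\T \vdash (\rho \lor \neg\rho) \lr \vp'_\rho$. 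The universal closure $\forall x\, \vp'_\rho(x)$ still lies in $\Pi_k$ (absorbing the fresh $\forall$ into the existing $\Pi_k$-prefix as in Remark \ref{rem: sigma_k and pi_k}) and is provable in $\pa \supseteq \T$, so Theorem \ref{thm: conservation results for PNF -> Pi_k+2}, applied with its index set to $k-2$ when $k \geq 2$ (or to $0$ when $k = 1$, using $\Pi_1 \subseteq \Pi_2$ from Remark \ref{rem: identification on Sigma_k and Pi_k}), pulls it back to $\ha + \LEM{\Sigma_{k-2}} \subseteq \T$. Composing with the PNFT equivalence yields $\T \vdash \forall x(\rho(x) \lor \neg\rho(x))$, completing the derivation of $\LEM{\Sigma_{k-1}}$.

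The main delicacy is the low-$k$ bookkeeping: for $k = 0$ the target $\DNE{(\Pi_0 \lor \Pi_0)}$ is decidable and the lemma is vacuous, while for $k = 1$ the hypothesis collapses to plain $\ha$ and $\DNE{\Pi_1}$ is intuitionistically provable, so the conservativity machinery carries genuine content only for $k \geq 2$. A secondary point is to verify that the PNF-antecedent hypothesis of Theorem \ref{thm: conservation results for PNF -> Pi_k+2} is met by taking a trivial $\psi$ such as $0 = 0$, so that the unconditional $\pa$-provability of $\forall x\, \vp'_\rho(x)$ fits the $\psi \to \vp$ template demanded by that theorem.
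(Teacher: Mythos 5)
Your argument is correct, and it reaches the conclusion by a different decomposition than the paper, although both rest on the same two ingredients: the hypothesis $\PNFT{\U_{k}}{\Pi_{k}}$ and the conservation result (Theorem \ref{thm: conservation results for PNF -> Pi_k+2}) over $\ha+\LEM{\Sigma_{k-2}}$. The paper applies the prenex hypothesis \emph{twice}, to $\vp_1\lor\vp_2$ and to $\neg\neg(\vp_1\lor\vp_2)$ (both in $\U_k$), obtaining $\rho,\rho'\in\Pi_k$ with $\T\vdash\rho\lr\vp_1\lor\vp_2$ and $\T\vdash\rho'\lr\neg\neg(\vp_1\lor\vp_2)$; since $\pa\vdash\rho'\to\rho$, Theorem \ref{thm: conservation results for PNF -> Pi_k+2} (at index $k-2$) pulls this implication down to $\ha+\LEM{\Sigma_{k-2}}\subseteq\T$, and composing the equivalences gives the instance of $\DNE{(\Pi_k\lor\Pi_k)}$ directly, with no detour through $\LEM{}$ or $\DNE{\Pi_k}$. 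You instead apply the hypothesis only to the disjunction, reduce the goal to $\T\vdash\DNE{\Pi_k}$, and then bootstrap $\LEM{\Sigma_{k-1}}$ inside $\T$ by applying the hypothesis to $\rho\lor\neg\rho\in\F_{k-1}\subseteq\U_k^+$ (via Remark \ref{rem: Ekp and Ukp}) and conserving the resulting $\Pi_k$ formula, finishing with $\LEM{\Sigma_{k-1}}\Rightarrow\DNE{\Sigma_{k-1}}\Rightarrow\DNE{\Pi_k}$ (Lemma \ref{lem: k -> k-1}.\eqref{item: sk-DNE -> pk+1-DNE}). Your second step is in effect a single-level, non-inductive variant of the paper's Lemma \ref{lem: PNFT(Uk, Pk)=>Sk-1-LEM}: it shows that $\LEM{\Sigma_{k-1}}$ already follows from $\PNFT{\U_k}{\Pi_k}$ at level $k$ alone, at the cost of using the base assumption $\T\supseteq\ha+\LEM{\Sigma_{k-2}}$ in place of the induction there; this is a nice observation, though it makes the proof longer than the paper's one-shot transfer of $\rho'\to\rho$. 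Two small points of bookkeeping: for $k=0$ the lemma is not vacuous but trivially provable in $\ha$ (quantifier-free formulas are decidable), and, as in the paper's own use of Theorem \ref{thm: conservation results for PNF -> Pi_k+2}, you could apply the conservation result directly to the open formula (or with a trivial prenex antecedent, as you do) without taking universal closures first.
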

\begin{proof}
Fix an instance of $\DNE{(\Pi_{k}\lor \Pi_{k})}$
$$
\vp :\equiv \forall x \left(\neg\neg \left(\vp_1(x) \lor \vp_2(x) \right) \to \vp_1(x) \lor \vp_2(x)  \right),
$$
where $\vp_1(x), \vp_2(x) \in \Pi_{k}(x)$.
Since $\neg\neg \left(\vp_1(x) \lor \vp_2(x) \right) $ and $ \vp_1(x) \lor \vp_2(x)$ are in $\U_{k}$, by our assumption, there exist $\rho(x)$ and $\rho'(x)$ in $\Pi_{k}(x)$ such that
%$\FV{ \rho(x)} = \FV{ \rho'(x)} = \FV{\vp_1 (x) \lor \vp_2 (x) } = \FV{ \neg\neg (\vp_1 (x) \lor \vp_2 (x))}$ and
$\T $ proves $ \rho(x) \lr \vp_1(x) \lor \vp_2(x) $  and $\rho'(x) \lr \neg \neg \left(\vp_1(x) \lor \vp_2(x)\right) $.
%Put $\vp' :\equiv \forall x \left(\rho'(x) \to \rho(x) \right)$.
Since $\pa \vdash \vp$ and $\pa$ is an extension of $\T$, we have $\pa \vdash  \rho'(x) \to \rho(x) $.
By Theorem \ref{thm: conservation results for PNF -> Pi_k+2}, we have that $\ha +\LEM{\Sigma_{k-2}}$ proves $\rho'(x) \to \rho(x)$, and hence, $\forall x\left(
\rho'(x) \to \rho(x) \right)$.
Since $\T$ is an extension of $\ha + \LEM{\Sigma_{k-2}}$, we have $\T \vdash \forall x\left(\rho'(x) \to \rho(x) \right)$, and hence, $\T \vdash \vp$.
\end{proof}

\begin{lemma}
\label{lem: PNFT(Uk, Pk)=>Sk-1-LEM}
Let $\T $ be a theory in-between $\ha $ and $\pa$.
If $\PNFT{\U_{k'}}{\Pi_{k'}}$ for all $k' \leq k$, then $\T \vdash \LEM{\Sigma_{k-1}}$.
\end{lemma}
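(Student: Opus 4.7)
The natural approach is induction on $k$. The cases $k=0$ and $k=1$ are immediate: $\LEM{\Sigma_{-1}}$ is vacuous (or absent by convention), and $\LEM{\Sigma_0}$ is quantifier-free LEM, which is already provable in $\ha$.

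For the induction step, fix $\psi(x) \in \Sigma_{k-1}(x)$ and consider the instance $\forall x (\psi(x) \lor \neg \psi(x))$ of $\LEM{\Sigma_{k-1}}$. A short calculation on alternation paths shows that $\psi(x) \lor \neg \psi(x)$ has degree at most $k-1$, so it belongs to $\F_{k-1} \subseteq \U_k^+$; by Remark \ref{rem: Ekp and Ukp} it is $\ha$-equivalent to some formula in $\U_k$ with the same free variables, and so the hypothesis $\PNFT{\U_k}{\Pi_k}$ produces a formula $\rho(x) \in \Pi_k$ with $\FV{\rho}=\FV{\psi}$ and
\[
\T \vdash (\psi(x) \lor \neg \psi(x)) \lr \rho(x).
\]
Since $\T \subseteq \pa$ and $\pa$ proves $\forall x (\psi(x) \lor \neg \psi(x))$, we obtain $\pa \vdash \forall x \rho(x)$. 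As $\forall x\,\rho(x) \in \Pi_k = \Pi_{(k-2)+2}$, Theorem \ref{thm: conservation results for PNF -> Pi_k+2} (applied with a trivially $\ha$-provable antecedent in prenex normal form, say $0=0$) yields $\ha + \LEM{\Sigma_{k-2}} \vdash \forall x \rho(x)$.

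The decisive step is now importing $\LEM{\Sigma_{k-2}}$ into $\T$ itself. For this I invoke the induction hypothesis: since $\PNFT{\U_{k'}}{\Pi_{k'}}$ holds in $\T$ for all $k' \leq k$, it also holds for all $k' \leq k-1$, so the IH gives $\T \vdash \LEM{\Sigma_{k-2}}$. Combining with the previous step, $\T \vdash \forall x \rho(x)$, and using the equivalence above, $\T \vdash \forall x (\psi(x) \lor \neg \psi(x))$.

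The main obstacle is precisely this bootstrap: the conservation theorem alone only situates the $\Pi_k$ witness inside $\ha + \LEM{\Sigma_{k-2}}$, which is properly stronger than $\ha$, and it is the induction hypothesis that supplies the missing classical strength inside $\T$. A minor subtlety is that $\psi \lor \neg \psi$ has degree $k-1$, not $k$, so it is not directly in $\U_k$; this is harmlessly resolved by routing through $\U_k^+$ and Remark \ref{rem: Ekp and Ukp} (equivalently, by padding with a dummy universal quantifier) before applying $\PNFT{\U_k}{\Pi_k}$.
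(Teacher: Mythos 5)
Your proof is correct and follows essentially the same route as the paper's: induction on $k$, apply the $\mathrm{PNFT}$ hypothesis to the law-of-excluded-middle instance to get a $\Pi$-equivalent over $\T$, note $\pa$ proves it, pull it down to $\ha + \LEM{\Sigma_{k-2}}$ via Theorem \ref{thm: conservation results for PNF -> Pi_k+2}, and import that classical strength into $\T$ by the induction hypothesis. The only cosmetic difference is that you apply $\PNFT{\U_k}{\Pi_k}$ to the open matrix $\psi(x)\lor\neg\psi(x)$ (padded into $\U_k$) and then quantify, whereas the paper applies it directly to the closed instance $\forall x(\psi(x)\lor\neg\psi(x))$, which already lies in $\U_k$; both are fine.
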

\begin{proof}
By induction on $k$.
The base case is trivial.
For the induction step, assume $\PNFT{\U_{k'}}{\Pi_{k'}}$ for all $k' \leq k+1$.
Then, by induction hypothesis, we have  $\T \vdash \LEM{\Sigma_{k-1}}$.
Fix an instance of $\LEM{\Sigma_{k}}$
$$
\vp :\equiv \forall x (\vp_1(x) \lor \neg \vp_1(x)),
$$
where $\vp_1(x) \in \Sigma_{k}(x)$.
Since $\vp \in \U_{k+2}$, by our assumption, there exists a sentence $\vp' \in \Pi_{k+1}$ such that
%$\FV{\vp} = \FV{\vp'}$ and 
$\T \vdash \vp \lr \vp'$.
Since $\pa \vdash \vp$, we have $\pa \vdash \vp'$.
Then, by Theorem \ref{thm: conservation results for PNF -> Pi_k+2}, we have $\ha + \LEM{\Sigma_{k-1}} \vdash \vp'$, and hence, $\T \vdash \vp'$.
Thus we have $\T \vdash \vp$.
\end{proof}

\begin{theorem}
\label{thm: characterization of PNFT(Ukp,Pk)}
Let $\T $ be a theory in-between $\ha $ and $\pa$.
%n extension of $\ha$ and  subtheory of $\pa$.
Then $\T \vdash \DNE{(\Pi_k\lor \Pi_k)}$ if and only if $\PNFT{\U_{k'}}{\Pi_{k'}}$ for all $k' \leq k$.
\end{theorem}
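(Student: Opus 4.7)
The plan is to derive both directions from the already-established Theorem \ref{thm: PNFT} together with Lemmas \ref{lem: PNFT(Uk, Ok) + Sk-1-LEM => PkPk-DNE} and \ref{lem: PNFT(Uk, Pk)=>Sk-1-LEM}; no new work is needed, since the substantive content has already been absorbed into Theorem \ref{thm: PNFT} and the conservation result Theorem \ref{thm: conservation results for PNF -> Pi_k+2} used in the proofs of these two lemmas.

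For the ``only if'' direction, I would first observe that by Remark \ref{rem: identification on Sigma_k and Pi_k} the inclusion $\Pi_{k'}(\ol{x}) \subseteq \Pi_k(\ol{x})$ holds for every $k' \leq k$, so every instance of $\DNE{(\Pi_{k'} \lor \Pi_{k'})}$ is automatically an instance of $\DNE{(\Pi_k \lor \Pi_k)}$. Hence $\T \vdash \DNE{(\Pi_k \lor \Pi_k)}$ entails $\T \vdash \DNE{(\Pi_{k'} \lor \Pi_{k'})}$ for all $k' \leq k$. Then Theorem \ref{thm: PNFT}.\eqref{eq: item for Pi_k in PNFT} applied at level $k'$ yields, for each $\varphi \in \U_{k'} \subseteq \U_{k'}^+$, a formula $\varphi' \in \Pi_{k'}$ with $\FV{\vp} = \FV{\vp'}$ such that $\T \vdash \varphi \lr \varphi'$, which is exactly $\PNFT{\U_{k'}}{\Pi_{k'}}$ over $\T$.

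For the ``if'' direction, assume $\PNFT{\U_{k'}}{\Pi_{k'}}$ holds over $\T$ for every $k' \leq k$. Lemma \ref{lem: PNFT(Uk, Pk)=>Sk-1-LEM} immediately gives $\T \vdash \LEM{\Sigma_{k-1}}$, so a fortiori $\T \vdash \LEM{\Sigma_{k-2}}$ (treating the case $k < 2$ as vacuous, in accordance with the convention of Lemma \ref{lem: PNFT(Uk, Ok) + Sk-1-LEM => PkPk-DNE}). Thus $\T$ lies in-between $\ha + \LEM{\Sigma_{k-2}}$ and $\pa$, and since $\PNFT{\U_k}{\Pi_k}$ holds over $\T$ by hypothesis, Lemma \ref{lem: PNFT(Uk, Ok) + Sk-1-LEM => PkPk-DNE} yields the desired conclusion $\T \vdash \DNE{(\Pi_k \lor \Pi_k)}$. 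I do not foresee any genuine obstacle here: the two things to verify are the harmless convention handling at the base levels $k = 0, 1$, and the passage from $\U_{k'}$ to $\U_{k'}^+$ needed to invoke Theorem \ref{thm: PNFT}.\eqref{eq: item for Pi_k in PNFT} in the ``only if'' part, both of which are immediate.
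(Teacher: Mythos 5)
Your proposal is correct and follows essentially the same route as the paper: the ``only if'' direction is exactly Theorem \ref{thm: PNFT}.\eqref{eq: item for Pi_k in PNFT} combined with the level-shifting of $\DNE{(\Pi_{k'}\lor\Pi_{k'})}$ into $\DNE{(\Pi_k\lor\Pi_k)}$ via Lemma \ref{lem: sigma_k and pi_k} (Remark \ref{rem: identification on Sigma_k and Pi_k}), and the ``if'' direction is the same two-step argument via Lemma \ref{lem: PNFT(Uk, Pk)=>Sk-1-LEM} followed by Lemma \ref{lem: PNFT(Uk, Ok) + Sk-1-LEM => PkPk-DNE}, with only a cosmetic difference in how the trivial low cases $k<2$ are dispatched.
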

\begin{proof}
The ``only if'' direction is immediate from Theorem \ref{thm: PNFT}.\eqref{eq: item for Pi_k in PNFT} and Lemma \ref{lem: sigma_k and pi_k}.
We show the converse direction.
Assume $\PNFT{\U_{k'}}{\Pi_{k'}}$ for all $k' \leq k$.
Let $k>0$ without loss of generality. 
By Lemma \ref{lem: PNFT(Uk, Pk)=>Sk-1-LEM}, $\T \vdash \LEM{\Sigma_{k-1}}$.
Then, by Lemma \ref{lem: PNFT(Uk, Ok) + Sk-1-LEM => PkPk-DNE}, we have $\T \vdash \DNE{(\Pi_k\lor \Pi_k)} $.
\end{proof}

\begin{definition}
Let $\Gamma$ be a class of formulas.
Then $\dn{\Gamma}$ denotes the class of $\ha$-formulas $\neg \neg \vp$ where $\vp \in \Gamma$, and  $\n{\Gamma}$ denotes that for $\neg \vp$ where $\vp \in \Gamma$.
\end{definition}

\begin{lemma}
\label{lem: PNFT(NNUk, NNPk)=>NNSk-1-LEM}
Let $\T $ be a theory in-between $\ha $ and $\pa$.
If $\PNFT{\dn{\U_{k'}}}{\dn{\Pi_{k'}}}$ for all $k' \leq k$, then $\T \vdash \NN{\LEM{\Sigma_{k-1}}}$.
\end{lemma}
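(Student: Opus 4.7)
The plan is to prove this by induction on $k$, directly mirroring the proof of Lemma \ref{lem: PNFT(Uk, Pk)=>Sk-1-LEM} with a double negation inserted at each step. The base case $k = 0$ is trivial (there is no instance of $\LEM{\Sigma_{-1}}$). For the induction step, suppose $\PNFT{\dn{\U_{k'}}}{\dn{\Pi_{k'}}}$ holds for all $k' \leq k+1$; by the induction hypothesis, $\T \vdash \NN{\LEM{\Sigma_{k-1}}}$, and it remains to show $\T \vdash \NN{\LEM{\Sigma_k}}$. I fix an arbitrary instance
$$
\chi :\equiv \neg \neg \forall x \left(\vp_1(x) \lor \neg \vp_1(x)\right), \qquad \vp_1(x) \in \Sigma_k(x).
$$

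A direct inspection of alternation paths (using that $\vp_1 \in \E_k$ and $\neg \vp_1 \in \U_k$, together with the fact that the outermost $\forall x$ prepends a leading $-$ only to paths not already starting with $-$) shows $\forall x(\vp_1 \lor \neg \vp_1) \in \U_{k+1}^+$; hence $\chi$ is, by Remark \ref{rem: Ekp and Ukp}, in $\dn{\U_{k+1}}$ up to $\ha$-provable equivalence. The hypothesis $\PNFT{\dn{\U_{k+1}}}{\dn{\Pi_{k+1}}}$ then supplies $\vp' \in \Pi_{k+1}$ with $\T \vdash \chi \llr \neg \neg \vp'$. Since $\pa$ extends $\T$ and proves $\chi$, we have $\pa \vdash \neg \neg \vp'$, hence $\pa \vdash \vp'$ by classical logic. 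Now Theorem \ref{thm: conservation results for PNF -> Pi_k+2} applied to $\vp' \in \Pi_{k+1}$ (viewed as $\Pi_{(k-1)+2}$) yields $\ha + \LEM{\Sigma_{k-1}} \vdash \vp'$, and Lemma \ref{lem: HA+P |- vp => HA+NNP |- NNvp} converts this into $\ha + \NN{\LEM{\Sigma_{k-1}}} \vdash \neg \neg \vp'$. Since $\T$ extends the latter theory by the induction hypothesis, $\T \vdash \neg \neg \vp'$, whence $\T \vdash \chi$ via the equivalence supplied by PNFT.

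The main obstacle is keeping the index bookkeeping tight. One must establish the sharp bound $\degree{\forall x(\vp_1 \lor \neg \vp_1)} = k+1$ rather than a loose $k+2$, so that PNFT produces a $\Pi_{k+1}$-witness, which is precisely the $\Pi_{(k-1)+2}$-formula that the conservation theorem needs in order to finish with $\LEM{\Sigma_{k-1}}$; a looser degree estimate would instead demand $\LEM{\Sigma_k}$ from the induction hypothesis, i.e., the very conclusion being proved.
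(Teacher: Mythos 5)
Your proof is correct and follows essentially the same route as the paper: fix an instance $\neg\neg\forall x(\vp_1\lor\neg\vp_1)$, observe it lies in $\dn{\U_{k+1}}$, apply $\PNFT{\dn{\U_{k+1}}}{\dn{\Pi_{k+1}}}$ to get $\vp'\in\Pi_{k+1}$, use $\pa\vdash\vp'$ together with Theorem \ref{thm: conservation results for PNF -> Pi_k+2} to get $\ha+\LEM{\Sigma_{k-1}}\vdash\vp'$, double-negate via Lemma \ref{lem: HA+P |- vp => HA+NNP |- NNvp}, and conclude using the induction hypothesis. Your explicit alternation-path computation giving the sharp bound $\U_{k+1}$ (rather than $\U_{k+2}$) is exactly the membership claim the paper states without proof, so the argument matches the paper's in substance.
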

\begin{proof}
By induction on $k$.
The base case is trivial.
For the induction step, assume $\PNFT{\dn{\U_{k'}}}{\dn{\Pi_{k'}}}$ for all $k' \leq k+1$.
Then, by induction hypothesis, $\T$ proves $\NN{\LEM{\Sigma_{k-1}}}$.
Fix an instance of $\NN{\LEM{\Sigma_{k}}}$
$$
\vp :\equiv \neg \neg \forall x (\vp_1(x) \lor \neg \vp_1(x)),
$$
where $\vp_1(x) \in \Sigma_{k}(x)$.
Since $\vp \in \dn{\U_{k+1}}$, by our assumption, there exists a sentence $\vp' \in \Pi_{k+1}$ such that
%$\FV{\vp} = \FV{\vp'}$ and 
$\T \vdash \vp \lr \neg \neg \vp'$.
Since $\pa \vdash \forall x (\vp_1(x) \lor \neg \vp_1(x))$, we have $\pa \vdash \vp'$.
Then, by Theorem \ref{thm: conservation results for PNF -> Pi_k+2}, we have $\ha + \LEM{\Sigma_{k-1}} \vdash \vp'$, and hence, $\ha + \NN{\LEM{\Sigma_{k-1}}} \vdash \neg\neg \vp'$ by Lemma \ref{lem: HA+P |- vp => HA+NNP |- NNvp}.
Then $\T \vdash \vp$.
\end{proof}

\begin{theorem}
\label{thm: characterization of PNFT(NNUkp,NNPk)}
Let $\T $ be a theory in-between $\ha $ and $\pa$.
%n extension of $\ha$ and  subtheory of $\pa$.
The following are pairwise equivalent:
\begin{enumerate}
    \item 
    \label{item: PNFT(NEk. NNPk)}
    $\PNFT{\n{\E_{k'}}}{ \dn{\Pi_{k'}}}$ for all $k' \leq k$;
    \item
        \label{item: PNFT(NNUk. NNPk)}
    $\PNFT{\dn{\U_{k'}}}{ \dn{\Pi_{k'}}}$ for all $k' \leq k$;
  %  \item
%        \label{item: T|-NNPkPk-DNE}
 %   $\T \vdash \NN{\DNE{(\Pi_k\lor \Pi_k)}}$;
    \item
    \label{item: T|-Ukp-DNS}
    $\T \vdash  \DNS{\U_k^+}$.
\end{enumerate}
%Then $\T \vdash \NN{\DNE{(\Pi_k\lor \Pi_k)}}$ if and only if $\PNFT{\dn{\U_{k'}}}{ \dn{\Pi_{k'}}}$ for all $k' \leq k$.
\end{theorem}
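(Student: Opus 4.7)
My plan is to prove $(1) \Leftrightarrow (2)$, $(3) \Rightarrow (2)$, and the main implication $(2) \Rightarrow (3)$. The equivalence $(1) \Leftrightarrow (2)$ follows from the alternation-path duality: $\vp \in \E_{k'}$ if and only if $\neg \vp \in \U_{k'}$ (since negation flips every alternation path and preserves the degree), combined with the intuitionistic tautology $\neg \neg \neg \vp \leftrightarrow \neg \vp$. Given an instance of (1) applied to $\neg \vp$ for $\vp \in \U_{k'}$, one obtains the corresponding instance of (2) for $\vp$, and symmetrically. The implication $(3) \Rightarrow (2)$ follows directly from Lemma \ref{PNFT_of_NE_k_and_NNU_k}, as $\DNS{\U_k^+}$ entails $\DNS{\U_{k'}^+}$ for every $k' \leq k$ by the inclusion $\U_{k'}^+ \subseteq \U_k^+$.

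For $(2) \Rightarrow (3)$, I fix an instance of $\DNS{\U_k^+}$ which by Remark \ref{rem: UkpDNS <-> UkDNS} may be written as $\vp \equiv \forall x (\forall y \neg \neg \vp_1(x,y) \to \neg \neg \forall y \vp_1(x,y))$ with $\vp_1(x,y) \in \U_k(x,y)$. Since $\forall y \vp_1(x,y) \in \U_k(x)$ by Lemma \ref{lem: basic facts on our classes}.\eqref{item: U forall}, applying (2) to both $\vp_1$ and $\forall y \vp_1$ produces $\vp_1'(x,y), \vp_2'(x) \in \Pi_k$ together with $\T$-equivalences $\neg \neg \vp_1 \leftrightarrow \neg \neg \vp_1'$ and $\neg \neg \forall y \vp_1 \leftrightarrow \neg \neg \vp_2'$. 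This reduces the task to deriving $\T \vdash \psi$ where $\psi \equiv \forall x (\forall y \neg \neg \vp_1'(x,y) \to \neg \neg \vp_2'(x))$; since $\psi$ has the form $\forall x (A \to \neg \neg B)$ it is $\neg \neg$-stable, so it suffices to show $\T \vdash \neg \neg \psi$. Moreover, because $\pa$ trivially proves $\vp$ and extends $\T$ (so it also inherits the above equivalences), one has $\pa \vdash \chi$ where $\chi \equiv \forall x (\forall y \vp_1'(x,y) \to \vp_2'(x))$; Kuroda's negative translation (Proposition \ref{Soundness of NT}) then yields $\ha \vdash \chi^N \equiv \neg \neg \SN{\chi}$.

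The final step connects $\chi^N$ with $\neg \neg \psi$. Using the intuitionistic tautology $\neg \neg (A \to B) \leftrightarrow (A \to \neg \neg B)$, the formula $\SN{\chi}$ simplifies to $\forall x (\forall y \neg \neg \SNP{\vp_1'}(x,y) \to \neg \neg \SNP{\vp_2'}(x))$ over $\ha$. By Lemma \ref{lem: equivalence of vp and vp' over semi-classical arithmetic}.\eqref{item: vp'<-> vp in Pi_k}, $\vp_i' \leftrightarrow \neg \neg \SNP{\vp_i'}$ holds over $\ha + \DNE{\Sigma_{k-1}}$, whence $\psi \leftrightarrow \SN{\chi}$ over $\ha + \DNE{\Sigma_{k-1}}$. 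Applying Corollary \ref{cor: HA+P |- vp_1 <-> vp_2 => HA+NNP |- NNvp_1 <-> NNvp_2} converts this into $\ha + \NN{\DNE{\Sigma_{k-1}}} \vdash \neg \neg \psi \leftrightarrow \chi^N$. Since Lemma \ref{lem: PNFT(NNUk, NNPk)=>NNSk-1-LEM} gives $\T \vdash \NN{\LEM{\Sigma_{k-1}}}$, and hence $\T \vdash \NN{\DNE{\Sigma_{k-1}}}$, combining with $\ha \vdash \chi^N$ delivers $\T \vdash \neg \neg \psi$, completing the argument. The main obstacle is precisely this last bookkeeping: one must realign the internal double negations produced by Kuroda's translation with the explicit $\neg \neg$'s in $\psi$ using only the weak principle $\NN{\DNE{\Sigma_{k-1}}}$, rather than the stronger $\DNE{\Sigma_{k-1}}$ itself, which is not available over $\T$.
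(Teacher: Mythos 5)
Your proposal is correct, and for the two easy parts ((\ref{item: PNFT(NEk. NNPk)})$\leftrightarrow$(\ref{item: PNFT(NNUk. NNPk)}) by flipping alternation paths and $\neg\neg\neg\vp \lr \neg\vp$, and (\ref{item: T|-Ukp-DNS})$\Rightarrow$(\ref{item: PNFT(NNUk. NNPk)}) by Lemma \ref{PNFT_of_NE_k_and_NNU_k}) it coincides with the paper. The interesting difference is in (\ref{item: PNFT(NNUk. NNPk)})$\Rightarrow$(\ref{item: T|-Ukp-DNS}). The paper applies the prenex hypothesis to the two compound formulas $\forall y\neg\neg\vp_1$ and $\forall y\vp_1$, obtains $\rho',\rho\in\Pi_{k'}$, feeds the classical fact $\pa\vdash\rho'\to\rho$ into the conservation theorem (Theorem \ref{thm: conservation results for PNF -> Pi_k+2}, whose proof uses Kuroda's translation \emph{plus} the Friedman translation), and then double-negates via Lemma \ref{lem: HA+P |- vp => HA+NNP |- NNvp}. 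You instead exploit the fact that the goal is already doubly negated: you apply Kuroda's translation (Proposition \ref{Soundness of NT}) directly to $\pa\vdash\chi$ and realign the internal double negations using Lemma \ref{lem: equivalence of vp and vp' over semi-classical arithmetic}.\eqref{item: vp'<-> vp in Pi_k} together with Corollary \ref{cor: HA+P |- vp_1 <-> vp_2 => HA+NNP |- NNvp_1 <-> NNvp_2}, so the A-translation machinery is not needed for this step; the price is the $\SN{\cdot}$ bookkeeping, and note that your ``whence $\psi\lr\SN{\chi}$'' also silently uses $\DNE{\Pi_k}$ from $\DNE{\Sigma_{k-1}}$ (Lemma \ref{lem: k -> k-1}.\eqref{item: sk-DNE -> pk+1-DNE}), which is fine. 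Both routes still depend on Lemma \ref{lem: PNFT(NNUk, NNPk)=>NNSk-1-LEM} (hence indirectly on the conservation theorem) to get the double-negated principle inside $\T$. Two harmless imprecisions: Lemma \ref{lem: basic facts on our classes}.\eqref{item: U forall} literally yields $\forall y\vp_1\in\U_k^+$, so, as in the paper, one should note that since every alternation path of $\forall y\vp_1$ begins with $-$ it lies in $\U_{k'}$ for some $k'\leq k$ (covered by the ``for all $k'\leq k$'' hypothesis); and the stability of $\psi$ rests on the intuitionistic equivalence $\neg\neg(A\to\neg\neg B)\lr(A\to\neg\neg B)$, which indeed holds. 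So the argument stands as a mildly more elementary variant of the paper's proof of this implication.
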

\begin{proof}
The equivalence of \eqref{item: PNFT(NEk. NNPk)} and \eqref{item: PNFT(NNUk. NNPk)} is trivial (cf. the proof of Lemma \ref{PNFT_of_NE_k_and_NNU_k}).
%\ref{rem: eq on NE_k and NNU_k}). 
In addition, $(\ref{item: T|-Ukp-DNS} \to \ref{item: PNFT(NNUk. NNPk)})$ is immediate from Lemma \ref{PNFT_of_NE_k_and_NNU_k}.
%and Lemma \ref{lem: sigma_k and pi_k}.
In what follows, we show $(\ref{item: PNFT(NNUk. NNPk)} \to \ref{item: T|-Ukp-DNS})$.
Assume $\PNFT{\dn{\U_{k'}}}{\dn{\Pi_{k'}}}$ for all $k' \leq k$.
Since $\DNS{\U_{k}^+}$ is intuitionistically equivalent to $\NN{\DNS{\U_{k}^+}}$ (See Remark \ref{rem: NNDNS <-> DNS}), it suffices to show $\T \vdash \NN{\DNS{\U_{k}^+}}$.
Let $k>0$ without loss of generality. 
Fix an instance of $\NN{\DNS{\U_k^+}}$
$$\vp :\equiv \neg \neg \forall x \left( \forall y \neg \neg \vp_1(x,y) \to  \neg \neg \forall y  \vp_1(x,y)  \right)$$
where $\vp_1(x,y)\in \U_k^+(x,y)$.
By  Lemma \ref{lem: basic facts on our classes}, we have that $\forall y\neg \neg \vp_1(x,y)$ and  $\forall y\vp_1(x,y)$ are in $\U_k^+(x)$.
%By the definition of alternation paths (see 
Since $i(s) \equiv -$ for all alternation paths $s$ of $\forall y\neg \neg \vp_1(x,y)$ and $\forall y \vp_1(x,y)$, it is straightforward to show that there exists $k'\leq k$ such that $\forall y\neg \neg \vp_1(x,y)$ and  $\forall y\vp_1(x,y)$ are in $\U_{k'}(x)$.
Then, by $\PNFT{\dn{\U_{k'}}}{ \dn{\Pi_{k'}}}$,
there exist $\rho(x), \rho'(x)\in \Pi_{k'}(x)$
such that
$\T $ proves  $\neg \neg \rho(x) \lr \neg \neg \forall y \vp_1(x,y) $ and  $\neg \neg \rho'(x) \lr \neg \neg \forall y \neg \neg \vp_1(x,y) $.
%Let $\psi\in \Pi_{k+2}$ be a sentence of the prenex normal form which is obtained from 
%Let
%$$\psi :\equiv \forall x \left( \rho(x) \to \rho'(x) \right).$$
Since $\pa$ is an extension of $\T$ and $\pa \vdash \vp$, we have $\pa \vdash \rho'(x) \to \rho(x)$.
Then, by Theorem \ref{thm: conservation results for PNF -> Pi_k+2}, we have
that $\ha + \LEM{\Sigma_{k'-2}} \, (\ha$ if $k'<2)$ proves $\rho'(x) \to \rho(x)$, and hence, $\forall x \left( \neg \neg \rho'(x) \to \neg \neg \rho(x) \right)$.
%\forall x \left( \forall y \rho'(x,y) \to \rho(x) \right)$ as in the proof of Proposition \ref{prop: Ukp-DNS <-> NNPkPk-DNE}.
By Lemma \ref{lem: HA+P |- vp => HA+NNP |- NNvp}, we have
%$\ha + \NN{\LEM{\Sigma_{k'-2}}} \vdash \neg \neg \psi$, and hence,
\begin{equation*}
\label{eq: HA+NNSk'-2 |- psi}
\ha + \NN{\LEM{\Sigma_{k'-2}}} \vdash \neg \neg \forall x \left( \neg \neg \rho'(x) \to \neg \neg \rho(x) \right) .   
\end{equation*}
%and hence, 
%$\ha + \NN{\LEM{\Sigma_k}} \vdash \neg \neg \forall x \left(\forall y \rho'(x,y) \to \rho(x)\right)$.
On the other hand,
%by Lemma \ref{lem: k -> k-1}.\eqref{item: sk-DNE -> pk+1-DNE}, we have that $\ha +\DNE{\Sigma_{k'-1}} \, (\ha$ if $k'<1)$ proves $ \forall x,y (\rho'(x,y) \lr \neg \neg \rho'(x,y))$,
%Then, by Lemma \ref{lem: HA+P |- vp => HA+NNP |- NNvp}, we have
%\begin{equation}
%    \label{eq: HA+NNDNESk'-1 |- NN...}
%\ha +\NN{\DNE{\Sigma_{k'-1}}} \vdash \neg \neg \forall x,y (\rho'(x,y) \lr \neg \neg \rho'(x,y)).
%\end{equation}
%In addition, 
by Lemma \ref{lem: PNFT(NNUk, NNPk)=>NNSk-1-LEM} and our assumption, we have
%\begin{equation*}
 %   \label{eq: T |- NNLEMSk-1}
$\T \vdash \NN{\LEM{\Sigma_{k-1}}}$.
%\end{equation*}
%Since $\NN{\LEM{\Sigma_{k-1}}}$ implies $\NN{\LEM{\Sigma_{k'-2}}}$ and $\NN{\DNE{\Sigma_{k'-1}}}$, 
%By \eqref{eq: HA+NNSk'-2 |- psi}, \eqref{eq: HA+NNDNESk'-1 |- NN...} and \eqref{eq: T |- NNLEMSk-1},
Then we have
$$
\T \vdash \neg \neg \forall x \left( \neg \neg \forall y \neg \neg \vp_1(x,y) \to \neg \neg \forall y \vp_1(x,y)
\right),
$$
and hence, $\T \vdash \vp$.
%Thus we have shown $\T \vdash \DNS{\U_k^+}$.
\end{proof}

\begin{remark}
Theorem \ref{thm: characterization of PNFT(NNUkp,NNPk)} shows that the verification theory for Lemma \ref{PNFT_of_NE_k_and_NNU_k} is optimal.
\end{remark}

\begin{definition}\label{def: Gamma-v}
Let $\Gamma$ be a class of $\ha$-formulas.
$\df{\Gamma}$ denotes the class of formulas in $\Gamma$ which do not contain $\lor$.
\end{definition}

\begin{lemma}
\label{lem: PNFT(Ekm, Sk) + Pk-1-LEM => Sk-DNE}
Let $\T $ be a theory in-between $\ha +\LEM{\Pi_{k-1}} $ $(\ha$ if $k=0)$ and $\pa$.
If $\PNFT{\df{\E_{k'}}}{\Sigma_{k'}}$ for all $k' \leq k$, then $\T \vdash \DNE{\Sigma_{k}}$.
\end{lemma}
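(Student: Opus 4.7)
The plan is to argue by induction on $k$, driven by the conservation theorem \ref{thm: conservation results for PNF -> Pi_k+2}. The base case $k=0$ is immediate since $\DNE{\Sigma_0}$ is provable in $\ha$ from decidability of quantifier-free formulas. For the inductive step, note that $\T \supseteq \ha + \LEM{\Pi_{k-1}} \supseteq \ha + \LEM{\Pi_{k-2}}$ and the hypothesis $\PNFT{\df{\E_{k'}}}{\Sigma_{k'}}$ for $k' \le k$ includes $k' \le k-1$, so the induction hypothesis delivers $\T \vdash \DNE{\Sigma_{k-1}}$.

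Before the main argument I would first upgrade $\LEM{\Pi_{k-1}}$ to $\LEM{\Sigma_{k-1}}$ inside $\T$. The case $k=1$ is immediate, since $\Sigma_0 = \Pi_0$. For $k \ge 2$, take $\vp \in \Sigma_{k-1}$, written WLOG as $\exists y\, \psi(y)$ with $\psi \in \Pi_{k-2}$ and $\psi$ $\lor$-free (replace the quantifier-free part by an equivalent prime equation). By Lemma \ref{lem: basic facts on our classes}.\eqref{item: EU to}, $\neg \psi \in \df{\E_{k-2}^+}$, so Remark \ref{rem: Ekp and Ukp} together with $\PNFT{\df{\E_{k-2}}}{\Sigma_{k-2}}$ produces $\sigma \in \Sigma_{k-2}$ with $\T \vdash \neg \psi \lr \sigma$. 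Hence $\T \vdash \neg \vp \lr \forall y\, \sigma$, with $\forall y\, \sigma \in \Pi_{k-1}$; now $\LEM{\Pi_{k-1}}$ yields $\neg \vp \lor \neg \neg \vp$, and combining with the inductive $\DNE{\Sigma_{k-1}}$ gives $\vp \lor \neg \vp$.

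For the main claim, fix $\vp_1(x) \in \Sigma_k(x)$; WLOG $\vp_1$ is $\lor$-free. Applying Lemma \ref{lem: basic facts on our classes}.\eqref{item: EU to} twice yields $\neg \neg \vp_1(x) \in \df{\E_k^+}$, so Remark \ref{rem: Ekp and Ukp} and $\PNFT{\df{\E_k}}{\Sigma_k}$ provide $\rho'(x) \in \Sigma_k$ with $\T \vdash \neg \neg \vp_1(x) \lr \rho'(x)$. Since $\pa \vdash \neg \neg \vp_1 \to \vp_1$, this gives $\pa \vdash \rho'(x) \to \vp_1(x)$. Viewing $\vp_1 \in \Sigma_k$ as a member of $\Pi_{k+1}$ by the identification of Remark \ref{rem: identification on Sigma_k and Pi_k}, Theorem \ref{thm: conservation results for PNF -> Pi_k+2} (applied with $k-1$ in place of its $k$) yields $\ha + \LEM{\Sigma_{k-1}} \vdash \rho'(x) \to \vp_1(x)$. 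Since $\T$ proves $\LEM{\Sigma_{k-1}}$ by the previous paragraph, $\T \vdash \neg \neg \vp_1(x) \to \vp_1(x)$, as required.

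The principal obstacle is that the conservation theorem naturally calls for $\LEM{\Sigma_{k-1}}$, while the lemma only assumes $\LEM{\Pi_{k-1}}$; this discrepancy is precisely what the PNFT hypothesis (used to put $\neg \vp$ for $\vp \in \Sigma_{k-1}$ into $\Pi_{k-1}$ form) together with the inductively available $\DNE{\Sigma_{k-1}}$ is designed to reconcile.
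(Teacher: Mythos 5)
Your proof is correct and follows essentially the same route as the paper's: induct on $k$, use $\PNFT{\df{\E_k}}{\Sigma_k}$ to replace $\neg\neg\vp_1$ by a prenex $\Sigma_k$ formula, pass the resulting $\pa$-provable implication through Theorem \ref{thm: conservation results for PNF -> Pi_k+2} down to $\ha+\LEM{\Sigma_{k-1}}$, and conclude in $\T$. The only (harmless) deviation is that where the paper obtains $\LEM{\Sigma_{k-1}}$ from $\LEM{\Pi_{k-1}}$ and the inductively available $\DNE{\Sigma_{k-1}}$ by citing \cite[Theorem 3.1(ii)]{ABHK04}, you derive it directly by putting $\neg\vp$ for $\vp\in\Sigma_{k-1}$ into $\Pi_{k-1}$ form via the PNFT hypothesis at level $k-2$ (one could equally use Lemma \ref{lem: NP and NS}.\eqref{item: NS}), which is a self-contained substitute for that citation.
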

\begin{proof}
By induction on $k$.
The base case is trivial.
For the induction step, assume the assertion for $k$ and let $\T$ be a theory in-between $\ha +\LEM{\Pi_{k}}$ and $\pa$.
Assume also that $\PNFT{\df{\E_{k'}}}{\Sigma_{k'}}$ holds for all $k' \leq k+1$.
Then, by induction hypothesis, $\T$ proves $\DNE{\Sigma_{k}}$.
Since $\T$ contains $\ha +\LEM{\Pi_{k}},$ we have $\T \vdash \LEM{\Sigma_{k}}$ by \cite[Theorem 3.1(ii)]{ABHK04}.
Fix an instance of $\DNE{\Sigma_{k+1}}$
$$
\vp :\equiv \forall x ( \neg \neg \vp_1(x) \to \vp_1(x)),
$$
where $\vp_1(x) \in \Sigma_{k+1}(x)$.
Without loss of generality, one can assume that $\vp_1(x)$ does not contain $\lor$ (cf. \cite[Proposition 3.8]{Koh08}).
Since $\neg \neg \vp_1(x)\in \df{\E_{k+1}}$, 
By $\PNFT{\df{\E_{k+1}}}{\Sigma_{k+1}}$, there exists $\vp_1'(x) \in \Sigma_{k+1}(x)$ such that $\T \vdash \neg \neg \vp_1(x) \lr \vp_1' (x)$.
Since $\pa$ is an extension of $\T$ and $\pa \vdash \vp$, we have $\pa \vdash \vp_1' (x) \to \vp_1(x)$.
Then, by Lemma \ref{lem: sigma_k and pi_k} and Theorem \ref{thm: conservation results for PNF -> Pi_k+2}, we have that
$\ha + \LEM{\Sigma_{k}} $ proves $ \vp_1' (x) \to \vp_1(x)$, and hence, 
$\forall x \left(
\vp_1' (x) \to \vp_1(x)
\right)$.
Since $\T$ is an extension of $\ha  + \LEM{\Sigma_{k}}$, we have $\T \vdash \vp$.
\end{proof}

\begin{lemma}
\label{lem: PNFT(Ek,Sk) + Pk-1-LEM => PNFT(NEk, NNSk)}
Let $\T $ be an extension of $\ha + \NN{\DNE{\Sigma_{k-1}}} $ $(\ha$ if $k=0)$.
If $\PNFT{\E_{k'}}{\Sigma_{k'}}$ for all $k' \leq k$, then $\PNFT{\n{\E_{k'}}}{\dn{\Pi_{k'}}}$ for all $k' \leq k$.
\end{lemma}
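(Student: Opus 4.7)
The plan is to derive the conclusion directly from the hypothesis by composing three things: the given prenex normal form assumption, Lemma \ref{lem: NP and NS}.\eqref{item: NS} (which converts the negation of a $\Sigma_{k'}$ formula into a $\Pi_{k'}$ formula modulo $\DNE{\Sigma_{k'-1}}$), and the double-negation transfer result Corollary \ref{cor: HA+P |- vp_1 <-> vp_2 => HA+NNP |- NNvp_1 <-> NNvp_2}. Fix $k' \le k$ and take $\vp \in \E_{k'}$. First, invoke $\PNFT{\E_{k'}}{\Sigma_{k'}}$ to obtain $\vp' \in \Sigma_{k'}$ with $\FV{\vp} = \FV{\vp'}$ and $\T \vdash \vp \lr \vp'$, so in particular $\T \vdash \neg \vp \lr \neg \vp'$.

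Next I would apply Lemma \ref{lem: NP and NS}.\eqref{item: NS} to $\vp' \in \Sigma_{k'}$ to obtain $\psi \in \Pi_{k'}$ with $\FV{\vp'} = \FV{\psi}$ and $\ha + \DNE{\Sigma_{k'-1}} \vdash \neg \vp' \lr \psi$ (resp.\ $\ha$ if $k' = 0$). Using Corollary \ref{cor: HA+P |- vp_1 <-> vp_2 => HA+NNP |- NNvp_1 <-> NNvp_2} and the fact that $\NN{(\neg \vp')}$ is intuitionistically equivalent to $\neg \vp'$, this yields
$$\ha + \NN{\DNE{\Sigma_{k'-1}}} \vdash \neg \vp' \lr \neg \neg \psi.$$

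The remaining point is to see that $\T$ proves $\NN{\DNE{\Sigma_{k'-1}}}$ whenever $k' \le k$. Since for $k' \le k$ any instance of $\DNE{\Sigma_{k'-1}}$ is (modulo Lemma \ref{lem: sigma_k and pi_k}) an instance of $\DNE{\Sigma_{k-1}}$, we have $\ha \vdash \DNE{\Sigma_{k-1}} \to \DNE{\Sigma_{k'-1}}$, whence by Lemma \ref{lem: HA+P |- vp => HA+NNP |- NNvp} also $\ha + \NN{\DNE{\Sigma_{k-1}}} \vdash \NN{\DNE{\Sigma_{k'-1}}}$. Hence $\T \vdash \neg \vp' \lr \neg \neg \psi$, and combining with $\T \vdash \neg \vp \lr \neg \vp'$ we obtain $\T \vdash \neg \vp \lr \neg \neg \psi$ with $\psi \in \Pi_{k'}$, which is exactly $\PNFT{\n{\E_{k'}}}{\dn{\Pi_{k'}}}$.

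Since each step is a direct bookkeeping application of results already established in Sections \ref{sec: PNFT} and the basic lemmata section, no substantive obstacle is expected; the only care-requiring item is the uniformity in $k'$, which is handled by the monotonicity remark above. The case $k = 0$ is immediate by taking $\psi$ to be $\vp$ itself (every $\F_0$ formula is already in $\Pi_0$, and intuitionistic logic gives $\neg \vp \lr \neg \neg \neg \vp$).
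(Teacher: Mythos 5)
Your proposal is correct and follows essentially the same route as the paper: apply $\PNFT{\E_{k'}}{\Sigma_{k'}}$, then Lemma \ref{lem: NP and NS}.\eqref{item: NS} together with Corollary \ref{cor: HA+P |- vp_1 <-> vp_2 => HA+NNP |- NNvp_1 <-> NNvp_2} to pass from $\neg\vp'$ to $\neg\neg\vp''$ over $\ha + \NN{\DNE{\Sigma_{k'-1}}}$, and compose. Your explicit remark that $\T \vdash \NN{\DNE{\Sigma_{k'-1}}}$ for $k' \leq k$ (via Lemma \ref{lem: sigma_k and pi_k} and Lemma \ref{lem: HA+P |- vp => HA+NNP |- NNvp}) is a point the paper leaves implicit, but it is handled correctly.
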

\begin{proof}
Assume $\PNFT{\E_{k'}}{\Sigma_{k'}}$ for all $k' \leq k$.
%By Lemma \ref{lem: PNFT(Ekm, Sk) + Pk-1-LEM => Sk-DNE}, we have
%\begin{equation}
%\label{eq: T |- Sk-DNE}
%\T \vdash \DNE{\Sigma_k}.
%\end{equation}
Fix $k' \leq k$ and
%For each 
$\vp \in \E_{k'}$.
By $\PNFT{\E_{k'}}{\Sigma_{k'}}$, there exists $\vp'\in \Sigma_{k'}$ such that $\FV{\vp} = \FV{\vp'}$ and $\T \vdash \vp \lr \vp'$.
Then
\begin{equation*}
\label{eq: vp <-> vp'}
\T \vdash \neg \vp \lr \neg \vp'.
\end{equation*}
On the other hand, by Lemma \ref{lem: NP and NS}.\eqref{item: NS}, there exists $\vp'' \in \Pi_{k'}$ such that  $\FV{\vp'} = \FV{\vp''}$ and $\ha + \DNE{\Sigma_{k'-1}} \vdash \neg \vp' \lr \vp''$.
Then, by Corollary \ref{cor: HA+P |- vp_1 <-> vp_2 => HA+NNP |- NNvp_1 <-> NNvp_2}, we have
\begin{equation*}
\label{eq: nvp' <-> nnvp'}
\ha + \NN{\DNE{\Sigma_{k'-1}}} \vdash \neg \vp' \lr \neg \neg \vp'' .
\end{equation*}
%By %\eqref{eq: T |- Sk-DNE}, \eqref{eq: vp <-> vp'} and \eqref{eq: nvp' <-> nnvp'},
Then $\FV{\neg \vp} =\FV{\neg \neg \vp''}$ and $\T \vdash \neg \vp \lr \neg \neg \vp''$.
Thus we have shown $\PNFT{\n{\E_{k'}}}{\dn{\Sigma_{k'}}}$.
\end{proof}

\begin{theorem}
\label{thm: characterization of PNFT(Ek,Sk)}
Let $\T $ be a theory in-between $\ha + \LEM{\Pi_{k-1}}$ $(\ha$ if $k=0)$ and $\pa$.
%n extension of $\ha$ and  subtheory of $\pa$.
Then $\T \vdash \DNE{\Sigma_k} + \DNS{\U_k^+}$ if and only if $\PNFT{\E_{k'}}{\Sigma_{k'}}$ for all $k' \leq k$.
\end{theorem}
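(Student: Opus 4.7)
The plan is to split the equivalence into its two directions and reduce each to results already established in the paper. For the ``only if'' direction, assuming $\T \vdash \DNE{\Sigma_k} + \DNS{\U_k^+}$, I would note that for every $k' \leq k$ both $\DNE{\Sigma_{k'}}$ and $\DNS{\U_{k'}^+}$ follow in $\T$ by monotonicity ($\U_{k'}^+ \subseteq \U_k^+$, and a $\Sigma_{k'}$-formula is a $\Sigma_k$-formula by Remark \ref{rem: identification on Sigma_k and Pi_k}). Then Theorem \ref{thm: PNFT}.\eqref{eq: item for Sigma_k in PNFT} immediately delivers $\PNFT{\E_{k'}}{\Sigma_{k'}}$ for each such $k'$.

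For the converse direction, assume $\PNFT{\E_{k'}}{\Sigma_{k'}}$ for all $k' \leq k$. The strategy is to derive $\DNE{\Sigma_k}$ first, and then to chain through Lemma \ref{lem: PNFT(Ek,Sk) + Pk-1-LEM => PNFT(NEk, NNSk)} and Theorem \ref{thm: characterization of PNFT(NNUkp,NNPk)} in order to extract $\DNS{\U_k^+}$. Since $\df{\E_{k'}} \subseteq \E_{k'}$, the hypothesis trivially yields $\PNFT{\df{\E_{k'}}}{\Sigma_{k'}}$ for every $k' \leq k$, and the base-theory hypothesis $\T \supseteq \ha + \LEM{\Pi_{k-1}}$ is exactly the one required by Lemma \ref{lem: PNFT(Ekm, Sk) + Pk-1-LEM => Sk-DNE}. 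Applying that lemma gives $\T \vdash \DNE{\Sigma_k}$.

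Having $\DNE{\Sigma_k}$ in hand, I would then observe that it implies $\DNE{\Sigma_{k-1}}$ (every $\Sigma_{k-1}$-formula is a $\Sigma_k$-formula by Remark \ref{rem: identification on Sigma_k and Pi_k}) and a fortiori $\NN{\DNE{\Sigma_{k-1}}}$, so $\T$ is an extension of $\ha + \NN{\DNE{\Sigma_{k-1}}}$. Lemma \ref{lem: PNFT(Ek,Sk) + Pk-1-LEM => PNFT(NEk, NNSk)} then upgrades the assumption to $\PNFT{\n{\E_{k'}}}{\dn{\Pi_{k'}}}$ for all $k' \leq k$, and the equivalence already proved in Theorem \ref{thm: characterization of PNFT(NNUkp,NNPk)} converts this into $\T \vdash \DNS{\U_k^+}$. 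Combined with the $\DNE{\Sigma_k}$ obtained in the previous step, this completes the reduction.

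No serious obstacle is anticipated: the entire argument is a careful chain of three earlier results. The only points that need attention are verifying that each intermediate base-theory hypothesis (most notably $\NN{\DNE{\Sigma_{k-1}}}$, which has to be genuinely available before invoking Lemma \ref{lem: PNFT(Ek,Sk) + Pk-1-LEM => PNFT(NEk, NNSk)}) is in place at the moment the corresponding lemma is applied, and that the passage from $\PNFT{\E_{k'}}{\Sigma_{k'}}$ to its $\df$-restricted variant is a triviality because our assumption is already unrestricted. The degenerate case $k=0$ poses no problem either, since then both sides of the equivalence are $\ha$-provable.
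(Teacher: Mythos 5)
Your proposal is correct and follows essentially the same route as the paper: the ``only if'' direction via Theorem \ref{thm: PNFT}.\eqref{eq: item for Sigma_k in PNFT} (with the monotonicity/identification facts of Lemma \ref{lem: sigma_k and pi_k}), and the converse by first extracting $\DNE{\Sigma_k}$ from Lemma \ref{lem: PNFT(Ekm, Sk) + Pk-1-LEM => Sk-DNE} and then combining Lemma \ref{lem: PNFT(Ek,Sk) + Pk-1-LEM => PNFT(NEk, NNSk)} with Theorem \ref{thm: characterization of PNFT(NNUkp,NNPk)} to obtain $\DNS{\U_k^+}$. Your explicit check that $\NN{\DNE{\Sigma_{k-1}}}$ is available (from the already-derived $\DNE{\Sigma_k}$) before invoking Lemma \ref{lem: PNFT(Ek,Sk) + Pk-1-LEM => PNFT(NEk, NNSk)} is a detail the paper leaves implicit, but it is the same argument.
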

\begin{proof}
The ``only if'' direction is immediate from Theorem \ref{thm: PNFT}.\eqref{eq: item for Sigma_k in PNFT} and Lemma \ref{lem: sigma_k and pi_k}.
We show the converse direction.
Assume $\PNFT{\E_{k'}}{\Sigma_{k'}}$ for all $k' \leq k$.
Let $k>0$ without loss of generality. 
By Lemma \ref{lem: PNFT(Ekm, Sk) + Pk-1-LEM => Sk-DNE}, we have $\T \vdash \DNE{\Sigma_{k}}$.
In addition, by Lemma \ref{lem: PNFT(Ek,Sk) + Pk-1-LEM => PNFT(NEk, NNSk)} and Theorem \ref{thm: characterization of PNFT(NNUkp,NNPk)}, we have $\T \vdash  \DNS{\U_k^+}$.
\end{proof}

\begin{remark}
It is still open whether the assumption that $\T$ contains $\LEM{\Pi_{k-1}}$ can be omitted in Theorem \ref{thm: characterization of PNFT(Ek,Sk)}.
\end{remark}

\begin{remark}
\label{rem: impossibility of PNFT}
Akama et al. \cite{ABHK04} shows that $\LEM{\Pi_k}$ does not derive $\DNE{\Sigma_k} $ and $\DNE{\Sigma_k}$  does not derive $\DNE{(\Pi_k \lor \Pi_k)}$.
%the arithmetical hierarchy of the logical principles, which is summarized in \cite[Figure 2]{ABHK04}.
%According to the latter, $\LEM{\Pi_k}$ (which derives $\DNS{\U_k^+}$ by Corollary \ref{cor: pk v pk -DNE -> Ukp-DNE}) is the strongest principle which does not derive $\DNE{\Sigma_k} $, and $\DNE{\Sigma_k}$ is so for $\DNE{(\Pi_k \lor \Pi_k)}$.
Theorem \ref{thm: characterization of PNFT(Ek,Sk)} reveals that the prenex normal form theorem for $\E_k$ and $\Sigma_k$ does not hold in $\ha + \LEM{\Pi_k}$, and Theorem \ref{thm: characterization of PNFT(Ukp,Pk)} reveals that the prenex normal form theorem for $\U_k$ and $\Pi_k$ does not hold in $\ha + \DNE{\Sigma_k}$.
\end{remark}

\begin{corollary}
\label{cor: characterization of PNFT(Ek,Sk)+PNFT(Uk,Pk)}
Let $\T $ be a theory in-between $\ha $
%+ \LEM{\Pi_{k-1}}$ $(\ha$ if $k=0)$ 
and $\pa$.
%n extension of $\ha$ and  subtheory of $\pa$.
Then $\T \vdash \DNE{\Sigma_k} + \DNE{\left( \Pi_k \lor \Pi_k \right)}$ if and only if $\PNFT{\E_{k'}}{\Sigma_{k'}}$ and $\PNFT{\U_{k'}}{\Pi_{k'}}$ for all $k' \leq k$.
\end{corollary}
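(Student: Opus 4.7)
My strategy is to reduce each direction to the characterization theorems already established. For the ``only if'' direction, the plan is: from $\T \vdash \DNE{(\Pi_k \lor \Pi_k)}$, invoke Corollary \ref{cor: pk v pk -DNE -> Ukp-DNE} to conclude $\T \vdash \DNS{\U_k^+}$, so that together with $\DNE{\Sigma_k}$ the hypotheses of both items of Theorem \ref{thm: PNFT} are met; the prenex normal form statements for $\E_{k'}$ and $\U_{k'}$ (all $k' \leq k$) then follow by Lemma \ref{lem: sigma_k and pi_k} and the inclusions $\E_{k'} \subseteq \E_{k'}^+$, $\U_{k'} \subseteq \U_{k'}^+$.

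For the ``if'' direction, first apply Theorem \ref{thm: characterization of PNFT(Ukp,Pk)} to the hypothesis $\PNFT{\U_{k'}}{\Pi_{k'}}$ (all $k'\leq k$) to obtain $\T \vdash \DNE{(\Pi_k \lor \Pi_k)}$ immediately. The remaining task is to extract $\DNE{\Sigma_k}$, and the natural route is via Theorem \ref{thm: characterization of PNFT(Ek,Sk)}. However, that theorem demands $\T \supseteq \ha + \LEM{\Pi_{k-1}}$, an assumption absent from the present statement. The main obstacle is therefore to verify $\T \vdash \LEM{\Pi_{k-1}}$ using only $\T \vdash \DNE{(\Pi_k \lor \Pi_k)}$.

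The plan for this key step is as follows. By Lemma \ref{lem: k -> k-1}.\eqref{item: (p_k+1 v p_k+1)-DNE -> s_k-DNE}, $\T$ already proves $\DNE{\Sigma_{k-1}}$, so by Lemma \ref{lem: NP and NS}.\eqref{item: NP} every $\vp(x) \in \Pi_{k-1}$ admits some $\psi(x) \in \Sigma_{k-1}$ with $\T \vdash \neg \vp(x) \lr \psi(x)$. Both $\vp(x)$ and $\psi(x)$ sit inside $\Pi_k$ by Remark \ref{rem: identification on Sigma_k and Pi_k}, and $\neg \neg (\vp(x) \lor \neg \vp(x))$ is intuitionistically provable, so $\T$ derives $\neg \neg (\vp(x) \lor \psi(x))$. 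Applying $\DNE{(\Pi_k \lor \Pi_k)}$ then produces $\vp(x) \lor \psi(x)$, hence $\vp(x) \lor \neg \vp(x)$, from which $\LEM{\Pi_{k-1}}$ follows by universal generalisation.

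Once $\T \vdash \LEM{\Pi_{k-1}}$ has been established, $\T$ is an extension of $\ha + \LEM{\Pi_{k-1}}$, so Theorem \ref{thm: characterization of PNFT(Ek,Sk)} applies to the hypothesis $\PNFT{\E_{k'}}{\Sigma_{k'}}$ (all $k' \leq k$) and yields $\T \vdash \DNE{\Sigma_k} + \DNS{\U_k^+}$. Combining with the previously derived $\DNE{(\Pi_k \lor \Pi_k)}$ gives the desired conclusion $\T \vdash \DNE{\Sigma_k} + \DNE{(\Pi_k \lor \Pi_k)}$.
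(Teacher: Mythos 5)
Your proposal is correct and takes essentially the same route as the paper: the ``only if'' direction via Corollary \ref{cor: pk v pk -DNE -> Ukp-DNE} and Theorem \ref{thm: PNFT}, and the ``if'' direction by first extracting $\DNE{(\Pi_k \lor \Pi_k)}$ from Theorem \ref{thm: characterization of PNFT(Ukp,Pk)} and then feeding Theorem \ref{thm: characterization of PNFT(Ek,Sk)}. The only divergence is in how the hypothesis $\T \supseteq \ha + \LEM{\Pi_{k-1}}$ is secured: the paper simply cites \cite[Theorem 3.1(1)]{ABHK04} for $\DNE{(\Pi_k \lor \Pi_k)} \vdash \LEM{\Pi_{k-1}}$, whereas you re-derive this implication from Lemma \ref{lem: k -> k-1}.\eqref{item: (p_k+1 v p_k+1)-DNE -> s_k-DNE} and Lemma \ref{lem: NP and NS}.\eqref{item: NP}, which is a correct, self-contained alternative to the citation.
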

\begin{proof}
Let $\T $ be a theory in-between $\ha $
%+ \LEM{\Pi_{k-1}}$ $(\ha$ if $k=0)$ 
and $\pa$.
The ``only if'' direction follows from Theorem \ref{thm: PNFT}, Corollary \ref{cor: pk v pk -DNE -> Ukp-DNE} and Lemma \ref{lem: sigma_k and pi_k}.

For the converse direction, assume that $\PNFT{\E_{k'}}{\Sigma_{k'}}$ and $\PNFT{\U_{k'}}{\Pi_{k'}}$ hold for all $k' \leq k$.
By Theorems \ref{thm: characterization of PNFT(Ukp,Pk)}, we have $\T \vdash \DNE{\left( \Pi_k \lor \Pi_k \right)}$.
%Assume $\T \vdash \DNE{\Sigma_k} + \DNE{\left( \Pi_k \lor \Pi_k \right)}$.
Since $\LEM{\Pi_{k-1}}$ is derived from $ \DNE{\left( \Pi_k \lor \Pi_k \right)}$ (cf. \cite[Theorem 3.1(1)]{ABHK04}), by Theorem \ref{thm: characterization of PNFT(Ek,Sk)}, we also have $\T \vdash \DNE{\Sigma_k}$.
\end{proof}

In the following, we show the optimality of Theorem \ref{thm: PNFT for df-formulas} (see Theorem \ref{thm: Optimality of PNFT for df-formulas}).

\begin{lemma}\label{lem: PNFT(NNUkm, Pk) + Pk-2-LEM => Sk-1-DNE}
Let $\T $ be a theory in-between $\ha +\LEM{\Pi_{k-2}} $ $(\ha$ if $k<2)$ and $\pa$.
If $\PNFT{\df{\left(\dn{\U_{k'}}\right)}}{\Pi_{k'}}$ for all $k' \leq k$, then $\T \vdash \DNE{\Sigma_{k-1}}$.
\end{lemma}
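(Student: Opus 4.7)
The case $k<2$ is immediate: $\DNE{\Sigma_{-1}}$ is vacuous and $\DNE{\Sigma_0}$ is intuitionistically provable. So assume $k\ge 2$. The strategy mirrors Lemma~\ref{lem: PNFT(Ekm, Sk) + Pk-1-LEM => Sk-DNE}: translate an instance of $\DNE{\Sigma_{k-1}}$ via the hypothesized prenex normal form theorem into a $\Pi_k$-shape, apply Theorem~\ref{thm: conservation results for PNF -> Pi_k+2} to descend from $\pa$ to a weak semi-classical base, and close using $\T \supseteq \ha + \LEM{\Pi_{k-2}}$.

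Fix $\vp_1(x) \in \Sigma_{k-1}(x)$; by \cite[Proposition~3.8]{Koh08} one may assume $\vp_1$ contains no $\lor$. The delicate point is that $\vp_1 \in \E_{k-1}$ (a strict $\Sigma_{k-1}$-formula has its longest alternation paths starting with $+$), hence $\neg\neg\vp_1 \in \E_{k-1}$ as well, and so is not captured by $\df{(\dn{\U_{k'}})}$ for any $k'\le k-1$. The trick is to pick a fresh $y\notin \FV{\vp_1}$ and work with $\forall y\,\vp_1(x)$ instead: a direct inspection of alternation paths shows $\forall y\,\vp_1 \in \U_k$, and since double negation preserves signs, $\neg\neg\forall y\,\vp_1 \in \dn{\U_k}$; as $\vp_1$ contains no $\lor$, neither does $\neg\neg\forall y\,\vp_1(x)$, placing it in $\df{(\dn{\U_k})}(x)$. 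By the hypothesis $\PNFT{\df{(\dn{\U_k})}}{\Pi_k}$ there is $\rho(x) \in \Pi_k(x)$ with $\T \vdash \neg\neg\forall y\,\vp_1(x) \lr \rho(x)$, which, as $y$ is fresh, gives $\T \vdash \neg\neg\vp_1(x) \lr \rho(x)$.

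Since $\pa \vdash \forall x(\neg\neg\vp_1(x) \to \vp_1(x))$ and $\T \subseteq \pa$, we obtain $\pa \vdash \rho(x) \to \vp_1(x)$. Viewing $\vp_1 \in \Sigma_{k-1}$ as a $\Pi_k$-formula via Lemma~\ref{lem: sigma_k and pi_k} and invoking Theorem~\ref{thm: conservation results for PNF -> Pi_k+2} (with parameter $k-2$) yields $\ha + \LEM{\Sigma_{k-2}} \vdash \rho(x) \to \vp_1(x)$. By \cite[Theorem~3.1(ii)]{ABHK04}, $\LEM{\Pi_{k-2}}$ intuitionistically implies $\LEM{\Sigma_{k-2}}$, so $\T \supseteq \ha + \LEM{\Pi_{k-2}}$ already proves $\rho(x) \to \vp_1(x)$. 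Combining with $\T \vdash \neg\neg\vp_1(x) \lr \rho(x)$ and generalizing over $x$ produces the desired instance of $\DNE{\Sigma_{k-1}}$.

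The main obstacle is precisely the class mismatch noted above: unlike in Lemma~\ref{lem: PNFT(Ekm, Sk) + Pk-1-LEM => Sk-DNE}, where $\df{\E_k}$ is the natural landing class for $\neg\neg\Sigma_k$-formulas, our hypothesis offers $\df{(\dn{\U_{k'}})}$, which lives on the $\forall$-side of the hierarchy. The dummy universal quantifier resolves this at no logical cost because $y$ is fresh and hence intuitionistically redundant, but the argument crucially exploits the availability of the hypothesis at level $k' = k$, one level higher than the target $\DNE{\Sigma_{k-1}}$.
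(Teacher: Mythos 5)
Most of your argument matches the paper's route: the dummy quantifier $\forall y$ is a clean way to place $\neg\neg\vp_1$ into $\df{\left(\dn{\U_k}\right)}$ (the paper instead appeals to the identification of Remark \ref{rem: identification on Sigma_k and Pi_k}), and using the hypothesis at level $k'=k$ together with Theorem \ref{thm: conservation results for PNF -> Pi_k+2} at parameter $k-2$ is exactly what the paper does. The gap is your last step: you claim that, by \cite[Theorem 3.1(ii)]{ABHK04}, $\LEM{\Pi_{k-2}}$ intuitionistically implies $\LEM{\Sigma_{k-2}}$. That is false, and it is not what that theorem says; the statement (and the way this paper invokes it, e.g.\ in Lemma \ref{lem: PNFT(Ekm, Sk) + Pk-1-LEM => Sk-DNE}) is that $\LEM{\Sigma_{j}}$ is equivalent over $\ha$ to $\LEM{\Pi_{j}}$ \emph{together with} $\DNE{\Sigma_{j}}$. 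For $j=1$ your claim would make $\LEM{\Pi_1}$ imply $\LEM{\Sigma_1}$, i.e.\ imply Markov's principle, contradicting the strictness of the hierarchy established in \cite{ABHK04}. So for $k\geq 3$ you cannot pass from $\ha+\LEM{\Sigma_{k-2}}\vdash \rho(x)\to\vp_1(x)$ (which the conservation theorem gives you) to $\T\vdash \rho(x)\to\vp_1(x)$, and the proof does not close.

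What is missing is precisely $\DNE{\Sigma_{k-2}}$ in $\T$, and this is why the paper proves the lemma by induction on $k$: the lemma at level $k-1$ applies (since $\T\supseteq\ha+\LEM{\Pi_{k-2}}\supseteq\ha+\LEM{\Pi_{k-3}}$ and the $\mathrm{PNFT}$ hypothesis is available for all $k'\leq k-1$) and yields $\T\vdash\DNE{\Sigma_{k-2}}$; combined with $\LEM{\Pi_{k-2}}$ this gives $\T\vdash\LEM{\Sigma_{k-2}}$ by \cite[Theorem 3.1(ii)]{ABHK04}, and only then can the conservation result be transferred to $\T$. If you add this induction (your non-inductive argument is fine only for $k\leq 2$, where $\LEM{\Sigma_{k-2}}$ is trivial), the remainder of your proof goes through and is in substance the paper's proof.
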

\begin{proof}
By induction on $k$.
The base case is trivial.
For the induction step, assume the assertion for $k$ and let $\T$ be a theory in-between $\ha +\LEM{\Pi_{k-1}}$ and $\pa$.
Assume also that $\PNFT{\df{\left(\dn{\U_{k'}}\right)}}{\Pi_{k'}}$ holds for all $k' \leq k+1$.
Then, by induction hypothesis, $\T$ proves $\DNE{\Sigma_{k-1}}$.
Since $\T$ contains $\ha +\LEM{\Pi_{k-1}},$ we have $\T \vdash \LEM{\Sigma_{k-1}}$ by \cite[Theorem 3.1(ii)]{ABHK04}.
Fix an instance of $\DNE{\Sigma_{k}}$
$$
\vp :\equiv \forall x ( \neg \neg \vp_1(x) \to \vp_1(x)),
$$
where $\vp_1(x) \in \Sigma_{k} (x)$.
Without loss of generality, one can assume that $\vp_1(x)$ does not contain $\lor$ (cf. \cite[Proposition 3.8]{Koh08}).
From  the  perspective  of  Remark \ref{rem: identification on Sigma_k and Pi_k},  $\vp_1(x)$ is in $\Pi_{k+1} (x)$.
Then, by $\PNFT{\df{\left(\dn{\U_{k+1}}\right)}}{\Pi_{k+1}}$, there exists $\vp_1'(x)\in \Pi_{k}(x)$ such that $\T \vdash \neg \neg \vp_1(x) \lr \vp_1' (x)$.
Since $\pa$ is an extension of $\T$ and $\pa \vdash \vp $, we have $\pa \vdash  \vp_1' (x) \to \vp_1(x)$.
By Theorem \ref{thm: conservation results for PNF -> Pi_k+2}, we have that
$\ha + \LEM{\Sigma_{k-1}}$ proves  $\vp_1' (x) \to \vp_1(x)$, and hence,
$\forall x \left( \vp_1' (x) \to \vp_1(x) \right)$.
Since $\T$ is an extension of $\ha + \LEM{\Sigma_{k-1}}$, we have
$T \vdash \vp$.
%Thus we have shown $\T \vdash \DNE{\Sigma_{k}}$.
\end{proof}

\begin{theorem}
\label{thm: Optimality of PNFT for df-formulas}
\noindent
\begin{enumerate}
    \item 
        \label{item: Characterization of PNFT(E_k^df, Sigmak)}
    Let $\T $ be a theory in-between $\ha + \LEM{\Pi_{k-1}}$ $(\ha$ if $k=0)$ 
and $\pa$.
%n extension of $\ha$ and  subtheory of $\pa$.
Then $\T \vdash \DNE{\Sigma_k}$  if and only if $\PNFT{\df{\E_{k'}}}{\Sigma_{k'}}$ for all $k' \leq k$.
    \item
    \label{item: Characterization of PNFT(U_k^df, Pik)}
   Let $\T $ be a theory in-between $\ha + \LEM{\Pi_{k-2}}$ $(\ha$ if $k<2)$ 
and $\pa$.
%n extension of $\ha$ and  subtheory of $\pa$.
Then $\T \vdash \DNE{\Sigma_{k-1}}$  if and only if $\PNFT{\df{\U_{k'}}}{\Pi_{k'}}$ for all $k' \leq k$.
\end{enumerate}
\end{theorem}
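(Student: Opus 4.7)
The plan is to handle the two ``only if'' directions uniformly via Theorem \ref{thm: PNFT for df-formulas}, to settle the ``if'' direction of item \ref{item: Characterization of PNFT(E_k^df, Sigmak)} by direct appeal to Lemma \ref{lem: PNFT(Ekm, Sk) + Pk-1-LEM => Sk-DNE}, and to settle the ``if'' direction of item \ref{item: Characterization of PNFT(U_k^df, Pik)} by a short bridging step combined with Lemma \ref{lem: PNFT(NNUkm, Pk) + Pk-2-LEM => Sk-1-DNE}.

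For the ``only if'' directions, fix $k' \leq k$ and a formula $\vp$ in $\df{\E_{k'}}$ (respectively $\df{\U_{k'}}$). Since $\df{\E_{k'}} \subseteq \df{\E_{k'}^{+}}$ and $\df{\U_{k'}} \subseteq \df{\U_{k'}^{+}}$, Theorem \ref{thm: PNFT for df-formulas} supplies a prenex companion $\vp' \in \Sigma_{k'}$ (respectively $\Pi_{k'}$) such that $\ha + \DNE{\Sigma_{k'}} \vdash \vp \lr \vp'$ (respectively $\ha + \DNE{\Sigma_{k'-1}} \vdash \vp \lr \vp'$). Remark \ref{rem: identification on Sigma_k and Pi_k} identifies $\Sigma_{k'}$ with a subclass of $\Sigma_k$, so $\DNE{\Sigma_k}$ derives $\DNE{\Sigma_{k'}}$ (and $\DNE{\Sigma_{k-1}}$ derives $\DNE{\Sigma_{k'-1}}$), and the equivalence lifts to $\T$.

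For the ``if'' direction of item \ref{item: Characterization of PNFT(E_k^df, Sigmak)}, the hypothesis on $\T$ is exactly the premise of Lemma \ref{lem: PNFT(Ekm, Sk) + Pk-1-LEM => Sk-DNE}, so $\T \vdash \DNE{\Sigma_k}$ is immediate. The ``if'' direction of item \ref{item: Characterization of PNFT(U_k^df, Pik)} is the only step that needs genuine work, because the matching finishing lemma (Lemma \ref{lem: PNFT(NNUkm, Pk) + Pk-2-LEM => Sk-1-DNE}) requires $\PNFT{\df{(\dn{\U_{k'}})}}{\Pi_{k'}}$ rather than the given $\PNFT{\df{\U_{k'}}}{\Pi_{k'}}$. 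The hard part will therefore be verifying that $\df{\U_{k'}}$ is closed under double negation: a direct induction on the construction of $\alt{\cdot}$ yields $\alt{\neg\neg\vp} = \alt{\vp} \cup \{\ang{\,}\}$, and since the extra empty path has length $0$, for $k' \geq 1$ it neither enlarges the degree nor contributes any maximal-length path, so $\vp \in \U_{k'}$ forces $\neg\neg\vp \in \U_{k'}$; as $\neg\neg\vp$ contains no $\lor$ precisely when $\vp$ does not, we obtain $\neg\neg\vp \in \df{\U_{k'}}$. Applying $\PNFT{\df{\U_{k'}}}{\Pi_{k'}}$ to $\neg\neg\vp$ yields $\PNFT{\df{(\dn{\U_{k'}})}}{\Pi_{k'}}$ for every $k' \leq k$, and Lemma \ref{lem: PNFT(NNUkm, Pk) + Pk-2-LEM => Sk-1-DNE} then concludes $\T \vdash \DNE{\Sigma_{k-1}}$.
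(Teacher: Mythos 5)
Your proof is correct and takes essentially the same route as the paper: the ``only if'' directions via Theorem \ref{thm: PNFT for df-formulas}, item \ref{item: Characterization of PNFT(E_k^df, Sigmak)} via Lemma \ref{lem: PNFT(Ekm, Sk) + Pk-1-LEM => Sk-DNE}, and item \ref{item: Characterization of PNFT(U_k^df, Pik)} via Lemma \ref{lem: PNFT(NNUkm, Pk) + Pk-2-LEM => Sk-1-DNE} together with the inclusion $\df{\left(\dn{\U_{k'}}\right)} \subseteq \df{\U_{k'}}$, which is precisely your double-negation closure step (the paper states it as a one-line ``Note''). The only cosmetic point is that with the paper's primitive clause for $\neg$ in Definition \ref{def: ALT(vp)} one gets $\alt{\neg\neg\vp}=\alt{\vp}$ exactly rather than $\alt{\vp}\cup\{\ang{\,}\}$, which does not affect your argument.
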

\begin{proof}
\eqref{item: Characterization of PNFT(E_k^df, Sigmak)}:
The ``only if'' direction is by Theorem \ref{thm: PNFT for df-formulas}.\eqref{item: Characterization of PNFT(E_k^df, Sigmak)}.
The converse direction is by Lemma \ref{lem: PNFT(Ekm, Sk) + Pk-1-LEM => Sk-DNE}.
 
\eqref{item: Characterization of PNFT(U_k^df, Pik)}:
The ``only if'' direction is by Theorem \ref{thm: PNFT for df-formulas}.\eqref{item: Characterization of PNFT(U_k^df, Pik)}.
Note that any formula in $\df{\left(\dn{\U_{k'}}\right)}$ is in $\df{\U_{k'}}$.
Then the converse direction follows from Lemma \ref{lem: PNFT(NNUkm, Pk) + Pk-2-LEM => Sk-1-DNE}.
\end{proof}

At the end of this section, we characterize some variants of prenex normal form theorems.
%In particular, we show the optimality of the prenex normal form theorems for
%$\E_{k}^{\rm df}, \U_{k}^{\rm df} (Theorem \ref{thm: Optimality of PNFT for df-formulas}) and $\dn{\U_k}$.
%$\PNFT{\E_{k}^{\rm df}}{\Sigma_{k}}$,
%$\PNFT{\U_{k}^{\rm df}}{\Sigma_{k}}$
% and 

\begin{theorem}
\label{thm: characterization of PNFT(NEk,Pk)}
Let $\T $ be a theory in-between $\ha + \LEM{\Pi_{k-2}}$ $(\ha$ if $k<2)$ and $\pa$.
%n extension of $\ha$ and  subtheory of $\pa$.
Then $\T \vdash \DNE{\Sigma_{k-1}} +\DNS{\U_k^+}$ if and only if $\PNFT{\dn{\U_{k'}}}{\Pi_{k'}}$ for all $k' \leq k$.
\end{theorem}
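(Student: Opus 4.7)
The plan is to combine Lemma \ref{PNFT_of_NE_k_and_NNU_k} (which converts $\dn{\U_k^+}$ into $\dn{\Pi_k}$ using $\DNS{\U_k^+}$) and Lemma \ref{lem: PNFT(NNUkm, Pk) + Pk-2-LEM => Sk-1-DNE} (which extracts $\DNE{\Sigma_{k-1}}$ from the disjunction-free fragment of the target prenex theorem), with Theorem \ref{thm: characterization of PNFT(NNUkp,NNPk)} as the final bridge. The double negation elimination $\DNE{\Sigma_{k-1}}$ serves as the two-way converter between $\Pi_{k'}$ and $\dn{\Pi_{k'}}$ via Lemma \ref{lem: k -> k-1}.\eqref{item: sk-DNE -> pk+1-DNE}.

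For the ``only if'' direction, take $\vp \equiv \neg\neg\psi \in \dn{\U_{k'}}$ with $\psi \in \U_{k'} \subseteq \U_{k'}^+ \subseteq \U_k^+$. Lemma \ref{PNFT_of_NE_k_and_NNU_k} applied at level $k'$ supplies $\psi' \in \Pi_{k'}$ with $\FV{\psi} = \FV{\psi'}$ and $\ha + \DNS{\U_{k'}^+} \vdash \neg\neg\psi \lr \neg\neg\psi'$; since $\U_{k'}^+ \subseteq \U_k^+$, the hypothesis $\DNS{\U_k^+}$ suffices. From $\DNE{\Sigma_{k-1}}$ one obtains $\DNE{\Pi_{k'}}$ for each $k' \leq k$ by Lemma \ref{lem: k -> k-1}.\eqref{item: sk-DNE -> pk+1-DNE}, so $\neg\neg\psi' \lr \psi'$ over $\T$. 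Concatenating yields $\T \vdash \vp \lr \psi'$ with $\psi' \in \Pi_{k'}$ and the correct free variables, establishing $\PNFT{\dn{\U_{k'}}}{\Pi_{k'}}$.

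For the ``if'' direction, observe that the assumption $\PNFT{\dn{\U_{k'}}}{\Pi_{k'}}$ trivially specializes to $\PNFT{\df{(\dn{\U_{k'}})}}{\Pi_{k'}}$ for all $k' \leq k$. Since $\T$ extends $\ha + \LEM{\Pi_{k-2}}$, Lemma \ref{lem: PNFT(NNUkm, Pk) + Pk-2-LEM => Sk-1-DNE} immediately gives $\T \vdash \DNE{\Sigma_{k-1}}$. To obtain $\DNS{\U_k^+}$, I would first upgrade the assumption to $\PNFT{\dn{\U_{k'}}}{\dn{\Pi_{k'}}}$ for all $k' \leq k$: given $\vp \in \dn{\U_{k'}}$ equivalent to $\vp' \in \Pi_{k'}$ over $\T$, the now-available $\DNE{\Pi_{k'}}$ yields $\T \vdash \vp \lr \neg\neg\vp'$, and $\neg\neg\vp' \in \dn{\Pi_{k'}}$. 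Applying Theorem \ref{thm: characterization of PNFT(NNUkp,NNPk)} (valid for any $\T$ in-between $\ha$ and $\pa$) then delivers $\T \vdash \DNS{\U_k^+}$. I do not anticipate a genuine obstacle here; the only point needing care is bookkeeping of which of the two hypotheses on $\T$ are invoked where: the $\LEM{\Pi_{k-2}}$ assumption is used solely to apply Lemma \ref{lem: PNFT(NNUkm, Pk) + Pk-2-LEM => Sk-1-DNE}, while the ``only if'' direction needs nothing beyond $\T$ extending $\ha$.
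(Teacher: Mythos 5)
Your proposal is correct and follows essentially the same route as the paper: the ``only if'' direction via Lemma \ref{PNFT_of_NE_k_and_NNU_k} together with $\DNE{\Sigma_{k-1}}$ (through Lemma \ref{lem: k -> k-1}.\eqref{item: sk-DNE -> pk+1-DNE}) to remove the outer double negation, and the ``if'' direction by extracting $\DNE{\Sigma_{k-1}}$ from Lemma \ref{lem: PNFT(NNUkm, Pk) + Pk-2-LEM => Sk-1-DNE} and then upgrading to $\PNFT{\dn{\U_{k'}}}{\dn{\Pi_{k'}}}$ so that Theorem \ref{thm: characterization of PNFT(NNUkp,NNPk)} yields $\DNS{\U_k^+}$. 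The only cosmetic difference is that your upgrade step invokes $\DNE{\Pi_{k'}}$, whereas it already holds intuitionistically because the left-hand formulas are doubly negated; this does not affect correctness.
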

\begin{proof}
We first show the ``only if'' direction.
Let $k>0$ without loss of generality.
Assume $\T \vdash \DNE{\Sigma_{k-1}} +\DNS{\U_k^+}$ and fix $k' \leq k$.
%Since $\DNE{\Pi_{k'}}$ implies $\DNS{\Pi_{k'}}$, by Theorem \ref{thm: PNFT}.\eqref{eq: item for Pi_k in PNFT} and Lemma \ref{lem: k -> k-1}.\eqref{item: sk-DNE -> pk+1-DNE}, we have $\T \vdash \DNS{\U_{k'}^+}$. 
Since $\T \vdash \DNS{\U_{k'}^+}$, by Lemma \ref{PNFT_of_NE_k_and_NNU_k}, for any $\vp \in \U_{k'}^+$, there exists $\vp'\in \Pi_{k'}$ such that $\FV{\vp} = \FV{\vp'}$ and
$$
% \ha + \DNS{\U_{k'}^+} 
\T \vdash \neg \neg \vp \lr \neg \neg \vp' .
$$
Since $\T \vdash \DNE{\Sigma_{k-1}}$, by Lemma \ref{lem: k -> k-1}.\eqref{item: sk-DNE -> pk+1-DNE}, we have $T \vdash  \neg \neg  \vp' \lr \vp'$, and hence, $T \vdash  \neg \neg  \vp \lr \vp'$.
Thus we have $\PNFT{\dn{\U_{k'}}}{\Pi_{k'}}$.

Next, we show the converse direction.
Assume that $\PNFT{\dn{\U_{k'}}}{\Pi_{k'}}$ holds for all $k' \leq k$.
By Lemma \ref{lem: PNFT(NNUkm, Pk) + Pk-2-LEM => Sk-1-DNE}, we have $\T \vdash \DNE{\Sigma_{k-1}}$.
\begin{comment}
by induction on $k$.
The base case is trivial.
For the induction step, assume the assertion for $k$ to show that for $k+1$.
Assume that $\T$ is a theory in-between $\ha + \LEM{\Pi_{k-1}}$ and $\pa$, and $\PNFT{\dn{\U_{k'}}}{\Pi_{k'}}$ holds for all $k' \leq k+1$.
We first show $\T \vdash \DNE{\Sigma_{k}}$.
By induction hypothesis, we have $\T \vdash \DNE{\Sigma_{k-1}}$.
Since $T$ contains $\LEM{\Pi_{k-1}}$, by \cite[Theorem 3.1.(ii)]{ABHK04}, we have $\T \vdash \LEM{\Sigma_{k-1}}$.
Fix an instance of $\DNE{\Sigma_k}$
$$
\vp :\equiv \forall x (\neg \neg \vp_1(x) \to \vp_1(x))
$$
where $ \vp_1(x) \in \Sigma_k(x)$.
From the perspective of Remark \ref{rem: identification on Sigma_k and Pi_k}, $ \vp_1(x)$ is in $\Pi_{k+1}(x)$.
Then, by 
%Lemma \ref{lem: sigma_k and pi_k} and 
$\PNFT{\dn{\U_{k+1}}}{\Pi_{k+1}}$, there exists $\vp_1'(x)\in \Pi_{k+1}(x)$ such that $\T \vdash \neg \neg \vp_1(x) \lr \vp_1' (x)$.
Since $\pa$ is an extension of $\T$ and $\pa \vdash \neg \neg \vp_1(x) \to \vp_1(x)$, we have $\pa \vdash  \vp_1' (x) \to \vp_1(x)$.
By Theorem \ref{thm: conservation results for PNF -> Pi_k+2}, we have
$$
\ha + \LEM{\Sigma_{k-1}} \vdash \vp_1' (x) \to \vp_1(x),
$$
and hence,
$$
\T \vdash \neg \neg \vp_1 (x) \to \vp_1(x).
$$
%by \eqref{eq: T |- NNvp_1 <-> vp_1''}.
Then $T \vdash \vp$.
Thus we have shown $\T \vdash \DNE{\Sigma_k}$.
\end{comment}
%Then, as in the proof of ``only if'' direction, 
On the other hand, by the assumption, we have $\PNFT{\dn{\U_{k'}}}{\dn{\Pi_{k'}}}$ for all $k' \leq k$, and hence,
%By Theorem \ref{thm: characterization of PNFT(NEk,Pk)}, we have also 
$\T \vdash \DNS{\U_{k}^+}$
by Theorem \ref{thm: characterization of PNFT(NNUkp,NNPk)}.
\end{proof}

%The following theorem states 
\begin{theorem}
\label{thm: characterization of PNFT(NEkdf, NNPk) and PNFT(NNUkdf, NNPk)}
%Let $\T $ be a theory in-between $\ha + \neg \neg \LEM{\Pi_{k-2}}$ $(\ha$ if $k<2)$ and $\pa$.
%Then $\T \vdash \neg \neg \DNE{\Sigma_{k-1}}$ if and only if $\PNFT{\df{\left(\n{\E_{k'}}\right)}}{ \dn{\Pi_{k'}}}$ for all $k' \leq k$.
Let $\T $ be a theory in-between $\ha + \neg \neg \LEM{\Pi_{k-2}}$ $(\ha$ if $k<2)$ and $\pa$.
The following are pairwise equivalent:
\begin{enumerate}
    \item 
    \label{item: PNFT(NEkdf, NNPk)}
    $\PNFT{\df{\left(\n{\E_{k'}}\right)}}{ \dn{\Pi_{k'}}}$ for all $k' \leq k$;
    \item
        \label{item: PNFT(NNUkdf, NNPk)}
    $\PNFT{\df{\left(\dn{\U_{k'}}\right)}}{ \dn{\Pi_{k'}}}$ for all $k' \leq k$;
  %  \item
%        \label{item: T|-NNPkPk-DNE}
 %   $\T \vdash \NN{\DNE{(\Pi_k\lor \Pi_k)}}$;
    \item
    \label{item: T|-NNSk-1DNE}
    $\T \vdash   \neg \neg \DNE{\Sigma_{k-1}}$.
\end{enumerate}
\end{theorem}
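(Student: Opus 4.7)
My plan is to prove the three-way equivalence by separately handling $(1) \Leftrightarrow (2)$, $(3) \Rightarrow (2)$, and $(2) \Rightarrow (3)$.

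For $(1) \Leftrightarrow (2)$, the two classes $\df{(\n{\E_{k'}})}$ and $\df{(\dn{\U_{k'}})}$ coincide up to intuitionistic equivalence. Indeed, by Lemma \ref{lem: basic facts on our classes}.\eqref{item: EU to} (applied with $\vp_2 \equiv \perp$), if $\chi \in \E_{k'}$ contains no $\lor$ then $\neg \chi$ is in $\U_{k'}^+$ and contains no $\lor$, and intuitionistically $\neg \chi \lr \neg\neg\neg\chi$; the dual claim applies to $\U_{k'}$. Combined with Remark \ref{rem: Ekp and Ukp}, every instance of one PNFT scheme can be translated into an instance of the other in $\ha$.

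For $(3) \Rightarrow (2)$, given $\vp \equiv \neg\neg \chi$ with $\chi \in \U_{k'}$ lacking $\lor$ and $k' \leq k$, Theorem \ref{thm: PNFT for df-formulas}.\eqref{eq: item for Pi_k in PNFT for df-formulas} supplies $\chi' \in \Pi_{k'}$ with $\FV{\chi} = \FV{\chi'}$ and $\ha + \DNE{\Sigma_{k'-1}} \vdash \chi \lr \chi'$. Corollary \ref{cor: HA+P |- vp_1 <-> vp_2 => HA+NNP |- NNvp_1 <-> NNvp_2} then gives $\ha + \neg\neg \DNE{\Sigma_{k'-1}} \vdash \neg\neg \chi \lr \neg\neg \chi'$, and since $\T \vdash \neg\neg\DNE{\Sigma_{k-1}}$ yields $\T \vdash \neg\neg \DNE{\Sigma_{k'-1}}$ for every $k' \leq k$ (via Lemma \ref{lem: k -> k-1}.\eqref{item: (p_k+1 v p_k+1)-DNE -> s_k-DNE} and Lemma \ref{lem: HA+P |- vp => HA+NNP |- NNvp}), we conclude $\T \vdash \vp \lr \neg\neg \chi'$ with $\neg\neg\chi' \in \dn{\Pi_{k'}}$.

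For $(2) \Rightarrow (3)$, I would mirror the proof of Lemma \ref{lem: PNFT(NNUkm, Pk) + Pk-2-LEM => Sk-1-DNE} with $\neg\neg$ inserted at appropriate points, proceeding by induction on $k$. The cases $k \leq 1$ are trivial. For the induction step, the induction hypothesis combined with $\PNFT{\df{\dn{\U_{k'}}}}{\dn{\Pi_{k'}}}$ restricted to $k' \leq k-1$ delivers $\T \vdash \neg\neg \DNE{\Sigma_{k-2}}$, and from $\T \supseteq \ha + \neg\neg\LEM{\Pi_{k-2}}$ together with \cite[Theorem 3.1(ii)]{ABHK04} and Lemma \ref{lem: HA+P |- vp => HA+NNP |- NNvp} we obtain $\T \vdash \neg\neg\LEM{\Sigma_{k-2}}$. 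Given an instance $\vp \equiv \forall x (\neg\neg \vp_1(x) \to \vp_1(x))$ of $\DNE{\Sigma_{k-1}}$ with $\vp_1 \in \Sigma_{k-1}(x)$ free of $\lor$ (using \cite[Proposition 3.8]{Koh08}), view $\vp_1 \in \Pi_k$ so that $\neg\neg \vp_1 \in \df{\dn{\U_k}}$; the hypothesis yields $\vp_1'(x) \in \Pi_k$ with $\T \vdash \neg\neg \vp_1(x) \lr \neg\neg \vp_1'(x)$. Since $\pa$ proves $\vp$, it follows that $\pa \vdash \forall x (\vp_1'(x) \to \vp_1(x))$, which, as classically $\Pi_{k+1}$ after prenex manipulation, falls under Theorem \ref{thm: conservation results for PNF -> Pi_k+2}, giving $\ha + \LEM{\Sigma_{k-1}} \vdash \forall x (\vp_1'(x) \to \vp_1(x))$. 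Applying Lemma \ref{lem: HA+P |- vp => HA+NNP |- NNvp} yields $\ha + \neg\neg\LEM{\Sigma_{k-1}} \vdash \neg\neg \forall x (\vp_1'(x) \to \vp_1(x))$, and a final unfolding inside the double negation using $\T \vdash \neg\neg \vp_1 \lr \neg\neg \vp_1'$ produces $\T \vdash \neg\neg \vp$.

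The main obstacle will be the careful calibration of complexity levels: pushing $\forall x(\vp_1'(x) \to \vp_1(x))$ into literal $\Pi_{k+1}$ form so that Theorem \ref{thm: conservation results for PNF -> Pi_k+2} applies at level $k-1$ (thus demanding only $\LEM{\Sigma_{k-1}}$), and ensuring that $\T$ indeed proves $\neg\neg\LEM{\Sigma_{k-1}}$ at the relevant inductive stage from its base axiom $\neg\neg\LEM{\Pi_{k-2}}$ together with the inductive strength. This parallels the indexing manoeuvre in Lemma \ref{lem: PNFT(NNUkm, Pk) + Pk-2-LEM => Sk-1-DNE}, so the same care applied there suffices here.
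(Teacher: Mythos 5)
Your handling of $(1)\Leftrightarrow(2)$ and of $(3)\Rightarrow(2)$ is correct and essentially the paper's route: the paper gets $(3)\Rightarrow(1)$ from the auxiliary item \ref{eq: item for NE_k in PNFT for df-formulas} in the proof of Theorem \ref{thm: PNFT for df-formulas}, which is exactly your combination of Theorem \ref{thm: PNFT for df-formulas}.\eqref{eq: item for Pi_k in PNFT for df-formulas} with Corollary \ref{cor: HA+P |- vp_1 <-> vp_2 => HA+NNP |- NNvp_1 <-> NNvp_2} (your citation of Lemma \ref{lem: k -> k-1} for the monotonicity of $\DNE{\Sigma_j}$ in $j$ is the wrong pointer, but the fact is trivial).

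The genuine gap is in $(2)\Rightarrow(3)$, at the ``final unfolding inside the double negation''. Because the target class of your hypothesis is $\dn{\Pi_{k}}$, applying it to the subformula $\neg\neg\vp_1(x)$ only yields $\T \vdash \neg\neg\vp_1(x)\lr\neg\neg\vp_1'(x)$, and after conservation and Lemma \ref{lem: HA+P |- vp => HA+NNP |- NNvp} you hold $\neg\neg\forall x\left(\vp_1'(x)\to\vp_1(x)\right)$. These two facts do not intuitionistically yield $\neg\neg\forall x\left(\neg\neg\vp_1(x)\to\vp_1(x)\right)$: under the outer negations you only get $\neg\neg\vp_1\to\neg\neg\vp_1'$ and $\vp_1'\to\vp_1$, and the missing link $\neg\neg\vp_1'\to\vp_1'$ is double negation elimination for $\Pi_k$, i.e.\ (via Lemma \ref{lem: k -> k-1}.\eqref{item: sk-DNE -> pk+1-DNE}) essentially the $\NN{\DNE{\Sigma_{k-1}}}$ you are trying to prove, so the step is circular. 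Concretely, taking $\vp_1'$ to be $\vp_1$ itself (padded by a dummy universal quantifier to lie in $\Pi_k$) satisfies both premises already over $\ha$, while the conclusion is an instance of $\NN{\DNE{\Sigma_{k-1}}}$, which $\ha$ does not prove (already for $\Sigma_1$, cf.\ Section \ref{sec: CE} and \cite{FK18}). Your closing ``calibration'' remark points the same way: you would need $\T\vdash\NN{\LEM{\Sigma_{k-1}}}$, but at this inductive stage the base $\NN{\LEM{\Pi_{k-2}}}$ together with the induction hypothesis $\NN{\DNE{\Sigma_{k-2}}}$ only gives $\NN{\LEM{\Sigma_{k-2}}}$, and no re-prenexing of $\forall x(\vp_1'\to\vp_1)$ changes that.

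The paper avoids this by never decomposing the instance: the whole sentence $\vp:\equiv\neg\neg\forall x\left(\neg\neg\vp_1(x)\to\vp_1(x)\right)$ is $\lor$-free and equals $\neg\chi$ with $\chi:\equiv\neg\forall x\left(\neg\neg\vp_1(x)\to\vp_1(x)\right)\in\E_{k}$, so item (1) applies to $\vp$ itself and gives one sentence $\vp'\in\Pi_{k}$ with $\T\vdash\vp\lr\neg\neg\vp'$. Then $\pa\vdash\vp'$, Theorem \ref{thm: conservation results for PNF -> Pi_k+2} (at level $k-2$, since $\vp'\in\Pi_k$) gives $\ha+\LEM{\Sigma_{k-2}}\vdash\vp'$, and Lemma \ref{lem: HA+P |- vp => HA+NNP |- NNvp} gives $\ha+\NN{\LEM{\Sigma_{k-2}}}\vdash\neg\neg\vp'$, which is available in $\T$; hence $\T\vdash\vp$. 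Note that this needs only $\NN{\LEM{\Sigma_{k-2}}}$, exactly what your induction provides, so the fix is to translate the entire double-negated instance rather than its $\Sigma_{k-1}$ kernel.
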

\begin{proof}
The  equivalence  of  \eqref{item: PNFT(NEkdf, NNPk)}  and  \eqref{item: PNFT(NNUkdf, NNPk)}  is  trivial  (cf.   the  proof  of  Lemma \ref{PNFT_of_NE_k_and_NNU_k}).
In addition, $(\ref{item: T|-NNSk-1DNE} \to \ref{item: PNFT(NEkdf, NNPk)})$ is immediate from the item \ref{eq: item for NE_k in PNFT for df-formulas} in the proof of Theorem \ref{thm: PNFT for df-formulas}.
Then it suffices to show $( \ref{item: PNFT(NEkdf, NNPk)} \to \ref{item: T|-NNSk-1DNE})$.
We show this by induction on $k$.
The base case is trivial.
For the induction step, assume the assertion for $k$ and let $\T$ be a theory in-between  $\ha + \neg \neg \LEM{\Pi_{k-1}}$ and $\pa$.
Assume also that $\PNFT{\df{\left(\n{\E_{k'}}\right)}}{ \dn{\Pi_{k'}}}$ holds for all $k' \leq k+1$.
Then, by induction hypothesis, we have
$\T \vdash \neg \neg \DNE{\Sigma_{k-1}}$.
Since $\T$ contains $\ha +\neg \neg 
\LEM{\Pi_{k-1}},$ we have $\T \vdash \neg \neg \LEM{\Sigma_{k-1}}$ by \cite[Theorem 3.1(ii)]{ABHK04}.
Fix an instance of $\neg \neg \DNE{\Sigma_{k}}$
$$
\vp :\equiv  \neg \neg \forall x ( \neg \neg \vp_1(x) \to \vp_1(x)),
$$
where $\vp_1(x) \in \Sigma_{k} (x)$.
Without loss of generality, one can assume that $\vp_1(x)$ does not contain $\lor$ (cf. \cite[Proposition 3.8]{Koh08}).
%By Lemma \ref{lem: basic facts on our classes}, we have
Note $\forall x ( \neg \neg \vp_1(x) \to \vp_1(x)) \in
\df{\U_{k+1}}$, and hence, $\vp \in \df{\left(\n{\E_{k+1}}\right)}$.
%by definition.
By $\PNFT{\df{\left(\n{\E_{k+1}}\right)}}{ \dn{\Pi_{k'}}}$, there exists a sentence $\vp'\in \Pi_{k+1}$ such that
%\begin{equation*}
%\label{eq: T |- vp <-> NNvp'}
$\T \vdash \vp \lr \neg \neg \vp' $.
%\end{equation*}
Since $\pa$ is an extension of $\T$ and $\pa \vdash \vp$, we have $\pa \vdash \vp'$.
By Theorem \ref{thm: conservation results for PNF -> Pi_k+2}, we have $    \ha + \LEM{\Sigma_{k-1}} \vdash \vp'$.
Then, by Lemma \ref{lem: HA+P |- vp => HA+NNP |- NNvp}, we have
\begin{equation*}
    \ha + \neg \neg \LEM{\Sigma_{k-1}} \vdash \neg \neg \vp'.
\end{equation*}
Since $\T $ is an extension of $\ha + \neg \neg \LEM{\Sigma_{k-1}}$,
%contains $\neg \neg \LEM{\Sigma_{k-1}}$,
%by \eqref{eq: T |- vp <-> NNvp'}, 
we have $\T \vdash \vp$.
%Thus we have shown $\T \vdash \neg \neg \DNE{\Sigma_{k}}$.
\end{proof}

All of our characterization results are of the following form: For any theory $\T $ in-between $\ha + {\rm Q}_k$ and $\pa$,
%n extension of $\ha$ and  subtheory of $\pa$.
$\T \vdash {\rm P}_k$ if and only if $\PNFT{\Gamma_{k'}}{\Delta_{k'}}$ holds for all $k' \leq k$, where ${\rm P}_k, {\rm Q}_k$ are logical principles and $\Gamma_{k'}, \Delta_{k'}$
are classes of formulas.
Based on this representation, our results are summarized in
Table \ref{table: PNFT}, where it is also possible to replace $\DNS{\U_k^+}$ by $\DNS{\U_k}$ (see Remark \ref{rem: UkpDNS <-> UkDNS}).
\begin{table}[htbp]
    \centering
$$
    \begin{array}{|c|c|c|c|}
\hline
{\rm P}_k& (\Gamma_k, \Delta_k)&{\rm Q}_k& \\
\hline
\hline
%\vspace*{5pt}
\NN{\DNE{\Sigma_{k-1}}}&\left( \df{\left(\dn{\U_k}\right)}, \dn{\Pi_k}  \right)&\NN{\LEM{\Pi_{k-2}}}&\text{Theorem \ref{thm: characterization of PNFT(NEkdf, NNPk) and PNFT(NNUkdf, NNPk)}}\\[4pt]
\hline
\DNS{\U_k^+} & \left( \dn{\U_k}, \dn{\Pi_k}  \right) &\emptyset & \text{Theorem \ref{thm: characterization of PNFT(NNUkp,NNPk)}}\\[2pt]
 \hline
 \DNE{\Sigma_{k-1}}& \left( \df{\U_k}, \Pi_k  \right) & \LEM{\Pi_{k-2}}&\text{Theorem \ref{thm: Optimality of PNFT for df-formulas}.\eqref{item: Characterization of PNFT(U_k^df, Pik)}}\\[2pt]
 \hline
\DNE{\Sigma_{k-1}}+\DNS{\U_k^+} &\left( \dn{\U_k}, \Pi_k  \right) & \LEM{\Pi_{k-2}}& \text{Theorem \ref{thm: characterization of PNFT(NEk,Pk)}}\\[2pt]
\hline
\DNE{\Sigma_k} & \left(\df{\E_k}, \Sigma_k \right) & \LEM{\Pi_{k-1}}&\text{Theorem \ref{thm: Optimality of PNFT for df-formulas}.\eqref{item: Characterization of PNFT(E_k^df, Sigmak)}}\\[2pt]
\hline
\DNE{\Sigma_k} +\DNS{\U_k^+} & \left(\E_k, \Sigma_k \right) & \LEM{\Pi_{k-1}}&\text{Theorem \ref{thm: characterization of PNFT(Ek,Sk)}}\\[2pt]
\hline
\DNE{(\Pi_k \lor \Pi_k)}  & \left(\U_k, \Pi_k \right) & \emptyset &\text{Theorem \ref{thm: characterization of PNFT(Ukp,Pk)}}\\[2pt]
\hline
\DNE{\Sigma_k}+\DNE{(\Pi_k \lor \Pi_k)}  & \left(\U_k, \Pi_k \right) \& \left(\E_k, \Sigma_k \right) & \emptyset &\text{Corollary \ref{cor: characterization of PNFT(Ek,Sk)+PNFT(Uk,Pk)}}\\[2pt]
\hline
    \end{array}
    $$
    \caption{Characterizations of the prenex normal form theorems}
    \label{table: PNFT}
\end{table}

\section*{Acknowledgements}
The authors thank to Ulrich Kohlenbach for his helpful comments.
The first author was supported by JSPS KAKENHI Grant Numbers JP18K13450 and JP19J01239, and the second author by JP19K14586.

\bibliographystyle{abbrv}
\bibliography{bibliography.bib}

\begin{thebibliography}{1}

\bibitem{ABHK04}
Y.~Akama, S.~Berardi, S.~Hayashi, and U.~Kohlenbach.
\newblock An arithmetical hierarchy of the law of excluded middle and related
  principles.
\newblock In {\em Proceedings of the 19th Annual {IEEE} Symposium on Logic in
  Computer Science ({LICS}'04)}, pages 192--301. 2004.

\bibitem{Fri78}
H.~Friedman.
\newblock Classically and intuitionistically provably recursive functions.
\newblock In G.~H. {M\"u}ller and D.~S. Scott, editors, {\em Higher Set
  Theory}, pages 21--27, Berlin, Heidelberg, 1978. Springer Berlin Heidelberg.

\bibitem{FK18}
M.~Fujiwara and U.~Kohlenbach.
\newblock Interrelation between weak fragments of double negation shift and
  related principles.
\newblock {\em J. Symb. Log.}, 83(3):991--1012, 2018.

\bibitem{FK20-2}
M.~Fujiwara and T.~Kurahashi.
\newblock Conservation results on semi-classical arithmetic.
\newblock {\em preprint}.

\bibitem{Koh08}
U.~Kohlenbach.
\newblock {\em Applied proof theory: proof interpretations and their use in
  mathematics}.
\newblock Springer Monographs in Mathematics. Springer-Verlag, Berlin, 2008.

\bibitem{KS14}
U.~Kohlenbach and P.~Safarik.
\newblock Fluctuations, effective learnability and metastability in analysis.
\newblock {\em Annals of Pure and Applied Logic}, 165(1):266 -- 304, 2014.
\newblock The Constructive in Logic and Applications.

\bibitem{Tro73}
A.~S. Troelstra, editor.
\newblock {\em Metamathematical investigation of intuitionistic arithmetic and
  analysis}, volume 344 of {\em Lecture Notes in Mathematics}.
\newblock Springer-Verlag, Berlin, New York, 1973.

\bibitem{ConstMathI}
A.~S. Troelstra and D.~van Dalen.
\newblock {\em Constructivism in mathematics, An introduction, {V}ol. {I}},
  volume 121 of {\em Studies in Logic and the Foundations of Mathematics}.
\newblock North Holland, Amsterdam, 1988.

\bibitem{vD13}
D.~van Dalen.
\newblock {\em Logic and Structure}.
\newblock Universitext. Springer-Verlag London, fifth edition, 2013.

\end{thebibliography}
\end{document}